\newcommand{\E}{\bm{\mathrm E}}
\newcommand{\Prob}{\bm{\mathrm{P}}}
\newtheorem{theorem}{Theorem}[section]
\newtheorem{corollary}[theorem]{Corollary}
\newtheorem{lemma}[theorem]{Lemma}
\newtheorem{proposition}[theorem]{Proposition}
\newtheorem{remark}[theorem]{Remark}
\numberwithin{equation}{section}
\title{Tests of independence for pairs of paths of non-stationary Gaussian processes}
\author{Philip A. Ernst\footnote{
Department of Mathematics,
Imperial College London,
London SW7 2AZ, UK,
\texttt{p.ernst@imperial.ac.uk}}
\and Frederi G. Viens\footnote{
Department of Statistics,
Rice University,
Houston, TX 77005, US,
\texttt{viens@rice.edu}}
\and Shuo Yan\footnote{
Department of Mathematics,
Imperial College London,
London SW7 2AZ, UK,
\texttt{s.yan@imperial.ac.uk}}}
\date{}
\begin{document}
\maketitle
\begin{abstract}
For a pair of paths of continuous-time stochastic processes, the notion of Pearson correlation $\rho(T)$, over a time interval $[0,T]$, is an easy extension of the same notion of empirical correlation for two time series of fixed length $n$, and indeed they are explicitly tied to each other when observing the stochastic processes at a fixed time frequency (time step $=T/n$). When the processes are self-similar, such as with standard Brownian motion, or fractional Brownian motion (fBm), a simple observation leads to showing that the univariate distribution of $\rho(T)$ is constant as $T$ changes. Therefore, such highly non-stationary processes cannot be treated the same way that i.i.d. data or stationary time series are treated, where in many cases, $\rho(T)$ is known to converge to 0 as $T$ tends to infinity if the stationary time series are independent. The statistician G. Udny Yule had noticed this phenomenon empirically in 1926, giving rise to the notion of ``nonsense correlation’’ for the behavior of $\rho(T)$ which is frequently far from 0, and ``highly dispersed’’ as a distribution, irrespective of the value of $T$.

In this paper, using $T=1$ without loss of generality, we show that a notion of convergence does hold, can be analyzed in detail, and even gives rise to procedures for testing the independence of two paths of standard or fractional Brownian motions. This is the convergence to $\rho = \rho(1)$ of its discrete-observation version $\rho_n$, as the observation frequency $n$ tends to infinity. Using explicit computations based on characteristic functions of the trivariate elements that enter the expressions for the ratios defining $\rho$ and $\rho_n$, we show that $\rho_n - \rho$ converges to 0 at the rate $1/n$, and that the fluctuations of $n(\rho_n - \rho)$ are asymptotically non-normal. Based on a fully explicit representation, we show that the limit in distribution of $n(\rho_n-\rho)$, conditionally on the paths, is a normal distribution with mean and variance being non-linear functionals of the paths.

Because of this, the testing procedure which arises from our limit theorem can be understood as a procedure based on ordinary Gaussian quantiles, where the asymptotic mean is non-zero and is explicitly computable from the data, and where the asymptotic covariance is also explicitly computable from the data. Perhaps most notably from an applicability perspective, the rate of scaling in the conditionally normal convergence is $n^{-1}$ rather than the usual $n^{-1/2}$, opening a path to significantly greater power in testing procedures compared to those based on ordinary central limit theorems.\\
\indent \textbf{Keywords:} Inference in infinite dimensions,
correlation of paths,
conditionally normal fluctuations,
Yule’s nonsense correlation,
stochastic processes,
second Wiener chaos,
self-similarity,
random walk\\
\indent \textbf{Mathematics Subject Classification:} 60B10,
60H05,
62F03,
62H20 
    
\end{abstract}
\section{Introduction}

For a pair of paths of real-valued stochastic processes $X_1(t)$ and $X_2(t)$ over the time interval $[0,T]$, the empirical Pearson correlation is defined as
\begin{equation}
    \rho(T)=\frac{\int_0^TX_1(t)X_2(t)dt-\int_0^TX_1(t)dt\int_0^TX_2(t)dt}{\sqrt{\int_0^TX_1(t)^2dt-\left(\int_0^TX_1(t)dt\right)^2}\cdot\sqrt{\int_0^TX_2(t)^2dt-\left(\int_0^TX_2(t)dt\right)^2}}.
\end{equation}
This definition is motivated by this expression being the limit of its discrete (Riemann sum) version, which coincides with the classical Pearson correlation for the paths $X_1,X_2$ observed in discrete time. See expression \eqref{rhon} further below. In practice, this metric is often used to determine whether there is linear dependence between two time series, which is suitable for many stationary processes.
See \cite{MR418379,MR845894} and reference therein for treatment for univariate time series, \cite{MR1463322,MR2118942} for multivariate ARMA time series, and \cite{DPhEV,DEV,MR4815990} for a study with stochastic processes in continuous time.
However, it was observed by Yule \cite{5c7b6f25-ed11-3745-8118-935d66a8f3d3} in 1926 that the distribution of $\rho(T)$ is highly dispersed for certain types of non-stationary stochastic processes such as standard Brownian motions (Wiener processes), even for large $T$, and assessing (in)dependence based solely on $\rho(T)$ is methodologically unsound.
Still, this observation did not draw much attention from domain specialists until 1986, when the surprisingly frequent occurrence of ``spurious regression'' in applied econometric literature was discovered and confirmed (\cite{10.1214/ss/1177009870,GRANGER1974111,https://doi.org/10.1111/j.1468-0084.1986.mp48003001.x,MR867979,}).
Nevertheless, even recently, such erroneous tests based on the empirical Pearson correlation have still been widely adopted. 

This methodological error potentially leads to serious consequences: a large empirical correlation is detected for independent highly non stationary time series, and an association, or even an attribution, is declared, the independence notwithstanding. A possible example of this phenomenon can be inferred from the reference to paleoclimatology in \cite{mcshane}. That particular example points to how dangerous these methodological errors can be, when they call into question the conclusions by scientists studying problems of global importance. This serious problem is indeed one major motivation for us to study $\rho(T)$.\\ 
\indent In the current work, we provide theoretical results for testing (in)dependence between pairs of paths of highly non-stationary stochastic processes, in the most fundamental case - two jointly Gaussian processes, using the empirical Pearson correlation coefficient $\rho(T)$. A ``common-sense" idea is, instead of using the processes themselves, to compute the empirical correlation using the increments of the processes, which works in the case of Brownian motion, or more generally in the case of L\'evy processes, since the increments are stationary and independent.
However, this idea is limited to particular situations and does not hold in general. 
For processes with long memory such as fractional Brownian motion (fBm), passing to its increment process (fractional Gaussian
noise, or fGn), which is still highly correlated in time, may present no mathematical advantage over a path-based test, even if the data are sampled from fBm and fGn perfectly.
Moreover, using increments would defeat the purpose of trying to exploit the fact that data is presented to us as a time series or process path. 
Single-path data, a common occurrence in longitudinal studies, is rarely produced from an independent sampling procedure. 

This issue with fGn illustrates a broader point. The use of an entire path for testing, rather than its increments process, bases decisions on the path's macroscopic properties, rather than its short-time-scale regularity and correlation behavior, or its asymptotic long memory behavior. The latter high-frequency and asymptotic properties are highly prone to model mis-specification, making any increments-based testing procedure unworkable. The risk of model mis-specification based on the global properties of an entire non-stationary path, such as the path's self-similarity, which holds for fBm, is presumably far lower.    

A related area to our work is the study of distance correlation, which, along with the classical Pearson correlation, is a special case of a general notion of covariance with respect to stochastic processes introduced in \cite{MR2752127}.
It serves as a natural alternative of Pearson correlation between two real-valued random variables, as it allows for arbitrary dimensions and measures all forms of dependence rather than just linear dependence. The concept was first introduced by \cite{MR4205649} for univariate random variables, and by \cite{MR2298886} and \cite{MR2382665} for multivariate random variables.
Since then, distance correlation has been further explored in a series of papers. See, for instance, \cite{MR4319239,MR2752127, MR2979766, MR3269983, MR3053543}. 

The notion of distance correlation can be generalized to metric space \cite{MR3127883} (cf. \cite{MR3127866}), allowing, in particular, the consideration of stochastic processes, which has also been extensively studied recently. 
For details, see \cite{MR4140528,MR2752129,MR3745391} and reference therein.
For example, the authors of \cite{MR4140528} considered distance correlation of discretized stochastically continuous and bounded stochastic processes on the unit interval.

Another research direction in distance correlation focuses on stationary time series. For the case of univariate time series, we refer the reader to \cite{MR3635048,MR2915095}; for the case of multivariate time series, we refer the reader to \cite{MR3804406}.
In the above works, auto-distance correlation functions (ADCF) are used to test serial dependence.
Furthermore, in \cite{betken2024testindependencelongrangedependent,MR3779711}, the authors used auto-distance correlation functions and cross-distance correlation functions (CDCF) to test independence between two stationary time series.
Note that, in contrast with the distance correlation defined on the metric space of processes, as described in the preceding paragraph, the ADCF and the CDCF are defined in the usual Euclidean space of the processes observed at points.
Another popular option for testing independence between two time series is to use a Reproducing Kernel Hilbert Space based statistic (\cite{NIPS2007_d5cfead9,MR3127866,MR4270386}).

The advantage of distance correlation over Pearson correlation is that it can detect more than linear dependence. 
However, the distance correlation defined between two stochastic processes is not applicable to testing if given only two observed paths, and the ADCF and the CDCF presume the stationarity of the processes.
The focus of the current work is to measure dependence between two jointly Gaussian processes, in which case, linear independence is equivalent to independence. 
It is therefore reasonable to remain within the scope of Pearson correlation for simplicity.
Indeed, it turns out that $\rho(T)$ converges to the true correlation as the time horizon $T$ tends to infinity, in the case of stationary processes with good mixing properties, for example, AR$(1)$ processes, as shown in \cite{MR3526245} (see Theorem 8.3.1).
However, it is noted in \cite{MR3670196} that $\rho(T)$ has a constant non-trivial distribution rather than converge to  zero as $T\to\infty$ for independent standard Brownian motions (see \cite{MR4815990} for details on this distribution and \cite{MR4594215} for a parallel result in the discrete case; also see \cite{DEV} where the same constant-distribution property is noted for fBm and for all self-similar processes). 
Therefore, the Pearson correlation $\rho(T)$ itself is not sufficient for a hypothesis test for independence in this case.
To tackle this scenario, unlike the results in the papers we mentioned above, we will propose a new methodology based on the discrete version of empirical Pearson correlation to test for (in)dependence between common non-stationary Gaussian processes. Our methodology is based on remarkable new mathematical properties for the fluctuations of the difference between $\rho(T)$ and its discretized version $\rho_n(T)$ defined below in \eqref{rhon};  we establish these results with $T=1$ without loss of generality.

More specifically, we are interested in the time-discretization asymptotics for empirical Pearson correlation of two most commonly studied non-stationary Gaussian processes - standard Brownian motion and fractional Brownian motion (fBm). In addition to non-stationarity, they are both self-similar processes with continuous paths and stationary increments. Thus, as mentioned, and as noted in \cite{DEV}, $\rho(T)$ has a constant distribution regardless of the time horizon $T$, justifying the use of $T=1$, and confirming that there is no asymptotic distribution theory that can be developed directly from $\rho(T)$ as $T$ gets large. This is all consistent with the original observation by G. Udny Yule, that $\rho(T)$ is heavily dispersed even for large $T$, and we now know that this is mathematically verified in both the standard and fBm cases.

Despite this perfect parallel in both the standard and fBm cases, with increments which are stationary in both cases, a key difference between standard Brownian motion and fBm is that, the increments are independent in the former case, while they are highly correlated in the fBm case. In fact, fBm with Hurst parameter $H>1/2$ is commonly used as an archetype of a time series exhibiting long-range dependence. This typically creates many technical difficulties in the analysis of fBm compared to standard Brownian motion. This is because of the severe lack of the Markov property for fBm, which cannot be resolved using any multivariate considerations, and is also because there are sometimes insurmountable technicalities in trying to immerse fBm into a martingale framework. However in this paper, we show that our mathematical analysis techniques can handle both cases in rather explicit fashion, though not simultaneously, as a separate analysis is needed for fBm. 
 
As we said, we are now allowed to focus on the study of $\rho(1)$ to understand general situations.
As a discrete approximation of $\rho:=\rho(1)$, we define $\rho_n$ as
\begin{equation}
\label{rhon}
    \frac{\frac{1}{n}\sum_{k=0}^{n-1}X_1(k/n)X_2(k/n)-\frac{1}{n}\sum_{k=0}^{n-1}X_1(k/n)\cdot \frac{1}{n}\sum_{k=0}^{n-1}X_2(k/n)}{\sqrt{\frac{1}{n}\sum_{k=0}^{n-1}X_1(k/n)^2-\left(\frac{1}{n}\sum_{k=0}^{n-1}X_1(k/n)\right)^2}\cdot\sqrt{\frac{1}{n}\sum_{k=0}^{n-1}X_2(k/n)^2-\left(\frac{1}{n}\sum_{k=0}^{n-1}X_2(k/n)\right)^2}}.
\end{equation} As mentioned, this $\rho_n$ converges to $\rho$ as $n\to\infty$. While not the purpose of this paper, one can prove rather easily, using a coupling which was introduced in \cite{MR4594215}, that the convergence holds almost surely and in $L^1(\Omega)$.  
Remarkably, it turns out that the convergence can be used to reveal (in)dependence relations between two observed processes, by delving explicitly into the  nature and quantification of the fluctuations of $\rho_n-\rho$, which we establish in this article, and which can be interpreted as non-Gaussian, and as conditionally Gaussian.
Even more remarkably, unlike any standard central limit theorem, we will show that the rate of this convergence is of order $1/n$, thus that the fluctuations of interest are those of $n(\rho_n-\rho)$. That $1/n$ is the correct scaling to find the fluctuations of $\rho_n-\rho$ was already known from the moment calculations in Section 5 of \cite{MR4594215}. Herein, we show that $n(\rho_n-\rho)$ has a limit in law which can be described explicitly in two equivalent ways; a non-Gaussian law in the direct product of the first and second Wiener chaos associated with a product probability space between an exogenous Wiener space and the Gaussian space of the original paths $X_1,X_2$, where the part involving the exogenous Wiener process requires modulation by a non-linear term coming from a delta method, where the exogenous process does not appear; or as a conditionally Gaussian law, conditional on the paths $X_1,X_2$, where the Gaussian character comes only from the aforementioned exogenous Wiener process, and all other terms, including the modulating non-linearity, are functionals of the original paths $X_1,X_2$ only. Finally and perhaps most importantly in practice, we show that the asymptotics of $n(\rho_n-\rho)$ make for an implementable hypothesis test in practice, where the mean and covariance structure of the conditional Gaussian limit, on which the test is based, depend on the actual paths $X_1,X_2$, not merely on the law of that pair of paths. With an attention to potential practical implementation, we also show that our method can be applied to design a hypothesis test based only on discrete-time observations of $X_1,X_2$, for example, by observing $n(\rho_n-\rho_{2n})$.

The structure of the article is as follows. In the remainder of this section, we introduce notation, formulate the main results, and discuss hypothesis testing in practice. In Section \ref{sec:sbm}, we prove the results for standard Brownian motion. In Section \ref{sec:fbm}, we prove the results for fractional Brownian motion with Hurst parameter $H>1/2$. In Section \ref{sec:numerics}, we present numerical simulations.
\subsection{Notation and main results}
Suppose that we have two stochastic processes $X_1$ and $X_2$.
Define $F(a,b,c)=a/\sqrt{bc}$, and functionals on the space of $\mathbf C([0,1])$ and $\mathbf C([0,1])^2$:
\begin{equation}
\label{def:functionals}
    \begin{aligned}
        D(f)&=\int_0^1 f(t)^2dt-\left(\int_0^1f(t)dt\right)^2,\\
        D_n(f)&=\frac{1}{n}\sum_{k=0}^{n-1}f(k/n)^2-\left(\frac{1}{n}\sum_{k=0}^{n-1} f(k/n)\right)^2,\\
        A(f,g)&=\int_0^1 f(t)g(t)dt-\left(\int_0^1f(t)dt\right)\left(\int_0^1g(t)dt\right),\\
        A_n(f,g)&=\frac{1}{n}\sum_{k=1}^nf(k/n)g(k/n)-\frac{1}{n^2}\sum_{k=1}^nf(k/n)\sum_{k=1}^ng(k/n).
    \end{aligned}
\end{equation}
Then, the empirical variance, covariance, and correlation coefficient of $X_1$ and $X_2$ can be represented in the following way.
\begin{equation}
\label{def:empirical}
\begin{aligned}
    \text{Variance - continuous: }\mathcal X^{i,i}=D(X_i),\quad &\text{Variance - discrete: }\mathcal X_{n}^{i,i}=D_n(X_i),\quad i\in\{1, 2\},\\
    \text{Covariance - continuous: }\mathcal X^{1,2}=A(X_1,X_2),\quad &\text{Covariance - discrete: }\mathcal X^{1,2}_n=A_n(X_1,X_2),\\
    \text{Correlation - continuous: }\rho=F(\mathcal X^{1,2},\mathcal X^{1,1},\mathcal X^{2,2}),\quad &\text{Correlation - discrete: }\rho_n=F(\mathcal X_n^{1,2},\mathcal X_n^{1,1},\mathcal X_n^{2,2}).
\end{aligned}
\end{equation}
Define the following functions, with $r\in[-1,1]$ and $H\in(1/2,1)$:
\begin{equation}
\label{def:musigma}
    \begin{aligned}
        \bm\mu(f,g)&=\left(\frac{1}{2}f(1)\bar g+\frac{1}{2}g(1)\bar f-\frac{1}{2}f(1)g(1),f(1)\bar f-\frac{1}{2}f(1)^2,g(1)\bar g-\frac{1}{2}g(1)^2\right)^{\mathrm{T}},\\
        \mu(f,g)&=\nabla F(A(f,g),D(f),D(g))\cdot\bm\mu(f,g),\\
        \sigma^r(f,g)&=\frac{1}{\sqrt{12}}\cdot\frac{\sqrt{(D(f)D(g)-A(f,g)^2)(D(f)+D(g)-2rA(f,g))}}{D(f)D(g)},\\
        \sigma^{r,H}(f,g)&=\sigma_H\cdot\frac{\sqrt{(D(f)D(g)-A(f,g)^2)(D(f)+D(g)-2rA(f,g))}}{D(f)D(g)},
    \end{aligned}
\end{equation}where $\bar f=\int_0^1 f(t)dt$, $\bar g=\int_0^1g(t)dt$, and $\sigma_H$ is a constant that depends on $H$ only. For each $H>1/2$, $\sigma_H$ can be computed as an explicit integral (see Lemma~\ref{lem:variance} and Equation \eqref{def:sigmaH}).
We are ready to formulate the present paper's two key results as Theorem \ref{sthm:cov} and Theorem \ref{fthm:cov} below.
\begin{theorem}
    \label{sthm:cov}
    Let $\mathcal W_1$ and $\mathcal W_2$ be two standard Brownian motions that are jointly Gaussian with constant correlation coefficient $r$.
    Let $\mathcal X^{i,j}$ and $\mathcal X^{i,j}_n$ for $i,j\in\{1,2\}$ and corresponding $\rho$ and $\rho_n$ be defined as in \eqref{def:empirical} with $X_i=\mathcal W_i$ for $i\in\{1,2\}$.
    Let $\mathcal U=(\mathcal X^{1,2},\mathcal X^{1,1},\mathcal X^{2,2})^{\mathrm{T}}$ and $\mathcal U_n=(\mathcal X^{1,2}_n,\mathcal X^{1,1}_n,\mathcal X^{2,2}_n)^{\mathrm{T}}$. Then, the following results hold.
    \begin{enumerate}
        \item There exists a decomposition for difference of empirical covariance 
    $$n(\mathcal U_n-\mathcal U)=\bm\mu(\mathcal W_1,\mathcal W_2)+\mathcal Z_n,$$
    and $(\mathcal Z_n,\mathcal W_1,\mathcal W_2)\to (\mathcal Z,\mathcal W_1,\mathcal W_2)$ in distribution, where, conditional on $(\mathcal W_1,\mathcal W_2)$, $\mathcal Z$ is normally distributed with zero mean.
    \item There exists a decomposition for difference of empirical correlation
    \begin{equation}
        n(\rho_n-\rho)=\mu(\mathcal W_1,\mathcal W_2)+\mathcal Y_n,
    \end{equation}and $(\mathcal Y_n,\mathcal W_1,\mathcal W_2)\to (\mathcal Y,\mathcal W_1,\mathcal W_2)$ in distribution, where, conditional on $(\mathcal W_1,\mathcal W_2)$, $\mathcal Y$ is normally distributed with zero mean.
    \item Moreover, as $n\to\infty$,
    \begin{equation}
        \frac{n(\rho_n-\rho)-\mu(\mathcal W_1,\mathcal W_2)}{\sigma^r(\mathcal W_1,\mathcal W_2)}\to\mathcal N(0,1)\quad\text{in distribution.}
    \end{equation}
    \end{enumerate}
\end{theorem}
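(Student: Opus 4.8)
The plan is to deduce the third statement from the second, for which the only genuinely new ingredient is the identification of the conditional variance of the limit $\mathcal Y$ with $\sigma^r(\mathcal W_1,\mathcal W_2)^2$. By part 2, $(\mathcal Y_n,\mathcal W_1,\mathcal W_2)\to(\mathcal Y,\mathcal W_1,\mathcal W_2)$ in law with $\mathcal Y$ conditionally centered Gaussian given the paths. The delta method underlying part 2 gives the explicit representation $\mathcal Y=\nabla F(\mathcal U)\cdot\mathcal Z$, where $\mathcal Z$ is the conditionally Gaussian limit of part 1: indeed $n(\rho_n-\rho)=\nabla F(\mathcal U)\cdot n(\mathcal U_n-\mathcal U)+o(1)$ since $\mathcal U_n-\mathcal U=O(1/n)$ kills the higher-order Taylor terms, and $\nabla F(\mathcal U)\cdot\bm\mu(\mathcal W_1,\mathcal W_2)=\mu(\mathcal W_1,\mathcal W_2)$. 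Hence, conditionally on $(\mathcal W_1,\mathcal W_2)$, $\mathcal Y$ is centered Gaussian with some variance $v(\mathcal W_1,\mathcal W_2)$, and once I show $v=\sigma^r(\mathcal W_1,\mathcal W_2)^2$, the ratio $\mathcal Y/\sigma^r(\mathcal W_1,\mathcal W_2)$ is conditionally, hence unconditionally, $\mathcal N(0,1)$. Since $\sigma^r$ is a continuous and (as noted below) almost surely strictly positive functional of the paths, the joint convergence of part 2 combined with Slutsky's theorem then upgrades this to the stated convergence of $\bigl(n(\rho_n-\rho)-\mu(\mathcal W_1,\mathcal W_2)\bigr)/\sigma^r(\mathcal W_1,\mathcal W_2)$.

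To compute the conditional variance I would first pin $\mathcal Z$ down as a stochastic integral. Writing the Riemann-sum error exactly, applying It\^o's formula to isolate the martingale part of $\mathcal W_i\mathcal W_j$, and using stochastic Fubini, the fluctuating parts of the three coordinates of $n(\mathcal U_n-\mathcal U)$ take the form $\int_0^1\bigl(\phi_n(s)-\tfrac12\bigr)\,dN^{(i,j)}(s)$, where $\phi_n(s)=\lceil ns\rceil-ns$ is the sawtooth of local mean $\tfrac12$, and where, with $u=\bar{\mathcal W}_1-\mathcal W_1$ and $v=\bar{\mathcal W}_2-\mathcal W_2$,
\begin{equation*}
dN^{(1,2)}=u\,d\mathcal W_2+v\,d\mathcal W_1,\qquad dN^{(1,1)}=2u\,d\mathcal W_1,\qquad dN^{(2,2)}=2v\,d\mathcal W_2.
\end{equation*}
The drift parts of $\mathcal W_i\mathcal W_j$, together with the $\tfrac12$-average of $\phi_n$, are precisely what is absorbed into the deterministic-given-path term $\bm\mu(\mathcal W_1,\mathcal W_2)$. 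The centered sawtooth $\phi_n-\tfrac12$ has local mean zero and local mean-square $\int_0^1(x-\tfrac12)^2\,dx=\tfrac1{12}$, which is the source of the $1/\sqrt{12}$ in $\sigma^r$. Consequently the conditional covariance of $\mathcal Z$ given $(\mathcal W_1,\mathcal W_2)$ equals $\tfrac1{12}$ times the matrix of cross-variations $\langle N^{(i,j)},N^{(k,l)}\rangle_1$, and by linearity $\mathcal Y=\nabla F(\mathcal U)\cdot\mathcal Z$ is conditionally centered Gaussian with variance $\tfrac1{12}\langle\widetilde N\rangle_1$, where $\widetilde N=\nabla F(\mathcal U)\cdot(N^{(1,2)},N^{(1,1)},N^{(2,2)})^{\mathrm T}$.

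The remaining step is algebraic. Abbreviating $a=A(\mathcal W_1,\mathcal W_2)$, $b=D(\mathcal W_1)$, $c=D(\mathcal W_2)$, the components of $\nabla F(\mathcal U)$ are $\bigl(\tfrac1{\sqrt{bc}},-\tfrac{a}{2b\sqrt{bc}},-\tfrac{a}{2c\sqrt{bc}}\bigr)$, so that
\begin{equation*}
d\widetilde N=\frac1{\sqrt{bc}}\left[\left(v-\tfrac{a}{b}u\right)d\mathcal W_1+\left(u-\tfrac{a}{c}v\right)d\mathcal W_2\right].
\end{equation*}
Using $d\langle\mathcal W_1\rangle=d\langle\mathcal W_2\rangle=dt$ and $d\langle\mathcal W_1,\mathcal W_2\rangle=r\,dt$, together with the identities $\int_0^1u^2=b$, $\int_0^1v^2=c$, $\int_0^1uv=a$ (which follow from $\int_0^1(f-\bar f)(g-\bar g)=A(f,g)$), I would compute $\langle\widetilde N\rangle_1=\tfrac1{bc}\int_0^1(P^2+Q^2+2rPQ)\,dt$ with $P=v-\tfrac{a}{b}u$ and $Q=u-\tfrac{a}{c}v$, obtaining $\int P^2=c-\tfrac{a^2}{b}$, $\int Q^2=b-\tfrac{a^2}{c}$ and $\int PQ=\tfrac{a^3}{bc}-a$, whence $\int(P^2+Q^2+2rPQ)=\tfrac1{bc}(bc-a^2)(b+c-2ra)$. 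This gives $\langle\widetilde N\rangle_1=\tfrac{(bc-a^2)(b+c-2ra)}{(bc)^2}$ and therefore $\tfrac1{12}\langle\widetilde N\rangle_1=\sigma^r(\mathcal W_1,\mathcal W_2)^2$, as required.

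The conceptually hard part lies not in this plan but upstream: establishing the stable, conditionally Gaussian convergence of the oscillatory stochastic integrals $\int_0^1(\phi_n-\tfrac12)\,dN^{(i,j)}$ jointly with the paths, where the asymptotic mixing is carried by an exogenous Wiener space so that the limit is conditionally independent of $(\mathcal W_1,\mathcal W_2)$. This is exactly what parts 1 and 2 supply and what I am entitled to assume here; given those, part 3 reduces to the variance identification above followed by a routine Slutsky argument. The only analytic point left to verify is that $\sigma^r(\mathcal W_1,\mathcal W_2)>0$ almost surely, legitimizing the division: this holds because $bc-a^2>0$ a.s. by the strict Cauchy--Schwarz inequality (the centered paths are a.s. not proportional), while for $|r|<1$ one has $b+c-2ra\ge(\sqrt b-\sqrt c)^2+2(1-|r|)\sqrt{bc}>0$ a.s., using $b,c>0$ a.s.
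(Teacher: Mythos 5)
Your part-3 argument is sound, and for that step it is essentially the paper's own: the identification of the conditional variance is exactly the verification \eqref{eq:verifyf} that Proposition~\ref{sprop:conj7} states and omits as ``a simple matrix multiplication,'' and your final step (conditional normality of $\mathcal Y/\sigma^r(\mathcal W_1,\mathcal W_2)$ passed to an unconditional limit) is the paper's Corollary~\ref{scor:unconditional}, done there by conditional characteristic functions rather than by continuous mapping, but equivalently. Your organization of the algebra is in fact cleaner than a brute-force matrix product: forming the scalar combination $\widetilde N$ and computing $\tfrac1{12}\langle\widetilde N\rangle_1$ from $\int_0^1u^2=b$, $\int_0^1v^2=c$, $\int_0^1uv=a$ is correct, and your conditional covariance $\tfrac1{12}\langle N^{(i,j)},N^{(k,l)}\rangle_1$ reproduces the paper's $\Sigma_r$ in Proposition~\ref{sprop:conj6} entry by entry. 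The a.s.\ positivity of $\sigma^r$ for $|r|<1$ is also correctly argued.

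The genuine gap is that the theorem has three parts and you prove only the third, explicitly assuming the first two. Parts 1 and 2 are the substance of Section~\ref{sec:sbm}: the exact representation of $n(\mathcal U_n-\mathcal U)$ as second-chaos integrals against the centered sawtooth (equations \eqref{eq:s1}--\eqref{eq:s2}, including the $O(1/n)$ remainder terms $E_n^{i,j}$ that exact bookkeeping produces and that your sketch drops); the proof that the sawtooth integrals $\mathcal P_n,\mathcal Q_n,R_n,S_n$ converge jointly with the paths to $\tfrac1{\sqrt{12}}$ times \emph{exogenous} Brownian motions (Lemma~\ref{lem:weak_conv1}, a Gaussian covariance computation plus tightness); the transfer of that convergence to the stochastic integrals $\int_0^1(1-t)\mathcal P_n(t)\,d\mathcal W(t)$ and their backward-time analogues, for which the paper invokes Theorem~7.10 of \cite{Kurtz1996}; the reduction of correlated to independent drivers via the matrix $A_r$ of \eqref{eq:slinearity}; and the delta-method remainder control in Proposition~\ref{sprop:conj7}. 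On this last point, your one-line justification --- that $\mathcal U_n-\mathcal U=O(1/n)$ ``kills the higher-order Taylor terms'' --- would not survive scrutiny on its own: $\nabla F$ is not uniformly continuous near the degenerate set $\{xyz=0\}$ and $\mathcal U$ is random, so one needs the localization argument (the set $L$ and the choices of $M$, $\delta$, $N$) that the paper supplies. Note also that part 3 cannot be deduced from the \emph{statements} of parts 1 and 2 alone, since those assert only that $\mathcal Y$ is conditionally centered Gaussian without specifying its conditional variance; your argument necessarily uses the explicit representations of $\mathcal Z$ and $\mathcal Y=\nabla F(\mathcal U)\cdot\mathcal Z$ produced inside the proofs of parts 1 and 2 --- precisely the material you declare yourself entitled to assume. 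As a completion of the final step, the proposal is correct and matches the paper; as a proof of Theorem~\ref{sthm:cov}, it is missing the core.
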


\begin{theorem}
    \label{fthm:cov}
    Let $\mathcal B^H_1$ and $\mathcal B^H_2$ be two fractional Brownian motions with Hurst parameter $H>1/2$, which are jointly Gaussian with constant correlation coefficient $r$.
    Let $\mathcal X^{i,j}$ and $\mathcal X^{i,j}_n$ for $i,j\in\{1,2\}$ and corresponding $\rho$ and $\rho_n$ be defined as in \eqref{def:empirical} with $X_i=\mathcal B^H_i$ for $i\in\{1,2\}$.
    Let $\mathcal U=(\mathcal X^{1,2},\mathcal X^{1,1},\mathcal X^{2,2})^{\mathrm{T}}$ and $\mathcal U_n=(\mathcal X^{1,2}_n,\mathcal X^{1,1}_n,\mathcal X^{2,2}_n)^{\mathrm{T}}$. Then, the following results hold.
    \begin{enumerate}
        \item There exists a decomposition for difference of empirical covariance 
    $$n(\mathcal U_n-\mathcal U)=\bm\mu(\mathcal B^H_1,\mathcal B^H_2)+n^{\frac{1}{2}-H}\mathcal Z_n^H,$$
    and $(\mathcal Z_n^H,\mathcal B^H_1,\mathcal B^H_2)\to (\mathcal Z^H,\mathcal B^H_1,\mathcal B^H_2)$ in distribution, where, conditional on $(\mathcal B^H_1,\mathcal B^H_2)$, $\mathcal Z^H$ is normally distributed with zero mean.
    \item There exists a decomposition for difference of empirical correlation
    \begin{equation}
        n(\rho_n-\rho)=\mu(\mathcal B^H_1,\mathcal B^H_2)+n^{\frac{1}{2}-H}\mathcal Y_n^H,
    \end{equation}and $(\mathcal Y_n^H,\mathcal B^H_1,\mathcal B^H_2)\to (\mathcal Y^H,\mathcal B^H_1,\mathcal B^H_2)$ in distribution, where, conditional on $(\mathcal B^H_1,\mathcal B^H_2)$, $\mathcal Y^H$ is normally distributed with zero mean.
    \item Moreover, as $n\to\infty$,
    \begin{equation}
        \frac{n^{H+\frac{1}{2}}(\rho_n-\rho)-n^{H-\frac{1}{2}}\mu(\mathcal B^H_1,\mathcal B^H_2)}{\sigma^{r,H}(\mathcal B^H_1,\mathcal B^H_2)}\to\mathcal N(0,1)\quad\text{in distribution.}
    \end{equation}
    \end{enumerate}
\end{theorem}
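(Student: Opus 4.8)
The plan is to prove the three assertions in the order (1), (2), (3), with all the genuinely new probabilistic content in (1); assertions (2) and (3) then follow by a delta-method argument and a continuous-mapping step. Assertion (3) is pure algebra once (2) is known: multiplying $n(\rho_n-\rho)=\mu(\mathcal B^H_1,\mathcal B^H_2)+n^{1/2-H}\mathcal Y_n^H$ by $n^{H-1/2}$ gives $n^{H+1/2}(\rho_n-\rho)-n^{H-1/2}\mu(\mathcal B^H_1,\mathcal B^H_2)=\mathcal Y_n^H$. Since by (2) the triple $(\mathcal Y_n^H,\mathcal B^H_1,\mathcal B^H_2)$ converges in law to $(\mathcal Y^H,\mathcal B^H_1,\mathcal B^H_2)$, and $\sigma^{r,H}(\mathcal B^H_1,\mathcal B^H_2)$ is an a.s.\ strictly positive continuous functional of the limiting paths (positivity from the strict Cauchy--Schwarz bound $D(f)D(g)>A(f,g)^2$ and $D(f)+D(g)>2rA(f,g)$ a.s.), the continuous-mapping theorem yields $\mathcal Y_n^H/\sigma^{r,H}(\mathcal B^H_1,\mathcal B^H_2)\to\mathcal Y^H/\sigma^{r,H}(\mathcal B^H_1,\mathcal B^H_2)$. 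Because $\mathcal Y^H$ is, conditionally on the paths, centered Gaussian with conditional variance exactly $\sigma^{r,H}(\mathcal B^H_1,\mathcal B^H_2)^2$ (an identification made during the proof of (2)), this ratio is $\mathcal N(0,1)$ conditionally, hence $\mathcal N(0,1)$ unconditionally. To reduce (2) to (1), I would Taylor-expand $F(a,b,c)=a/\sqrt{bc}$ around $\mathcal U$: on the a.s.\ positive-variance event where $F$ is smooth, $\rho_n-\rho=\nabla F(\mathcal U)\cdot(\mathcal U_n-\mathcal U)+R_n$ with $|R_n|\le C\|\mathcal U_n-\mathcal U\|^2$. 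Multiplying by $n$ and inserting (1), $n(\rho_n-\rho)=\nabla F(\mathcal U)\cdot\bm\mu(\mathcal B^H_1,\mathcal B^H_2)+n^{1/2-H}\nabla F(\mathcal U)\cdot\mathcal Z_n^H+nR_n$; the first term is $\mu(\mathcal B^H_1,\mathcal B^H_2)$ by definition, and since $\|\mathcal U_n-\mathcal U\|=O(n^{-1})$ we have $nR_n=O(n^{-1})=o(n^{1/2-H})$ for $H<1$. Thus $\mathcal Y_n^H=\nabla F(\mathcal U)\cdot\mathcal Z_n^H+o(1)$, and computing $\nabla F(\mathcal U)\,\mathrm{Cov}(\mathcal Z^H\mid\text{paths})\,\nabla F(\mathcal U)^{\mathrm T}$ produces the explicit conditional variance $\sigma^{r,H}(\mathcal B^H_1,\mathcal B^H_2)^2$.

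The heart of the matter is (1). For a scalar coordinate, say $A_n(f,g)-A(f,g)$ with $f=\mathcal B^H_1,g=\mathcal B^H_2$, I would expand the Riemann-sum error cell by cell via $f(t)g(t)-f(k/n)g(k/n)=f(k/n)(g(t)-g(k/n))+g(k/n)(f(t)-f(k/n))+(f(t)-f(k/n))(g(t)-g(k/n))$, and likewise for the product-of-means term. Three scales emerge. The leading pathwise contribution is an Euler--Maclaurin boundary part together with a Young-integral part: since $f,g$ are $H$-Hölder with $H>1/2$, forward Riemann sums of $\int f\,dg$ converge pathwise, and the telescoping identity $\sum_k f(k/n)\big(f((k+1)/n)-f(k/n)\big)=\tfrac12 f(1)^2-\tfrac12\sum_k\big(f((k+1)/n)-f(k/n)\big)^2$ together with its bilinear analogue produces, after multiplication by $n$ and using $f(0)=g(0)=0$, exactly the coordinates of $\bm\mu(f,g)$. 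The quadratic-variation remainder $\sum_k(f((k+1)/n)-f(k/n))^2$ is of order $n^{1-2H}=o(n^{1/2-H})$ for $H>1/2$, hence negligible at the fluctuation scale. The surviving fluctuation is carried by the within-cell bridge deviations $\eta_k:=\int_{k/n}^{(k+1)/n}(f(t)-f(k/n))\,dt-\tfrac1{2n}\big(f((k+1)/n)-f(k/n)\big)$ (the trapezoidal error), weighted by the macroscopic values $f(k/n)$; a scaling count gives $\eta_k=O(n^{-1-H})$ and $n\sum_k f(k/n)\eta_k=O(n^{1/2-H})$, which fixes the normalization $\mathcal Z_n^H=n^{H-1/2}\big(n(\mathcal U_n-\mathcal U)-\bm\mu\big)$.

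It then remains to prove the joint convergence $(\mathcal Z_n^H,\mathcal B^H_1,\mathcal B^H_2)\to(\mathcal Z^H,\mathcal B^H_1,\mathcal B^H_2)$ with $\mathcal Z^H$ conditionally centered Gaussian. Setting $\zeta_k:=n^{1+H}\eta_k$, which is bounded in $L^2$, each coordinate of $\mathcal Z_n^H$ is, up to the negligible terms above, of the form $n^{-1/2}\sum_k f(k/n)\zeta_k$: a weighted partial sum of the microscopic bridge field, conditional on the macroscopic path. I would establish the limit through the characteristic function of $\mathcal Z_n^H$ augmented by finitely many linear path functionals, showing $\E\big[\exp(i\langle\theta,\mathcal Z_n^H\rangle+i\langle\lambda,\text{path}\rangle)\big]\to\E\big[\exp(i\langle\lambda,\text{path}\rangle)\exp(-\tfrac12\theta^{\mathrm T}\Sigma(\mathcal B^H_1,\mathcal B^H_2)\theta)\big]$. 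The key fact is that the rescaled bridge field $\zeta_\cdot$ converges to an exogenous Gaussian field \emph{independent} of the limiting macroscopic paths; although the increments of fBm are long-range dependent for $H>1/2$, the $\eta_k$ are second-difference-type functionals whose covariances decay like $|j-k|^{2H-4}$ and are therefore summable, so the weighted sums obey a conditional central limit theorem whose conditional variance is $\sigma_H^2$ times a bilinear functional of $D$ and $A$ at the paths, with $\sigma_H$ the summed bridge-deviation covariance of Lemma~\ref{lem:variance} and \eqref{def:sigmaH}.

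The main obstacle is precisely this conditional central limit theorem under long-range dependence. One must simultaneously establish the asymptotic decoupling of the microscopic bridge field from the macroscopic path (so the limit is conditionally Gaussian rather than path-determined), control the residual fractional-Gaussian-noise correlations to show the defining covariance series converges to the finite constant $\sigma_H$, and verify — through the fourth-moment content of the characteristic-function expansion — that no second-chaos, non-Gaussian component in the exogenous field survives the $n^{1/2-H}$ scaling. This is the step that genuinely separates the fBm case from the Brownian case, where the analogous cell bridges are exactly independent and an ordinary martingale central limit theorem suffices.
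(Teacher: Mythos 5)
Your architecture coincides with the paper's: all probabilistic content is placed in part (1), part (2) follows by a delta method, and part (3) by conditioning/continuous mapping (the paper does exactly this in Proposition~\ref{fprop:conj7} and Corollary~\ref{fcor:unconditional}). Moreover, your discrete decomposition is the paper's proof in disguise: integrating by parts over one cell shows that your trapezoidal error $\eta_k$ is exactly $\tfrac1n$ times the increment of the paper's process $\mathcal P_n(t)=\int_0^t(\lceil ns\rceil-ns-\tfrac12)\circ d\mathcal B^H(s)$, your telescoping identification of the drift reproduces $\bm\mu$ (the paper's $\mathcal V$ in Proposition~\ref{fprop:conj6}), your scaling count $n\sum_k f(k/n)\eta_k=O(n^{1/2-H})$ is the paper's normalization $n^{-\alpha/2}$ with $\alpha=2H-1$, and your summed covariance of the rescaled field is the paper's $\sigma_H^2$ from Lemma~\ref{lem:variance}. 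Your delta-method step is also correct in outline, with the caveat that the Taylor constant is random, so one needs the localization on sets where $\nabla F$ is Lipschitz that the paper carries out with the sets $L_\delta$ in the proof of Proposition~\ref{fprop:conj7}; your inequality $n^{H-1/2}\cdot nR_n=O(n^{H-3/2})\to0$ is the right one.

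The genuine gap is exactly the step you defer: the joint convergence $(\mathcal Z_n^H,\mathcal B^H_1,\mathcal B^H_2)\to(\mathcal Z^H,\mathcal B^H_1,\mathcal B^H_2)$, which is the entire difficulty of the theorem, is described but not proved. Two concrete problems. First, the phrase ``conditional central limit theorem, conditional on the macroscopic path'' cannot be taken literally: conditionally on the path, your field $\zeta_k$ is a \emph{deterministic} functional of that same path, so there is no residual randomness to be Gaussian. The correct formulation is joint convergence in law of the pair (path, rescaled microscopic field) to (path, exogenous independent Gaussian), with asymptotic independence coming from joint Gaussianity plus vanishing covariances; this is what the paper proves in Lemmas~\ref{flem:covariance} and~\ref{flem:independence} via Euler--Maclaurin computations. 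Second, because the weights $f(k/n)$ change with $n$ and the number of summands grows, finite-dimensional convergence of $(\zeta_k)$ together with decorrelation from the path does \emph{not} by itself yield convergence of $n^{-1/2}\sum_k f(k/n)\zeta_k$: one needs tightness of the partial-sum processes at the functional level and continuity of the pairing between the path and the limiting noise. The paper supplies precisely this missing content: a Kolmogorov-type moment bound for the divergence integral giving H\"older tightness (Lemmas~\ref{flem:Kolmogorov}, \ref{flem:tight}), hence the functional convergence of Lemma~\ref{lem:weak_convergence_one}, and then continuity of the Young integral from $C^\theta\times C^\beta$ with $\theta+\beta>1$, combined with Skorokhod representation, to pass to the limit in the path-weighted sums (Lemma~\ref{lem:convergence_integral}). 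Your alternative route via characteristic functions augmented by path functionals is in principle feasible (everything is a quadratic-plus-linear functional of a Gaussian process, so characteristic functions are explicit), but making it uniform in the growing sums and identifying the limit would amount to reconstructing these same estimates; as written, the central step of the proof is asserted rather than established.
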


\begin{remark}
    The explicit form of $\sigma_H$, the covariance matrix of $\mathcal Z$ and $\mathcal Z^H$, and the variance of $\mathcal Y$ and $\mathcal Y^H$ are given in Lemma~\ref{lem:variance}, with a simplified formula \eqref{def:sigmaH}, and Propositions~\ref{sprop:conj6},  \ref{fprop:conj6}, \ref{sprop:conj7}, and \ref{fprop:conj7}, respectively.
\end{remark}

\subsection{Discrete version of the results}
The following results, which do not require continuous-time observation of the processes, can be helpful when conducting hypothesis test in practice. The proofs are analogous to those of Theorems~\ref{sthm:cov} and \ref{fthm:cov}.
Here we only present the results without giving the proofs.
Define the following functions that only depend on $f$ and $g$ on the set $\{k/n: 0\leq k\leq n\}$:
\begin{align}
    \bm\mu_n(f,g)&=\begin{pmatrix}
        \frac{1}{4n}f(1)\sum_{k=0}^{n-1}g(k/n)+\frac{1}{4n}g(1)\sum_{k=0}^{n-1}f(k/n)-\frac{1}{4}f(1)g(1)\\
        \frac{1}{2n}f(1)\sum_{k=0}^{n-1}f(k/n)-\frac{1}{4}f(1)^2\\
        \frac{1}{2n}g(1)\sum_{k=0}^{n-1}g(k/n)-\frac{1}{4}g(1)^2
    \end{pmatrix},\\
    \mu_n(f,g)&=\nabla F(A_n(f,g),D_n(f),D_n(g))\cdot\bm\mu_n(f,g),\\
    \sigma_n^r(f,g)&=\frac{1}{4}\cdot\frac{\sqrt{(D_n(f)D_n(g)-A_n(f,g)^2)(D_n(f)+D_n(g)-2rA_n(f,g))}}{D_n(f)D_n(g)},\\
    \sigma_n^{r,H}(f,g)&=\sigma^d_H\cdot\frac{\sqrt{(D_n(f)D_n(g)-A_n(f,g)^2)(D_n(f)+D_n(g)-2rA_n(f,g))}}{D_n(f)D_n(g)},
\end{align}where $\sigma_H^d$ is a constant that depends on $H$ only. 
The explicit integral formula for $\sigma_H^d$ (here $\{\cdot\}$ denotes the fractional part) is
\begin{gather}
\label{def:sigma_Hd}
        \sigma_H^d=\sqrt{\frac{H(2H-1)}{4}\int_0^{1/2}\left[R_H^d\left(\gamma+\frac{1}{2}\right)-R_H^d(\gamma)\right]d\gamma},\text{ where}\\
        R_H^d(\gamma)=\frac{\gamma^{2H}}{H(2H-1)}-\frac{\gamma^{2H-1}}{2H-1}+\frac{\gamma^{2H-2}}{6}-(2-2H)\int_0^\infty\left(\{x\}^2-\{x\}+\frac{1}{6}\right)(\gamma+x)^{2H-3}dx.
    \end{gather}
\begin{proposition}
\label{sprop:discrete}
     Let $\mathcal W_1$ and $\mathcal W_2$ are two standard Brownian motions that are jointly Gaussian with constant correlation coefficient $r$.
    Let $\rho_n$ be defined as in \eqref{def:empirical} with $X_i=\mathcal W_i$ for $i\in\{1,2\}$.
    Then
    \begin{equation}
        \frac{n(\rho_n-\rho_{2n})-\mu_n(\mathcal W_1,\mathcal W_2)}{\sigma_n^r(\mathcal W_1,\mathcal W_2)}\to\mathcal N(0,1)\quad\text{in distribution.}
    \end{equation}
\end{proposition}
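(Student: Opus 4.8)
The plan is to run the same argument as in the proof of Theorem~\ref{sthm:cov}, now with the finer discretization $\mathcal U_{2n}=(\mathcal X^{1,2}_{2n},\mathcal X^{1,1}_{2n},\mathcal X^{2,2}_{2n})^{\mathrm T}$ playing the role that the continuous limit $\mathcal U$ played there. Since $\rho_n=F(\mathcal U_n)$ and $\rho_{2n}=F(\mathcal U_{2n})$ with $F(a,b,c)=a/\sqrt{bc}$, and both $\mathcal U_n$ and $\mathcal U_{2n}$ converge almost surely to the same limit $\mathcal U$, the first step is to linearize by the delta method, writing $n(\rho_n-\rho_{2n})=\nabla F(\mathcal U)\cdot n(\mathcal U_n-\mathcal U_{2n})+\mathcal R_n$ and showing that the second-order remainder $\mathcal R_n$ is $o_{\Prob}(1)$. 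The whole problem then reduces to the behaviour of the vector $n(\mathcal U_n-\mathcal U_{2n})$, exactly as part~1 of Theorem~\ref{sthm:cov} controls $n(\mathcal U_n-\mathcal U)$.

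Next I would condition on the coarse grid values $\{\mathcal W_i(k/n)\}$ and exploit the dyadic structure of the refinement. Splitting the $2n$-point Riemann sums into their even part (which reproduces the coarse sum) and their odd part, each component $\mathcal X^{i,j}_n-\mathcal X^{i,j}_{2n}$ becomes a sum over the midpoint values $\mathcal W_i((2k+1)/(2n))$. The Brownian bridge property gives $\mathcal W_i((2k+1)/(2n))=\tfrac12\big(\mathcal W_i(k/n)+\mathcal W_i((k+1)/n)\big)+\xi^{(i)}_k$, where, conditionally on the coarse grid, the $\xi^{(i)}_k$ are independent across $k$, each $\mathcal N(0,1/(4n))$, with $\mathrm{Corr}(\xi^{(1)}_k,\xi^{(2)}_k)=r$. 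Substituting and expanding the squares and products isolates three pieces: a grid-measurable term whose conditional expectation, times $n$, agrees with the drift $\bm\mu_n(\mathcal W_1,\mathcal W_2)$ up to $o(1)$ (here discrete It\^o/summation-by-parts together with the quadratic-variation limit $\sum_k(\mathcal W_i((k+1)/n)-\mathcal W_i(k/n))^2\to1$ reduce the true conditional mean to $\bm\mu_n$); a conditionally-Gaussian leading fluctuation, a linear form $\sum_k(\cdots)\xi^{(i)}_k$; and a centered chi-square-type remainder $\sum_k(\xi^{(i)}_k\xi^{(j)}_k-\E[\,\cdot\,])$ of conditional variance $O(1/n)$, hence negligible. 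This yields the covariance-level decomposition $n(\mathcal U_n-\mathcal U_{2n})=\bm\mu_n(\mathcal W_1,\mathcal W_2)+\tilde{\mathcal Z}_n$ with $\tilde{\mathcal Z}_n$ conditionally Gaussian in the limit, mirroring part~1 of Theorem~\ref{sthm:cov}.

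Plugging this into the linearization gives $n(\rho_n-\rho_{2n})=\mu_n(\mathcal W_1,\mathcal W_2)+\nabla F(\mathcal U)\cdot\tilde{\mathcal Z}_n+o_{\Prob}(1)$, where the path-measurable part is exactly $\mu_n=\nabla F\cdot\bm\mu_n$. Writing $u_k=\bar{\mathcal W}_1-\mathcal W_1(k/n)$ and $v_k=\bar{\mathcal W}_2-\mathcal W_2(k/n)$, the fluctuation is, to leading order, $\sum_k(P_k\xi^{(1)}_k+Q_k\xi^{(2)}_k)$ with $P_k=\tfrac12 F_a v_k+F_b u_k$ and $Q_k=\tfrac12 F_a u_k+F_c v_k$, where $(F_a,F_b,F_c)=\nabla F(\mathcal U)$. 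Its conditional variance is $\tfrac1{4n}\sum_k(P_k^2+Q_k^2+2rP_kQ_k)$, and the Riemann-sum limits $\tfrac1n\sum_k u_k^2\to D(\mathcal W_1)$, $\tfrac1n\sum_k v_k^2\to D(\mathcal W_2)$, $\tfrac1n\sum_k u_kv_k\to A(\mathcal W_1,\mathcal W_2)$ collapse it, after contracting with $\nabla F$, to $(\sigma^r_n(\mathcal W_1,\mathcal W_2))^2$ in the limit; this algebra is exactly where the constant $1/4$ (rather than the continuous $1/\sqrt{12}$) appears, since the bridge now enters only through its midpoint value of variance $1/(4n)$ rather than through its time integral. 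A conditional Lindeberg / conditional characteristic-function argument (the weights $P_k,Q_k$ are bounded and none dominates) then gives conditional asymptotic normality of $\nabla F\cdot\tilde{\mathcal Z}_n/\sigma^r_n$, and integrating out the conditioning yields the stated convergence of $\big(n(\rho_n-\rho_{2n})-\mu_n\big)/\sigma^r_n$ to $\mathcal N(0,1)$.

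The main obstacle is the conditional, rather than merely unconditional, limit theory: one must establish that $\tilde{\mathcal Z}_n$ converges jointly with $(\mathcal W_1,\mathcal W_2)$ to a limit that is Gaussian conditionally on the paths with the precise random variance above, which is what licenses the Slutsky step with the random, $n$-dependent normalizer $\sigma^r_n(\mathcal W_1,\mathcal W_2)$ (using $D(\mathcal W_i)>0$ and $D(\mathcal W_1)D(\mathcal W_2)-A(\mathcal W_1,\mathcal W_2)^2>0$ almost surely). Alongside this, one must control the delta-method remainder $\mathcal R_n$ and the chi-square remainder uniformly enough that they do not disturb the $n^{-1}$-scale limit, and must track the cross-correlation $r$ correctly through the covariance contraction into the factor $D_n(\mathcal W_1)+D_n(\mathcal W_2)-2rA_n(\mathcal W_1,\mathcal W_2)$. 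All of these are precisely the points handled in the proof of Theorem~\ref{sthm:cov}, and the same techniques transfer, with the midpoint bridge variables $\{\xi^{(i)}_k\}$ playing the role that the exogenous Wiener process plays there.
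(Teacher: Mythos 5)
Your proposal is correct, but it follows a genuinely different route from the one the paper intends. The paper omits the proof and states it is ``analogous'' to that of Theorem~\ref{sthm:cov}: represent $n(\mathcal U_n-\mathcal U_{2n})$ as second-chaos It\^o integrals with sawtooth kernels $M_n-M_{2n}$, prove functional weak convergence of the integrand processes jointly with the paths to an exogenous Brownian motion (the Kurtz--Protter step), and then run the delta method and the conditional characteristic-function argument; note that doing this by simply subtracting two applications of Theorem~\ref{sthm:cov} would additionally require the asymptotic \emph{cross-scale} covariance between the level-$n$ and level-$2n$ fluctuations, which is exactly what forces the new constant. Your argument sidesteps all of this: since $\mathcal U_n$ is measurable with respect to the coarse grid and $\mathcal U_{2n}$ adds only the midpoints, conditioning on the $n$-grid and using the exact bridge-midpoint law (residuals $\mathcal N(0,1/(4n))$, independent over $k$, correlation $r$ across the two processes) makes the leading fluctuation \emph{exactly} conditionally Gaussian, so no functional limit theorem for stochastic integrals is needed; discrete summation by parts plus quadratic-variation concentration $\sum_k\Delta_k^2=1+O_{\Prob}(n^{-1/2})$ identifies the drift as $\bm\mu_n+o_{\Prob}(1)$, and your Riemann-sum computation of $\tfrac1{4n}\sum_k(P_k^2+Q_k^2+2rP_kQ_k)$ does collapse, after the contraction identity \eqref{eq:verifyf}, to the $\tfrac1{16}$ prefactor in $(\sigma_n^r)^2$ --- so the constant $1/4$ is explained transparently by the midpoint variance. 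The trade-off is that your method leans on the independent-increments/Markov structure of Brownian motion, so it does not transfer to the fractional counterpart (Proposition~\ref{fprop:discrete}), whereas the paper's chaos/weak-convergence machinery covers both cases nearly verbatim. Two small points to tighten: keep the weights grid-measurable (use the discrete means and $\nabla F$ evaluated at grid quantities rather than $\bar{\mathcal W}_i$ and $\nabla F(\mathcal U)$, correcting by $o_{\Prob}(1)$ afterwards) so that the conditional Gaussianity is exact; and the delta-method remainder needs the same truncation/uniform-continuity care near the singular set $\{(x,y,z):xyz=0\}$ of $\nabla F$ as in Proposition~\ref{sprop:conj7}, rather than a bare Taylor bound.
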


\begin{proposition}
\label{fprop:discrete}
     Let $\mathcal B^H_1$ and $\mathcal B^H_2$ are two fractional Brownian motions, with Hurst parameter $H>1/2$, which are jointly Gaussian with constant correlation coefficient $r$.
    Let $\rho_n$ be defined as in \eqref{def:empirical} with $X_i=\mathcal B^H_i$ for $i\in\{1,2\}$.
    Then
    \begin{equation}
        \frac{n^{H+\frac{1}{2}}(\rho_n-\rho_{2n})-n^{H-\frac{1}{2}}\mu_n(\mathcal B^H_1,\mathcal B^H_2)}{\sigma_n^{r,H}(\mathcal B^H_1,\mathcal B^H_2)}\to\mathcal N(0,1)\quad\text{in distribution.}
    \end{equation}
\end{proposition}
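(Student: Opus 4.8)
The plan is to replay the proof of Theorem~\ref{fthm:cov} verbatim in structure, but with the continuous-time correlation $\rho=F(\mathcal U)$ replaced throughout by the doubled-frequency correlation $\rho_{2n}=F(\mathcal U_{2n})$, so that the object playing the role of the Young integral (the continuum limit) is now the Riemann--Stieltjes sum on the $2n$-grid. First I would apply the delta method to the pair $(\rho_n,\rho_{2n})$: writing $\rho_n-\rho_{2n}=F(\mathcal U_n)-F(\mathcal U_{2n})$ and Taylor-expanding $F$ to first order about $\mathcal U_n$, one gets
\[
n(\rho_n-\rho_{2n})=\nabla F(\mathcal U_n)\cdot n(\mathcal U_n-\mathcal U_{2n})+\mathcal R_n ,
\]
where the quadratic remainder $\mathcal R_n$ is controlled by $n\,\|\mathcal U_n-\mathcal U_{2n}\|^2=O(n^{-1})$, negligible relative to the $n^{1/2-H}$ fluctuations. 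This reduces the statement to the asymptotics of the vector refinement error $n(\mathcal U_n-\mathcal U_{2n})$, with $\nabla F(\mathcal U_n)$ supplying exactly the rational factor in $A_n,D_n$ that appears both in $\mu_n$ and in $\sigma^{r,H}_n$.

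Second, I would analyze $n(\mathcal U_n-\mathcal U_{2n})$ directly as a refinement error rather than subtracting two instances of Theorem~\ref{fthm:cov} (which would force me to track the joint law of two exogenous limit processes at frequencies $n$ and $2n$). Since the $2n$-grid contains the $n$-grid, the even/odd splitting $\frac{1}{2n}\sum_{j=0}^{2n-1}h(j/(2n))=\frac12\cdot\frac1n\sum_{k=0}^{n-1}h(k/n)+\frac{1}{2n}\sum_{k=0}^{n-1}h(m_k)$, with midpoints $m_k=(2k+1)/(2n)$, turns each component into a ``comb'' Riemann--Stieltjes sum. For the variance component with $h=f^2$ one finds that $n\bigl(D_n(f)-D_{2n}(f)\bigr)$ equals $-\sum_{k=0}^{n-1}f(k/n)\delta_k-\tfrac12\sum_k\delta_k^2$ plus the contribution of the squared empirical-mean term, with half-step increments $\delta_k=f(m_k)-f(k/n)$, and similarly for the covariance component. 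Because $H>1/2$, the comb sum $\sum_k f(k/n)\delta_k$ converges pathwise to one half of the Young integral $\int_0^1 f\,df=\tfrac12 f(1)^2$, namely $\tfrac14 f(1)^2$; the empirical-mean term contributes the remaining boundary pieces; and together they assemble into the path functional $\bm\mu_n(\mathcal B^H_1,\mathcal B^H_2)$, while the quadratic-variation term $\sum_k\delta_k^2=O(n^{1-2H})\to0$. The fluctuation of interest is therefore the deviation of these comb sums from their path-functional limits, which is of exact order $n^{1/2-H}$.

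The main obstacle is the third step: establishing the conditional central limit theorem for this rescaled fluctuation and identifying its conditional variance, i.e. the analog of part~1 of Theorem~\ref{fthm:cov} for the refinement error. Because fractional Gaussian noise has long memory for $H>1/2$, the half-step increments $\delta_k$ are strongly correlated across $k$, so no block-independence argument is available; instead I would represent the centered fluctuation as a second-Wiener-chaos functional of the underlying fBm and run the same characteristic-function computation used for Theorem~\ref{fthm:cov}, verifying that the fourth-cumulant contributions vanish and that the fine-scale fluctuation decorrelates asymptotically from the macroscopic path, so that the limit is conditionally Gaussian—in fact an exogenous Gaussian modulated by a path functional. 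The delicate computational heart is the limiting variance: the covariances $\E[\delta_k\delta_\ell]$, summed against the path values, must—after an Euler--Maclaurin reduction of $\sum_x(\gamma+x)^{2H-3}$ against the second Bernoulli kernel $\{x\}^2-\{x\}+\tfrac16$—reproduce exactly the kernel $R_H^d$ and hence the constant $\sigma_H^d$ of \eqref{def:sigma_Hd}. It is precisely the ``coarse-versus-midpoint'' geometry of the comb, reflected in the $R_H^d(\gamma+\tfrac12)-R_H^d(\gamma)$ and $\int_0^{1/2}$ appearing in $\sigma_H^d$, that distinguishes $\sigma_H^d$ from the continuum constant $\sigma_H$.

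Finally, I would assemble the pieces. Applying $\nabla F(\mathcal U_n)$ to the decomposed vector error identifies the scalar mean as $\mu_n(\mathcal B^H_1,\mathcal B^H_2)$ and, after combining the three limiting (co)variances weighted by the gradient, the scalar conditional standard deviation as $\sigma^{r,H}_n(\mathcal B^H_1,\mathcal B^H_2)$; this is where the factors $D_nD_n-A_n^2$ and $D_n+D_n-2rA_n$, the latter carrying the cross-correlation $r$, arise. Thus $n^{H-1/2}\bigl(n(\rho_n-\rho_{2n})-\mu_n\bigr)$ converges, jointly with the paths, to $\sigma^{r,H}_n$-modulated exogenous Gaussian noise. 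Since $\sigma^{r,H}_n(\mathcal B^H_1,\mathcal B^H_2)$ is a consistent, path-measurable studentizing factor, a conditional-CLT-plus-Slutsky argument, exactly as in part~3 of Theorem~\ref{fthm:cov}, yields the stated convergence of the studentized statistic to $\mathcal N(0,1)$.
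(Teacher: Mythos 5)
Your overall skeleton is the right one, and it matches the route the paper intends (the paper gives no separate proof of Proposition~\ref{fprop:discrete}: it declares the proof analogous to that of Theorem~\ref{fthm:cov}). Your even/odd splitting of the $2n$-grid is exactly the elementary form of the paper's kernel decomposition, since the difference kernel $n\bigl(M_n(s,t)-M_{2n}(s,t)\bigr)$ has centered integrand equal to the comb function $(\lceil ns\rceil-ns)-\tfrac12(\lceil 2ns\rceil-2ns)-\tfrac14$, which equals $+\tfrac14$ on left half-intervals and $-\tfrac14$ on right half-intervals; your identification of the mean term with $\bm\mu_n$, the negligibility of the quadratic-variation terms at scale $n^{H-1/2}$, and the delta-method-plus-studentization ending are all consistent with Propositions~\ref{fprop:conj6}, \ref{fprop:conj7} and Corollary~\ref{fcor:unconditional} (with the caveat that your Taylor-remainder bound tacitly assumes a bounded Hessian, so it needs the same localization of $\nabla F$ away from the degenerate set $\{xyz=0\}$ that the paper carries out in Proposition~\ref{fprop:conj7}).

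The genuine gap is your third step. You propose to prove the conditional CLT by viewing the centered fluctuation as a second-chaos functional and ``verifying that the fourth-cumulant contributions vanish.'' That verification is impossible: the limit of $n^{H-\frac12}\bigl(n(\rho_n-\rho_{2n})-\mu_n\bigr)$ is a Gaussian \emph{mixture} --- normal only conditionally on $(\mathcal B^H_1,\mathcal B^H_2)$, with genuinely random conditional variance $\sigma_n^{r,H}(\mathcal B^H_1,\mathcal B^H_2)^2$ --- and a nondegenerate Gaussian mixture has strictly positive excess kurtosis, so the fourth cumulant of the statistic does \emph{not} vanish in the limit; fourth-moment-theorem arguments target Gaussian limits and cannot produce mixed-Gaussian ones. (Conditioning does not rescue the computation at finite $n$ either: the pre-limit statistic is a measurable functional of the observed paths, hence conditionally degenerate.) What the paper's method exploits, and what you should substitute here, is that the fluctuation is not a generic second-chaos element: it factors as a Young integral of path functionals against \emph{first-chaos} processes --- Wiener integrals of the deterministic comb kernel, the analogues of $\mathcal P_n,\mathcal Q_n$ --- which are jointly Gaussian with the fBm paths for every $n$. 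Joint weak convergence of these processes, scaled by $n^{H-\frac12}$, to $\sigma_H^d$ times an exogenous Brownian motion independent of the paths therefore reduces to covariance convergence (your Euler--Maclaurin computation, which is indeed where \eqref{def:sigma_Hd} and the $R_H^d(\gamma+\tfrac12)-R_H^d(\gamma)$ geometry enter) plus Kolmogorov--H\"older tightness as in Lemma~\ref{flem:Kolmogorov}; the conditionally Gaussian limit of the iterated integrals then follows from continuity of the Young integral $C^\theta\times C^\beta\to C^\beta$ and Skorokhod representation, exactly as in Lemmas~\ref{lem:weak_convergence_two} and \ref{lem:convergence_integral}. With that replacement of your chaos/cumulant step, the rest of your proposal goes through.
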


\subsection{Hypothesis testing}
Assuming that one has access to the continuous-time observation of the processes, then Part 3 of Theorem~\ref{sthm:cov} and Theorem~\ref{fthm:cov} can be applied to conduct a hypothesis test of independence for standard Brownian motions and fractional Brownian motions, respectively.
For example, for standard Brownian motions, a hypothesis test based on Part 3 of Theorem~\ref{sthm:cov} can be carried out as follows.
\begin{enumerate}[(1)]
    \item Observe paths $\mathcal W_1$ and $\mathcal W_2$;
    \item Based on $\mathcal W_1$ and $\mathcal W_2$, produce $\rho$, $\mu(\mathcal W_1,\mathcal W_2)$, and $\rho_n$ for large $n$.
    \item Under $H_0$, produce $\sigma^r(\mathcal W_1,\mathcal W_2)$ based on $\mathcal W_1$ and $\mathcal W_2$;
    \item Reject $H_0$ if $|n(\rho_n-\rho)-\mu(\mathcal W_1,\mathcal W_2)|/{\sigma^r(\mathcal W_1,\mathcal W_2)}$ is too large.
\end{enumerate}
However, in practice, continuous-time observation is often not possible.
In this more realistic scenario, one can either (a) use a discrete-time observation of a much higher frequency $m\gg n$ to produce (approximation of) $\rho$, $\mu(\mathcal W_1,\mathcal W_2)$, and $\sigma^r(\mathcal W_1,\mathcal W_2)$ under $H_0$, produce $\rho_n$ using a discrete-time observation of a relatively lower frequency $n$, and conduct a test using those approximated quantities; or (b) conduct a hypothesis test based on Proposition~\ref{sprop:discrete} for standard Brownian motions or Proposition~\ref{fprop:discrete} for fractional Brownian motions, in a similar fashion to that described above.

\begin{remark}
    In practice, since $\sigma_H$ and $\sigma_H^d$ are rather complicated to compute,  one can instead first sample (e.g. using Monte Carlo) two fractional Brownian motions and calculate the value of $\sigma_H$ and $\sigma_H^d$, which are constants that depend only on the Hurst parameter $H$, based on the convergence results above. 
\end{remark}

\section{Results on standard Brownian motions}
This section will prove Theorem~\ref{sthm:cov}. Suppose that we have two standard Brownian motions $\mathcal W_1$ and $\mathcal W_2$ which are jointly Gaussian with constant correlation coefficient $r$. We will first write the empirical variance and covariance in the form of stochastic integrals, prove weak convergence of the integrands and hence the integrals, and apply the delta method to derive the asymptotic normality concerning empirical correlation coefficient, as stated in Part 3 of Theorem~\ref{sthm:cov}.
We begin with only one Brownian motion.
\label{sec:sbm}
    \subsection{Representation of the empirical variance as iterated It\^o integrals}
Suppose that $\mathcal  W$ is a Brownian motion.
It can be verified that 
\begin{equation}
\label{eq:s1}
    \begin{aligned}
        n(D_n(\mathcal W)-D(\mathcal W))=2n\int_0^1\int_0^t(M_n(s,t)-M(s,t))d\mathcal W(s)d\mathcal W(t)+\frac{1}{6n},
    \end{aligned}
\end{equation}
where 
\begin{equation}
\begin{aligned}
    M_n(s,t)&=\frac{\lceil ns\rceil}{n}\wedge\frac{\lceil nt\rceil}{n}-\frac{\lceil ns\rceil}{n}\cdot\frac{\lceil nt\rceil}{n},\\
    M(s,t)&=s\wedge t-st.
\end{aligned}
\end{equation}
Then the r.h.s. of \eqref{eq:s1} can be written as the sum of three terms:
\begin{equation}
    \label{eq:sdecom}
\begin{aligned}
    nD_n^1&=2\int_0^1\int_0^t(\lceil ns\rceil-ns)(1-t)d\mathcal W(s)d\mathcal W(t),\\
    nD_n^2&=-2\int_0^1\int_0^t(\lceil nt\rceil-nt)sd\mathcal W(s)d\mathcal W(t),\\
    nD_n^3&=-2\int_0^1\int_0^t\frac{1}{n}(\lceil nt\rceil-nt)(\lceil ns\rceil-ns)d\mathcal W(s)d\mathcal W(t)+\frac{1}{6n}.
\end{aligned}
\end{equation}
The last term $nD_n^3$ converges to $0$ in $L^2$ as $n\to\infty$. The first two terms can be written as Wiener integrals with proper kernels
\begin{align}
    nD_n^1&=I_2\left((1-s\vee t)(\lceil n(s\wedge t)\rceil - n(s\wedge t))\right),\\
    nD_n^2&=-I_2\left( (s\wedge t)(\lceil n(s\vee t)\rceil - n(s\vee t))\right)\\
        &=-\tilde I_2\left((1-s\vee t)(n(s\wedge t)-\lfloor n(s\wedge t)\rfloor)\right)
    \end{align}
where $\tilde I_2$ is the Wiener integral w.r.t. the Brownian motion $\tilde {\mathcal W}_t:=\mathcal W_{1-t}-\mathcal W_1$.
They can be written as iterated It\^o integrals with similar forms but with different Brownian motions
\begin{equation}
    \begin{aligned}
     nD_n^1&=2\int_0^1(1-t)\int_0^t(\lceil ns\rceil-ns)d\mathcal W(s)d\mathcal W(t),\\
        nD_n^2&=-2\int_0^1(1-t)\int_0^t(ns-\lfloor ns\rfloor)d\tilde{\mathcal  W}(s)d\tilde {\mathcal W}(t).
    \end{aligned}
\end{equation}
To obtain the limiting distribution of the sum, we would need the results of the joint distributions of random variables involved in the expressions of $nD_n^1$ and $nD_n^2$.

Define
\begin{equation}
    \mathcal P_n(t)=\int_0^t\left(\lceil ns\rceil-ns-\frac{1}{2}\right)d{\mathcal W}(s),
\end{equation}
\begin{equation}
    \mathcal Q_n(t)=\int_0^t\left(ns-\lfloor ns\rfloor-\frac{1}{2}\right)d\tilde {\mathcal W}(s)=-\int_{1-t}^1\left(\lceil ns\rceil-ns-\frac{1}{2}\right)d\mathcal W(s).
\end{equation}
Then
\begin{equation}
\label{eq:s2}
\begin{aligned}
    nD_n^1 &= \int_0^1 (1-t){\mathcal W}(t)d{\mathcal W}(t)+2\int_0^1(1-t)\mathcal P_n(t)d{\mathcal W}(t)\\
    &=\frac{1}{2}I_2(1-s\vee t)+2\int_0^1(1-t)\mathcal P_n(t)d{\mathcal W}(t).\\
    nD_n^2&=-\int_0^1(1-t)\tilde {\mathcal W}(t)d\tilde {\mathcal W}(t)-2\int_0^1(1-t)\mathcal Q_n(t)d\tilde {\mathcal W}(t)\\
    &=-\frac{1}{2}\bar I_2(1-s\vee t)-2\int_0^1(1-t)\mathcal Q_n(t)d\tilde {\mathcal W}(t)\\
    &=-\frac{1}{2} I_2(s\wedge t)-2\int_0^1(1-t)\mathcal Q_n(t)d\tilde {\mathcal W}(t).
\end{aligned}
\end{equation}
    \subsection{Weak convergence of the integrands}
    \begin{lemma}
\label{lem:weak_conv1}
    $(\mathcal P_n(t),{\mathcal W}(t),\mathcal Q_n(t),\tilde {\mathcal W}(t))$ converges to $(\frac{1}{\sqrt{12}}\mathcal{B}(t),{\mathcal W}(t),\frac{1}{\sqrt{12}}\tilde {\mathcal{B}}(t),\tilde {\mathcal W}(t))$ in $\mathbf{C}([0,1],\mathbb R^4)$ in distribution, as $n\to\infty$, where $\mathcal B$ and $\mathcal W$ are independent Brownian motions, $\tilde {\mathcal W}(t)={\mathcal W}({1-t})-{\mathcal W}(1)$, $\tilde {\mathcal{B}}(t)={\mathcal{B}}(1-t)-{\mathcal{B}}(1)$. 
\end{lemma}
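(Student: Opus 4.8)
The plan is to reduce the four-dimensional functional convergence to a two-dimensional one via an exact pathwise identity, and then to prove that two-dimensional statement along the standard route of finite-dimensional convergence plus tightness, exploiting throughout that $\mathcal P_n$, $\mathcal Q_n$, $\mathcal W$, $\tilde{\mathcal W}$ are all Wiener integrals against the single Brownian motion $\mathcal W$, hence jointly centered Gaussian.

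First I would record the algebraic relations among the four coordinates. Writing $g_n(s):=\lceil ns\rceil-ns-\tfrac12$, so that $\mathcal P_n(t)=\int_0^t g_n(s)\,d\mathcal W(s)$, the second displayed definition of $\mathcal Q_n$ gives $\mathcal Q_n(t)=-\int_{1-t}^1 g_n(s)\,d\mathcal W(s)=\mathcal P_n(1-t)-\mathcal P_n(1)$, while $\tilde{\mathcal W}(t)=\mathcal W(1-t)-\mathcal W(1)$ holds by definition. Hence $(\mathcal P_n,\mathcal W,\mathcal Q_n,\tilde{\mathcal W})=\Phi(\mathcal P_n,\mathcal W)$, where the map $\Phi\colon \mathbf C([0,1],\mathbb R^2)\to\mathbf C([0,1],\mathbb R^4)$ defined by $\Phi(\phi,\psi)=\bigl(\phi,\ \psi,\ \phi(1-\cdot)-\phi(1),\ \psi(1-\cdot)-\psi(1)\bigr)$ is continuous for the uniform topology, since time reversal and evaluation at $1$ are continuous. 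By the continuous mapping theorem it therefore suffices to show $(\mathcal P_n,\mathcal W)\to(\tfrac{1}{\sqrt{12}}\mathcal B,\mathcal W)$ in $\mathbf C([0,1],\mathbb R^2)$ with $\mathcal B$ a Brownian motion independent of $\mathcal W$: applying $\Phi$ then yields the claimed limit, because $\Phi(\tfrac{1}{\sqrt{12}}\mathcal B,\mathcal W)=(\tfrac{1}{\sqrt{12}}\mathcal B,\mathcal W,\tfrac{1}{\sqrt{12}}\tilde{\mathcal B},\tilde{\mathcal W})$.

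For the finite-dimensional distributions, any vector assembled from $(\mathcal P_n,\mathcal W)$ at finitely many times is centered Gaussian, so it is enough to check that the covariances converge to those of $(\tfrac{1}{\sqrt{12}}\mathcal B,\mathcal W)$ with $\mathcal B\perp\mathcal W$. Noting that $g_n$ is $1/n$-periodic with mean zero on each period and $|g_n|\le\tfrac12$, Itô isometry gives the three relevant quantities: $\mathrm{Cov}(\mathcal W(s),\mathcal W(t))=s\wedge t$ exactly; $\mathrm{Cov}(\mathcal P_n(s),\mathcal W(t))=\int_0^{s\wedge t}g_n(u)\,du=O(1/n)\to 0$, since only a partial final period contributes; and $\mathrm{Cov}(\mathcal P_n(s),\mathcal P_n(t))=\int_0^{s\wedge t}g_n(u)^2\,du\to\tfrac{1}{12}(s\wedge t)$, because each full period of $g_n^2$ integrates to $\tfrac{1}{12n}$. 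These are precisely the covariances of $(\tfrac{1}{\sqrt{12}}\mathcal B,\mathcal W)$ under independence, which settles the finite-dimensional convergence.

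For tightness it suffices to treat $\mathcal P_n$, the constant sequence $\mathcal W$ being trivially tight and tightness of the pair following coordinatewise. Since $\mathcal P_n(t)-\mathcal P_n(s)=\int_s^t g_n\,d\mathcal W$ is Gaussian with variance $\int_s^t g_n(u)^2\,du\le\tfrac14|t-s|$, the fourth-moment form of Kolmogorov's criterion applies with the $n$-uniform bound $\E\bigl|\mathcal P_n(t)-\mathcal P_n(s)\bigr|^4=3\bigl(\int_s^t g_n(u)^2\,du\bigr)^2\le\tfrac{3}{16}|t-s|^2$, giving tightness in $\mathbf C([0,1])$. Combining with the finite-dimensional convergence yields $(\mathcal P_n,\mathcal W)\to(\tfrac{1}{\sqrt{12}}\mathcal B,\mathcal W)$ in $\mathbf C([0,1],\mathbb R^2)$, and the continuous-mapping step above completes the proof. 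I expect no genuine analytic obstacle here; the one point that needs care is organizational, namely spotting the pathwise identity $\mathcal Q_n(t)=\mathcal P_n(1-t)-\mathcal P_n(1)$, which delivers the correct joint limit — in particular the time-reversal coupling between $\mathcal B$ and $\tilde{\mathcal B}$ and between $\mathcal W$ and $\tilde{\mathcal W}$ — without having to compute the $\mathcal P_n$–$\mathcal Q_n$ cross-covariances by hand.
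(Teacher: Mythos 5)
Your proof is correct, and it differs from the paper's in a way worth noting. The paper attacks the four-dimensional vector directly: it asserts tightness as straightforward, invokes joint Gaussianity, and then computes the full limiting covariance matrix entry by entry, including the cross terms such as $\E\,\mathcal P_n(t)\mathcal Q_n(s)\to\frac{1}{12}\,\big((1-s-t)\wedge 0\big)$ and $\E\,\mathcal P_n(t)\tilde{\mathcal W}(s)\to 0$, with the remaining entries handled ``in the same way.'' You instead exploit the exact pathwise identity $\mathcal Q_n(t)=\mathcal P_n(1-t)-\mathcal P_n(1)$ (which is precisely the paper's second displayed formula for $\mathcal Q_n$) together with $\tilde{\mathcal W}(t)=\mathcal W(1-t)-\mathcal W(1)$, so that the quadruple is the image of the pair $(\mathcal P_n,\mathcal W)$ under the continuous time-reversal map $\Phi$; the continuous mapping theorem then reduces everything to the two-dimensional convergence $(\mathcal P_n,\mathcal W)\to(\tfrac{1}{\sqrt{12}}\mathcal B,\mathcal W)$. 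This buys two things: only two nontrivial covariance limits need to be checked (your computations $\int_0^{s\wedge t}g_n^2\to\frac{1}{12}(s\wedge t)$ and $\int_0^{s\wedge t}g_n=O(1/n)$ are correct, matching items (1)--(2) of the paper), and the correct coupling of the limit --- that the third and fourth coordinates are the time reversals of the first two --- comes for free rather than being read off from cross-covariances. You also make explicit the tightness argument the paper leaves unproved, via the $n$-uniform Gaussian fourth-moment bound $\E|\mathcal P_n(t)-\mathcal P_n(s)|^4\le\frac{3}{16}|t-s|^2$ and Kolmogorov's criterion, plus the observation that coordinatewise tightness suffices in $\mathbf C([0,1],\mathbb R^2)$. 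What the paper's direct route buys in exchange is that the full limiting covariance structure of the quadruple is displayed explicitly, which is mildly informative downstream; your route is leaner and less error-prone, at the cost of hiding those cross-covariances inside the map $\Phi$.
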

\begin{proof}
    The tightness is straightforward to verify, so it remains to prove the finite-dimensional-distribution convergence. 
    Since the process $(P_n(t),{\mathcal W}_t,Q_n(t),\tilde {\mathcal W}_t)$ is jointly Gaussian, the weak convergence is equivalent to the convergence of the covariance matrix. 
    As $n\to\infty$, 
    \begin{enumerate}[(1)]
        \item \begin{equation}
            \E \mathcal P_n(t)\mathcal P_n(s)=\int_0^{s\wedge t}\left(\lceil nu\rceil-nu-\frac{1}{2}\right)^2du\to\frac{1}{12}(s\wedge t)=\frac{1}{12}\E \mathcal B(t)\mathcal B(s);
        \end{equation}
        \item \begin{equation}
            \E \mathcal P_n(t)\mathcal W(s)=\int_0^{s\wedge t}\left(\lceil nu\rceil-nu-\frac{1}{2}\right)du\to0=\frac{1}{\sqrt{12}}\E\mathcal B(t)\mathcal W(s);
        \end{equation}
        \item \begin{equation}
            \begin{aligned}
                \E \mathcal P_n(t)\mathcal Q_n(s)&=-\int_{t\wedge(1-s)}^t\left(\lceil nu\rceil-nu-\frac{1}{2}\right)^2du\to\frac{1}{12}((1-s-t)\wedge0)=\frac{1}{12}\E\mathcal B(t)\tilde{\mathcal B}(s);
            \end{aligned}
        \end{equation}
        \item 
        \begin{equation}
            \begin{aligned}
                \E \mathcal P_n(t)\tilde{\mathcal W}(s)&=-\int_{t\wedge(1-s)}^t\left(\lceil nu\rceil-nu-\frac{1}{2}\right)du\to0=\frac{1}{\sqrt{12}}\E\mathcal B(t)\tilde{\mathcal W}(s).
            \end{aligned}
        \end{equation}
    \end{enumerate}
    The convergence of $\E\mathcal W(t)\mathcal Q_n(s)$, $\E\tilde{\mathcal W}(t)\mathcal Q_n(s)$, and $\E\mathcal Q_n(t)\mathcal Q_n(s)$ can be proved in the same way, and the other required results of convergence are trivial.
\end{proof}
    \subsection{Weak convergence of the covariance and correlation: Proof of Theorem~\ref{sthm:cov}}
\subsubsection{Covariance: independence case}
\label{ssec:indep}
    Now suppose that we have two independent Brownian motion $W_1$ and $W_2$. Then the same steps lead to the following lemma.
\begin{lemma}
Define
\begin{align}
    P_n(t)&=\int_0^t\left(\lceil ns\rceil-ns-\frac{1}{2}\right)dW_1(s),\\
    Q_n(t)&=\int_0^t\left(ns-\lfloor ns\rfloor-\frac{1}{2}\right)d\tilde W_1(s)=-\int_{1-t}^1\left(\lceil ns\rceil-ns-\frac{1}{2}\right)dW_1(s),\\
    R_n(t)&=\int_0^t\left(\lceil ns\rceil-ns-\frac{1}{2}\right)dW_2(s),\\
    S_n(t)&=\int_0^t\left(ns-\lfloor ns\rfloor-\frac{1}{2}\right)d\tilde W_2(s)=-\int_{1-t}^1\left(\lceil ns\rceil-ns-\frac{1}{2}\right)dW_2(s).
\end{align}
The process
\begin{equation}
    (P_n(t),Q_n(t),R_n(t),S_n(t),W_1(t),\tilde W_1(t),W_2(t),\tilde W_2(t))
\end{equation}
converges in distribution to 
\begin{equation}
    \left(\frac{1}{\sqrt{12}}{\mathcal{B}}_1(t),\frac{1}{\sqrt{12}}\tilde {\mathcal{B}}_1(t),\frac{1}{\sqrt{12}}{\mathcal{B}}_2(t),\frac{1}{\sqrt{12}}\tilde {\mathcal{B}}_2(t),W_1(t),\tilde W_1(t),W_2(t),\tilde W_2(t)\right),
\end{equation}
where ${\mathcal{B}}_1,{\mathcal{B}}_2,W_1,W_2$ are independent Brownian motion and $\tilde X_i(t)=X_i(1-t)-X_i(1)$, for $X\in\{\mathfrak B,W\}$ and $i\in\{1,2\}$.
\end{lemma}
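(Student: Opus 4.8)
The plan is to reduce this eight-dimensional statement to the four-dimensional Lemma~\ref{lem:weak_conv1} by exploiting the independence of $W_1$ and $W_2$. First I would record that the entire vector process is jointly Gaussian: each of its eight components is a Wiener integral with a deterministic kernel against either $W_1$ or $W_2$, so the values at any finite set of times $t_1,\dots,t_m$ form a centered Gaussian vector. Exactly as in the proof of Lemma~\ref{lem:weak_conv1}, finite-dimensional convergence is therefore equivalent to convergence of the covariance structure, and Gaussianity of the limit is automatic once the limiting covariances have been identified; tightness is inherited componentwise and is routine.

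Next I would split the covariance structure into three blocks according to the driving Brownian motion. The $W_1$-block $(P_n,Q_n,W_1,\tilde W_1)$ and the $W_2$-block $(R_n,S_n,W_2,\tilde W_2)$ each consist of the same four objects (built from a single Brownian motion) as the quadruple $(\mathcal P_n,\mathcal W,\mathcal Q_n,\tilde{\mathcal W})$ treated in Lemma~\ref{lem:weak_conv1}; only the relabeling of the driving process differs, so every covariance computation carries over verbatim. Hence each block converges in law to the corresponding limit $\bigl(\frac{1}{\sqrt{12}}\mathcal B_i,\frac{1}{\sqrt{12}}\tilde{\mathcal B}_i,W_i,\tilde W_i\bigr)$ for $i\in\{1,2\}$. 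For the cross-block, every entry pairs a Wiener integral against $W_1$ with one against $W_2$; since $W_1\perp W_2$, the isometry across independent drivers makes each such covariance identically zero for every finite $n$, and the same entries vanish in the limit because $\mathcal B_1,\mathcal B_2,W_1,W_2$ are mutually independent. Thus the block-diagonal covariance structure is preserved under the limit.

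Finally I would assemble the pieces. The full vector decomposes into a $\sigma(W_1)$-measurable sub-vector and a $\sigma(W_2)$-measurable sub-vector, which are independent for every $n$; each converges in distribution to its stated limit, and the two limits are themselves independent. The joint convergence then follows from the standard fact that independent, separately convergent sequences converge jointly to the product of the limits. The only point requiring a word of care is the simultaneous vanishing of the finite-$n$ cross-covariances and the matching independence of the limiting components, but both are immediate consequences of $W_1\perp W_2$; there is no genuine obstacle beyond the bookkeeping already handled for the single-process lemma.
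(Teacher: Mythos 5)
Your proposal is correct and matches the paper's (implicit) argument: the paper simply states that ``the same steps'' as in Lemma~\ref{lem:weak_conv1} yield this result, namely joint Gaussianity plus tightness reduces everything to convergence of covariances, with all cross-covariances between the $W_1$-block and the $W_2$-block vanishing identically by independence. Your packaging of the final assembly as a product of two independent, separately convergent blocks is a clean and valid way to organize exactly this reduction, so no gap remains.
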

Recall the functionals defined in \eqref{def:functionals}. For simplicity of notation, we define
\begin{gather}
\label{notation:covariance}
    X^{i,i}=D(W_i),\quad X_{n}^{i,i}=D_n(W_i),\quad\text{for }i=1, 2,\\
    X^{1,2}=A(W_1,W_2),\quad X^{1,2}_n=A_n(W_1,W_2).
\end{gather}
As in \eqref{eq:s1} and \eqref{eq:sdecom}, one can verify that, for $i\in\{1,2\}$,
\begin{equation}
\begin{aligned}
    n(X^{i,i}_n-X^{i,i})&=2n\int_0^1\int_0^t(M_n(s,t)-M(s,t))dW_i(s)dW_i(t)+\frac{1}{6n}.
\end{aligned}
\end{equation}
One can also obtain similar expression for $n(X^{1,2}_n-X^{1,2})$:
\begin{equation}
\label{eq:sxn12}
\begin{aligned}
        n(X^{1,2}_n-X^{1,2})&=n\int_0^1\int_0^t(M_n(s,t)-M(s,t))dW_1(s)dW_2(t)\\
        &\quad+n\int_0^1\int_0^t(M_n(s,t)-M(s,t))dW_2(s)dW_1(t).
\end{aligned}
\end{equation}
\begin{lemma}
\label{lem:sconj6_indep}
    Let $U=(X^{1,2},X^{1,1},X^{2,2})^{\mathrm T}$ and $U_n=(X^{1,2}_n,X^{1,1}_n,X^{2,2}_n)^{\mathrm T}$. If $W_1$ and $W_2$ are independent, then $n(U_n-U)=V+Z_n$ and $(Z_n,W_1,W_2)\to (Z,W_1,W_2)$ in distribution, where
    \begin{equation}
    \begin{aligned}
        V
        &=\begin{pmatrix}
            \frac{1}{2}W_1(1)\bar W_2+\frac{1}{2}W_2(1)\bar W_1-\frac{1}{2}W_1(1)W_2(1)\\
            W_1(1)\bar W_1-\frac{1}{2}W_1(1)^2\\
            W_2(1)\bar W_2-\frac{1}{2}W_2(1)^2
        \end{pmatrix},
    \end{aligned}
    \end{equation}
    \begin{equation}
        Z=\begin{pmatrix}
            \frac{1}{\sqrt{12}}\int_0^1(\bar W_1-W_1(t))d{\mathcal{B}}_2(t)+\frac{1}{\sqrt{12}}\int_0^1(\bar W_2-W_2(t))d{\mathcal{B}}_1(t)\\
            \frac{1}{\sqrt{3}}\int_0^1(\bar W_1-W_1(t))d{\mathcal{B}}_1(t)\\
            \frac{1}{\sqrt{3}}\int_0^1(\bar W_2-W_2(t))d{\mathcal{B}}_2(t)
        \end{pmatrix},
    \end{equation}where $\bar W_i = \int_0^1W_i(t)dt$, $i\in\{1,2\}$, and $\mathcal B_i$, $i\in\{1,2\}$, are independent Brownian motions.
Moreover, conditional on $(W_1,W_2)$, $Z\sim\mathcal N(0,\Sigma)$, where
\begin{equation}
    \Sigma=\frac{1}{12}\begin{pmatrix}
        X^{1,1}+X^{2,2}&2X^{1,2}&2X^{1,2}\\
        2X^{1,2}&4X^{1,1}&0\\
        2X^{1,2}&0&4X^{2,2}
    \end{pmatrix}.
\end{equation}
\end{lemma}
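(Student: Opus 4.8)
The plan is to handle the three components of $n(U_n-U)$ one at a time, splitting each into an \emph{exact} part that becomes the corresponding entry of $V$ and a fluctuating part that converges to the corresponding entry of $Z$. For the diagonal entries $n(X^{i,i}_n-X^{i,i})$ I would start from the single–Brownian–motion identities \eqref{eq:s2}. Adding the leading chaos pieces of $nD_n^1$ and $nD_n^2$ and using $1-s\vee t-s\wedge t=1-s-t$ gives the fixed second–chaos variable $\tfrac12 I_2(1-s-t)$; a short Itô computation (integration by parts on $\int_0^1 tW_i\,dW_i$ and on $\int_0^1\widehat W_i\,dW_i$ with $\widehat W_i(t)=\int_0^t W_i$) shows this equals exactly $W_i(1)\bar W_i-\tfrac12 W_i(1)^2$, the $i$-th diagonal entry of $V$. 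What remains, namely $2\int_0^1(1-t)P_n(t)\,dW_i(t)-2\int_0^1(1-t)Q_n(t)\,d\tilde W_i(t)+nD_n^3$, forms the fluctuation. For the off-diagonal entry I would run the same decomposition on the two iterated integrals in \eqref{eq:sxn12}, obtaining $\tfrac12 W_1(1)\bar W_2+\tfrac12 W_2(1)\bar W_1-\tfrac12 W_1(1)W_2(1)$ for the first entry of $V$ together with cross fluctuation integrals built from $P_n,R_n$ and $Q_n,S_n$.

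Next I would pass to the limit in the fluctuation integrals. Since $nD_n^3\to0$ in $L^2$ it is negligible. The core issue is the convergence of integrals such as $\int_0^1(1-t)P_n(t)\,dW_1(t)$: even though $P_n$ is built from $W_1$, the oscillating kernel $\lceil ns\rceil-ns-\tfrac12$ has vanishing mean on each mesh cell, so the cross-bracket $\langle P_n,W_1\rangle_t=\int_0^t(\lceil ns\rceil-ns-\tfrac12)\,ds\to0$, and by the preceding (eight–dimensional) analogue of Lemma~\ref{lem:weak_conv1} the integrand converges jointly with the integrator to $\tfrac{1}{\sqrt{12}}\mathcal B_1$, which is \emph{independent} of $W_1$. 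I would then invoke a weak-convergence-of-stochastic-integrals theorem of Kurtz–Protter type, whose hypotheses hold because the integrators are the fixed Brownian motions $W_i,\tilde W_i$ (a good/UCV sequence) and the limit integrand is adapted, to conclude that each fluctuation integral converges, jointly with $(W_1,W_2)$, to the same integral against the independent limiting process, e.g. $\tfrac{1}{\sqrt{12}}\int_0^1(1-t)\mathcal B_1(t)\,dW_1(t)$.

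It then remains to identify these limits with the entries of the stated $Z$. For the diagonal entry I would combine $\tfrac{2}{\sqrt{12}}\int_0^1(1-t)\mathcal B_1(t)\,dW_1(t)$ and $-\tfrac{2}{\sqrt{12}}\int_0^1(1-t)\tilde{\mathcal B}_1(t)\,d\tilde W_1(t)$; using $\tfrac{2}{\sqrt{12}}=\tfrac{1}{\sqrt3}$, an integration by parts (legitimate since $\mathcal B_1\perp W_1$ kills the cross-bracket) together with the time-reversal relations $\tilde W_1(t)=W_1(1-t)-W_1(1)$, $\tilde{\mathcal B}_1(t)=\mathcal B_1(1-t)-\mathcal B_1(1)$ and the substitution $u=1-t$ should collapse the two pieces into $\tfrac{1}{\sqrt3}\int_0^1(\bar W_1-W_1(t))\,d\mathcal B_1(t)$; the parallel manipulation on the cross term yields the first entry of $Z$. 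I expect the main obstacle to sit exactly here: the time reversal converts forward Itô integrals into backward ones, so one must track the finite-variation correction terms carefully so that the spurious $\int_0^1 W_1\mathcal B_1\,dt$-type terms cancel and only the clean Wiener integral against $\mathcal B_1$ survives, with integrand measurable with respect to $(W_1,W_2)$.

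Finally, the conditional law follows at once from this representation. Conditionally on $(W_1,W_2)$ each entry of $Z$ is a Wiener integral against $\mathcal B_1$ and/or $\mathcal B_2$, which are independent of $(W_1,W_2)$, with integrands that are deterministic given the paths; hence $Z$ is conditionally centered Gaussian and its conditional covariance is the matrix of $L^2([0,1])$ inner products of those integrands. Using $\int_0^1(\bar W_i-W_i)^2\,dt=D(W_i)=X^{i,i}$, $\int_0^1(\bar W_1-W_1)(\bar W_2-W_2)\,dt=A(W_1,W_2)=X^{1,2}$, and the orthogonality of $\mathcal B_1,\mathcal B_2$, one reads off all entries directly, e.g. $\mathrm{Var}(Z_2\mid W)=\tfrac13 X^{1,1}$, $\mathrm{Cov}(Z_1,Z_2\mid W)=\tfrac16 X^{1,2}$, and $\mathrm{Cov}(Z_2,Z_3\mid W)=0$, recovering the claimed $\Sigma$.
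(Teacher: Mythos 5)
Your proposal is correct and follows essentially the same route as the paper's own proof: the identical decomposition of $n(U_n-U)$ into the It\^o-identified drift $V$ plus fluctuation integrals in $P_n,Q_n,R_n,S_n$, the passage to the limit via the joint weak convergence of the integrands combined with a Kurtz--Protter theorem on weak convergence of stochastic integrals (the paper cites Theorem 7.10 of Kurtz--Protter), the collapse of the limit via time reversal and integration by parts (valid because $\mathcal B_i\perp W_j$ kills the cross-bracket) into Wiener integrals against $d\mathcal B_i$ with path-functional integrands, and the conditional covariance read off by the It\^o isometry. Your sample entries of $\Sigma$ (e.g.\ $\tfrac13 X^{1,1}$, $\tfrac16 X^{1,2}$, $0$) all match the stated matrix, so no gap remains.
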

\begin{remark}
    Note that $W_1$ and $W_2$ are Wiener paths we observe, and $n(U_n-U)$ can be computed explicitly based on $W_1$ and $W_2$ for every fixed $n$; $\mathcal B_1$ and $\mathcal B_2$ are standard Wiener processes independent of $W_1$ and $W_2$ that only appear in the limit in distribution as $n\to\infty$.
\end{remark}
\begin{proof}
    As shown in \eqref{eq:sdecom} and \eqref{eq:s2}, we can decompose $n(X^{1,1}_n-X^{1,1})$ and $n(X^{2,2}_n-X^{2,2})$ in the following way
    \begin{equation}
    \begin{aligned}
        n(X^{1,1}_n-X^{1,1})&=\int_0^1\int_0^t(1-s-t)dW_1(s)dW_1(t)+2\int_0^1(1-t)P_n(t)dW_1(t)\\
        &\quad-2\int_0^1(1-t)Q_n(t)d\tilde W_1(t)+E_n^{1,1},\\
        n(X^{2,2}_n-X^{2,2})&=\int_0^1\int_0^t(1-s-t)dW_2(s)dW_2(t)+2\int_0^1(1-t)R_n(t)dW_2(t)\\
        &\quad-2\int_0^1(1-t)S_n(t)d\tilde W_2(t)+E_n^{2,2},
    \end{aligned}
    \end{equation}where $E^{i,i}_n=-2\int_0^1\int_0^t\frac{1}{n}(\lceil nt\rceil-nt)(\lceil ns\rceil-ns)dW_i(s)dW_i(t)+\frac{1}{6n}\to0$ in $L^2$, $i\in\{1,2\}$. 
    On the other hand, $n(X^{1,2}_n-X^{1,2})$ has a different expression. From \eqref{eq:sxn12}, we obtain
        \begin{align*}
            &n(X^{1,2}_n-X^{1,2})\\
            &=\int_0^1(1-t)\int_0^t(\lceil ns\rceil-ns)dW_1(s)dW_2(t)+\int_0^1(1-t)\int_0^t(\lceil ns\rceil-ns)dW_2(s)dW_1(t)\\
            &\quad-\int_0^1\int_0^t(\lceil nt\rceil-nt)sdW_1(s)dW_2(t)-\int_0^1\int_0^t(\lceil nt\rceil-nt)sdW_2(s)dW_1(t)\\
            &\quad-\int_0^1\int_0^t\frac{1}{n}(\lceil nt\rceil-nt)(\lceil ns\rceil-ns)dW_1(s)dW_2(t)-\int_0^1\int_0^t\frac{1}{n}(\lceil nt\rceil-nt)(\lceil ns\rceil-ns)dW_2(s)dW_1(t)\\
            &=\frac{1}{2}\int_0^1(1-t)W_1(t)dW_2(t)+\int_0^1(1-t)P_n(t)dW_2(t)\\
            &\quad+\frac{1}{2}\int_0^1(1-t)W_2(t)dW_1(t)+\int_0^1(1-t)R_n(t)dW_1(t)\\
            &\quad-\int_0^1\int_s^1(\lceil nt\rceil-nt)dW_2(t)sdW_1(s)-\int_0^1\int_s^1(\lceil nt\rceil-nt)dW_1(t)sdW_2(s)+E^{1,2}_n\\
            &=\frac{1}{2}\int_0^1(1-t)W_1(t)dW_2(t)+\int_0^1(1-t)P_n(t)dW_2(t)\\
            &\quad+\frac{1}{2}\int_0^1(1-t)W_2(t)dW_1(t)+\int_0^1(1-t)R_n(t)dW_1(t)\\
            &\quad+\frac{1}{2}\int_0^1 t(W_2(t)-W_2(1))dW_1(t)+\frac{1}{2}\int_0^1 t(W_1(t)-W_1(1))dW_2(t)\\
            &\quad+\int_0^1S_n(1-s)sdW_1(s)+\int_0^1Q_n(1-s)sdW_2(s)+E^{1,2}_n\\
            &=\frac{1}{2}\int_0^1(W_1(t)-tW_1(1))dW_2(t)+\frac{1}{2}\int_0^1(W_2(t)-tW_2(1))dW_1(t)\\
            &\quad+\int_0^1((1-t)P_n(t)+tQ_n(1-t))dW_2(t)+\int_0^1((1-t)R_n(t)+tS_n(1-t))dW_1(t)+E^{1,2}_n,
        \end{align*}where
        \begin{equation}
        \begin{aligned}
            E^{1,2}_n&=-\int_0^1\int_0^t\frac{1}{n}(\lceil nt\rceil-nt)(\lceil ns\rceil-ns)dW_1(s)dW_2(t)\\
            &\quad-\int_0^1\int_0^t\frac{1}{n}(\lceil nt\rceil-nt)(\lceil ns\rceil-ns)dW_2(s)dW_1(t)
        \end{aligned}
        \end{equation}converges to zero in $L^2$.
    Now let us prove that $V$ has the desired form.
    For $i\in\{1,2\}$, define $Y_t^i = \frac{1}{2}(1-2t)W_i(t)^2$. By It\^o's formula,
    \begin{equation}
        \begin{aligned}
            dY_t^i&=(1-2t)W_i(t)dW_i(t)+\frac{1}{2}(1-2t-2W_i(t)^2)dt.
        \end{aligned}
    \end{equation}
    Thus, 
    \begin{equation}
        \begin{aligned}
            \int_0^1(1-2t)W_i(t)dW_i(t)&=Y^i_1-Y_0^i-\frac{1}{2}\int_0^1(1-2t-2W_i(t)^2)dt\\
            &=-\frac{1}{2}W_i(1)^2+\int_0^1W_i(t)^2dt.
        \end{aligned}
    \end{equation}
    By integration by parts, 
        \begin{align}
            &\int_0^1\int_0^t(1-s-t)dW_i(s)dW_i(t)\\
            &=\int_0^1\left[(1-t)W_i(t)-\int_0^t sdW_i(s)\right]dW_i(t)\\
            &=\int_0^1\left[(1-2t)W_i(t)+\int_0^tW_i(s)ds\right]dW_i(t)\\
            &=-\frac{1}{2}W_i(1)^2+\int_0^1W_i(t)^2dt+W_i(1)\int_0^1W_i(t)dt-\int_0^1W_i(t)^2dt\\
            &=-\frac{1}{2}W_i(1)^2+W_i(1)\int_0^1W_i(t)dt.
        \end{align}
    That proves the second and the third components of $V$. 
    For the first component, we can see that
    \begin{equation}
        \begin{aligned}
            &\frac{1}{2}\int_0^1(W_1(t)-tW_1(1))dW_2(t)+\frac{1}{2}\int_0^1(W_2(t)-tW_2(1))dW_1(t)\\
            &=\frac{1}{2}\left(\int_0^1W_1(t)dW_2(t)+\int_0^1W_2(t)dW_1(t)\right)-\frac{1}{2}W_1(1)\int_0^1tdW_2(t)-\frac{1}{2}W_2(1)\int_0^1tdW_1(t)\\
            &=\frac{1}{2}W_1(1)W_2(1)-\frac{1}{2}W_1(1)W_2(1)+\frac{1}{2}W_1(1)\int_0^1W_2(t)dt-\frac{1}{2}W_1(1)W_2(1)+\frac{1}{2}W_2(1)\int_0^1W_1(t)dt\\
            &=\frac{1}{2}W_1(1)\int_0^1W_2(t)dt+\frac{1}{2}W_2(1)\int_0^1W_1(t)dt-\frac{1}{2}W_1(1)W_2(1).
        \end{aligned}
    \end{equation}
    So far, we proved that $n(U_n-U)=V+Z_n$, where 
    \begin{equation}
        Z_n=\begin{pmatrix}
            \int_0^1((1-t)P_n(t)+tQ_n(1-t))dW_2(t)+\int_0^1((1-t)R_n(t)+tS_n(1-t))dW_1(t)\\
            2\int_0^1(1-t)P_n(t)dW_1(t)-2\int_0^1(1-t)Q_n(t)d\tilde W_1(t)\\
            2\int_0^1(1-t)R_n(t)dW_2(t)-2\int_0^1(1-t)S_n(t)d\tilde W_2(t)
        \end{pmatrix}+
        \begin{pmatrix}
            E^{1,2}_n\\
            E^{1,1}_n\\
            E^{2,2}_n
        \end{pmatrix},
    \end{equation}where to last term converges to $0$ in $L^2$.
    By Theorem  7.10 in \cite{Kurtz1996}, $(Z_n,W_1,W_2)$ converges weakly to $(Z,W_1,W_2)$ where 
    \begin{align*}
        Z&=\begin{pmatrix}
            \frac{1}{\sqrt{12}}\int_0^1 ((1-t){\mathcal{B}}_1(t)+t\tilde {\mathcal{B}}_1(1-t))dW_2(t)+
            \frac{1}{\sqrt{12}}\int_0^1 ((1-t){\mathcal{B}}_2(t)+t\tilde {\mathcal{B}}_2(1-t))dW_1(t)\\
            \frac{1}{\sqrt{3}}\int_0^1(1-t){\mathcal{B}}_1(t)dW_1(t)-\frac{1}{\sqrt{3}}\int_0^1(1-t)\tilde {\mathcal{B}}_1(t)d\tilde W_1(t)\\
            \frac{1}{\sqrt{3}}\int_0^1(1-t){\mathcal{B}}_2(t)dW_2(t)-\frac{1}{\sqrt{3}}\int_0^1(1-t)\tilde {\mathcal{B}}_2(t)d\tilde W_2(t)
        \end{pmatrix}\\
        &=\begin{pmatrix}
            \frac{1}{\sqrt{12}}\int_0^1({\mathcal{B}}_1(t)-t{\mathcal{B}}_1(1))dW_2(t)+\frac{1}{\sqrt{12}}\int_0^1({\mathcal{B}}_2(t)-t{\mathcal{B}}_2(1))dW_1(t)\\
            \frac{1}{\sqrt{3}}\int_0^1({\mathcal{B}}_1(t)-t{\mathcal{B}}_1(1))dW_1(t)\\
            \frac{1}{\sqrt{3}}\int_0^1({\mathcal{B}}_2(t)-t{\mathcal{B}}_2(1))dW_2(t)
        \end{pmatrix}\\
        &=\begin{pmatrix}
            \frac{1}{\sqrt{12}}\int_0^1(\int_0^1W_1(s)ds-W_1(t))d{\mathcal{B}}_2(t)+\frac{1}{\sqrt{12}}\int_0^1(\int_0^1W_2(s)ds-W_2(t))d{\mathcal{B}}_1(t)\\
            \frac{1}{\sqrt{3}}\int_0^1(\int_0^1W_1(s)ds-W_1(t))d{\mathcal{B}}_1(t)\\
            \frac{1}{\sqrt{3}}\int_0^1(\int_0^1W_2(s)ds-W_2(t))d{\mathcal{B}}_2(t)
        \end{pmatrix},
    \end{align*}
    where the last equality is due to integration by parts: for $i, j\in\{1,2\}$,
    \begin{equation}
        \begin{aligned}
            &\int_0^1({\mathcal{B}}_i(t)-t{\mathcal{B}}_i(1))dW_j(t)\\
            &=\int_0^1{\mathcal{B}}_i(t)dW_j(t)-\mathcal{B}_i(1)\int_0^1 tdW_j(t)\\
            &=\mathcal B_i(1)W_j(1)-\int_0^1W_j(t)d\mathcal B_i(t)-\mathcal B_i(1)W_j(1)+\mathcal B_i(1)\int_0^1W_j(t)dt\\
            &=\int_0^1\left(\int_0^1W_j(s)ds-W_j(t)\right)d{\mathcal{B}}_i(t).
        \end{aligned}
    \end{equation}
    Then, the desired covariance matrix conditional on $W_1$ and $W_2$ follows from the It\^o isometry.
    \end{proof}
\subsubsection{Covariance: general case}
\label{ssec:dep}
    Now we are ready to consider the general case formulated in Theorem~\ref{sthm:cov}.
    Since we have two standard Brownian motions $\mathcal W_1$ and $\mathcal W_2$ which are jointly Gaussian with constant correlation coefficient $r$, $\mathcal W_1$ and $\frac{\mathcal W_2-r\mathcal W_1}{\sqrt{1-r^2}}$ are two independent Brownian motions, and Lemma~\ref{lem:sconj6_indep} can be applied.
Recall that the empirical covariance can be represented as
\begin{gather}
    \mathcal X^{i,i}=D(\mathcal W_i),\quad \mathcal X_{n}^{i,i}=D_n(\mathcal W_i),\quad\text{for }i=1, 2,\\
    \mathcal X^{1,2}=A(\mathcal W_1,\mathcal W_2),\quad \mathcal X^{1,2}_n=A_n(\mathcal W_1,\mathcal W_2).
\end{gather}
Let $W_1 = \mathcal W_1$ and $W_2=\frac{\mathcal W_1-r\mathcal W_2}{\sqrt{1-r^2}}$. Recalling the notation in \eqref{notation:covariance}, then, by linearity of integration and summation,
\begin{equation}
\label{eq:slinearity}
    \begin{pmatrix}
        \mathcal X_n^{1,2}-\mathcal X^{1,2}\\
        \mathcal X_n^{1,1}-\mathcal X^{1,1}\\
        \mathcal X_n^{2,2}-\mathcal X^{2,2}
    \end{pmatrix}=\begin{pmatrix}
        \sqrt{1-r^2}&r&0\\
        0&1&0\\
        2r\sqrt{1-r^2}&r^2&1-r^2
    \end{pmatrix}\cdot
    \begin{pmatrix}
        X_n^{1,2}-X^{1,2}\\
        X_n^{1,1}-X^{1,1}\\
        X_n^{2,2}-X^{2,2}
    \end{pmatrix}=:A_r(U_n-U).
\end{equation}
\begin{proposition}
    \label{sprop:conj6}
        Let $\mathcal U=(\mathcal X^{1,2},\mathcal X^{1,1},\mathcal X^{2,2})$ and $\mathcal U_n=(\mathcal X^{1,2}_n,\mathcal X^{1,1}_n,\mathcal X^{2,2}_n)$. If $\mathcal W_1$ and $\mathcal W_2$ are jointly Gaussian with constant coefficient $r$, then $n(\mathcal U_n-\mathcal U)=\mathcal V+\mathcal Z_n$ and $(\mathcal Z_n,\mathcal W_1,\mathcal W_2)\to (\mathcal Z,\mathcal W_1,\mathcal W_2)$ in distribution, where
    \begin{gather}
        \mathcal V=\begin{pmatrix}
            \frac{1}{2}\mathcal W_1(1)\bar{\mathcal W}_2+\frac{1}{2}\mathcal W_2(1)\bar{\mathcal W}_1-\frac{1}{2}\mathcal W_1(1)\mathcal W_2(1)\\
            \mathcal W_1(1)\bar{\mathcal W}_1-\frac{1}{2}\mathcal W_1(1)^2\\
            \mathcal W_2(1)\bar{\mathcal W}_2-\frac{1}{2}\mathcal W_2(1)^2
        \end{pmatrix},\\
        \mathcal Z=\begin{pmatrix}
            \frac{\sqrt{1-r^2}}{\sqrt{12}}\int_0^1(\bar {\mathcal W}_1-{\mathcal W}_1(t))d{\mathcal{B}}_2(t)+\frac{r}{\sqrt{12}}\int_0^1(\bar{\mathcal W}_1-{\mathcal W}_1(t))d\mathcal B_1(t)+\frac{1}{\sqrt{12}}\int_0^1(\bar W_2-W_2(t))d{\mathcal{B}}_1(t)\\
            \frac{1}{\sqrt{3}}\int_0^1(\bar{\mathcal W}_1-{\mathcal W}_1(t))d{\mathcal{B}}_1(t)\\
            \frac{\sqrt{1-r^2}}{\sqrt{3}}\int_0^1(\bar {\mathcal W}_2-{\mathcal W}_2(t))d{\mathcal{B}}_2(t)+\frac{r}{\sqrt{3}}\int_0^1(\bar {\mathcal W}_2-{\mathcal W}_2(t))d{\mathcal{B}}_1(t)
        \end{pmatrix},
    \end{gather}where $\bar{\mathcal W}_i=\int_0^1\mathcal W_i(t)dt$,
    and, conditional on $(\mathcal W_1,\mathcal W_2)$, $\mathcal Z\sim \mathcal N(0,\Sigma_r)$, where
    \begin{equation}
        \Sigma_r=\frac{1}{12}\begin{pmatrix}
            \mathcal X^{1,1}+\mathcal X^{2,2}+2r \mathcal X^{1,2}&2\mathcal X^{1,2}+2r \mathcal X^{1,1}& 2\mathcal X^{1,2}+2r \mathcal X^{2,2}\\
            2\mathcal X^{1,2}+2r \mathcal X^{1,1}&4\mathcal X^{1,1}&4r \mathcal X^{1,2}\\
            2\mathcal X^{1,2}+2r \mathcal X^{2,2}&4r \mathcal X^{1,2}& 4\mathcal X^{2,2}
        \end{pmatrix}.
    \end{equation}
    \end{proposition}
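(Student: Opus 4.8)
The plan is to deduce Proposition~\ref{sprop:conj6} from the independent case already settled in Lemma~\ref{lem:sconj6_indep}, via the deterministic linear change of variables that decorrelates the pair. Set $W_1=\mathcal W_1$ and $W_2=(\mathcal W_2-r\mathcal W_1)/\sqrt{1-r^2}$; then $(W_1,W_2)$ is a pair of independent standard Brownian motions, and conversely $\mathcal W_1=W_1$, $\mathcal W_2=rW_1+\sqrt{1-r^2}\,W_2$, so that $(\mathcal W_1,\mathcal W_2)$ is a fixed invertible linear image of $(W_1,W_2)$. Because $D$ and $A$ (and likewise $D_n$ and $A_n$) are quadratic/bilinear in their path arguments, both the continuous vector $\mathcal U$ and the discrete vector $\mathcal U_n$ are produced from $U$ and $U_n$ by one and the same deterministic matrix $A_r$ recorded in \eqref{eq:slinearity}; subtracting, $n(\mathcal U_n-\mathcal U)=A_r\,n(U_n-U)$.

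First I would invoke Lemma~\ref{lem:sconj6_indep}, which gives $n(U_n-U)=V+Z_n$ with $(Z_n,W_1,W_2)\to(Z,W_1,W_2)$ in distribution and $Z\mid(W_1,W_2)\sim\mathcal N(0,\Sigma)$. Applying $A_r$ yields the decomposition $n(\mathcal U_n-\mathcal U)=A_rV+A_rZ_n=:\mathcal V+\mathcal Z_n$. Since $A_r$ is a constant matrix and $(\mathcal W_1,\mathcal W_2)$ is a continuous (indeed linear) function of $(W_1,W_2)$, the continuous mapping theorem applied to $(z,w_1,w_2)\mapsto(A_rz,\,w_1,\,rw_1+\sqrt{1-r^2}\,w_2)$ upgrades the convergence to $(\mathcal Z_n,\mathcal W_1,\mathcal W_2)\to(\mathcal Z,\mathcal W_1,\mathcal W_2)$ with $\mathcal Z=A_rZ$. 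Conditioning on $(W_1,W_2)$ is the same as conditioning on $(\mathcal W_1,\mathcal W_2)$, so conditionally $\mathcal Z$ is centered Gaussian with covariance $A_r\Sigma A_r^{\mathrm T}$.

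It then remains to identify the three explicit objects. For the drift, note that $\bm\mu(f,g)$ in \eqref{def:musigma} has entries $(B(f,g),B(f,f),B(g,g))$ for the symmetric bilinear form $B(f,g)=\tfrac12 f(1)\bar g+\tfrac12 g(1)\bar f-\tfrac12 f(1)g(1)$, which is structurally identical to the covariance vector $(A(f,g),D(f),D(g))=(A(f,g),A(f,f),A(g,g))$; bilinearity therefore forces $\bm\mu$ to transform under the same matrix $A_r$, so $\mathcal V=A_rV=A_r\bm\mu(W_1,W_2)=\bm\mu(\mathcal W_1,\mathcal W_2)$, exactly the stated vector. For the noise, I would substitute $W_1=\mathcal W_1$ and $W_2=(\mathcal W_2-r\mathcal W_1)/\sqrt{1-r^2}$ into the components of $Z$ and collect the Wiener integrals against the same exogenous $\mathcal B_1,\mathcal B_2$, reproducing the displayed $\mathcal Z$. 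For the conditional covariance, I would evaluate $A_r\Sigma A_r^{\mathrm T}$ and re-express the entries $X^{1,1},X^{1,2},X^{2,2}$ in the $\mathcal X^{i,j}$ coordinates by inverting \eqref{eq:slinearity}, namely $X^{1,1}=\mathcal X^{1,1}$, $X^{1,2}=(\mathcal X^{1,2}-r\mathcal X^{1,1})/\sqrt{1-r^2}$ and $X^{2,2}=(\mathcal X^{2,2}+r^2\mathcal X^{1,1}-2r\mathcal X^{1,2})/(1-r^2)$, to arrive at $\Sigma_r$.

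The main obstacle is purely computational and is concentrated in the last step: one must verify the matrix identity $A_r\Sigma A_r^{\mathrm T}=\Sigma_r$ after the substitution, checking in particular that the $1/(1-r^2)$ factors introduced by inverting \eqref{eq:slinearity} cancel against the $\sqrt{1-r^2}$ and $(1-r^2)$ factors carried by $A_r$, so that every entry of $\Sigma_r$ is a genuine polynomial in $r$ and the $\mathcal X^{i,j}$. Everything structural (tightness, joint weak convergence, and conditional Gaussianity) is inherited directly from Lemma~\ref{lem:sconj6_indep} through the deterministic linear map, so no new probabilistic input is needed.
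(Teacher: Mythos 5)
Your proposal is correct and follows essentially the same route as the paper: decorrelate via $W_1=\mathcal W_1$, $W_2=(\mathcal W_2-r\mathcal W_1)/\sqrt{1-r^2}$, apply Lemma~\ref{lem:sconj6_indep}, and push the decomposition through the matrix $A_r$ of \eqref{eq:slinearity} to get $\mathcal V=A_rV$, $\mathcal Z_n=A_rZ_n$, $\mathcal Z=A_rZ$. The only cosmetic difference is that the paper obtains $\Sigma_r$ by substituting $\mathcal W_1,\mathcal W_2$ into the explicit integral form of $\mathcal Z$ and applying the It\^o isometry, whereas you evaluate $A_r\Sigma A_r^{\mathrm T}$ and invert \eqref{eq:slinearity}; these are equivalent computations and both are valid.
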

\begin{proof}
    By \eqref{eq:slinearity} and Lemma~\ref{lem:sconj6_indep}, we have $n(\mathcal U_n-\mathcal U)=nA_r(U_n-U)=A_r(V+Z_n)$.
    Again, by linearity of integration and summation, $\mathcal V=A_r V$. Define $\mathcal Z_n=A_rZ_n$ and $\mathcal Z=A_rZ$. 
    It follows that $n(\mathcal U_n-\mathcal U)=\mathcal V+\mathcal Z_n$ and $(\mathcal Z_n,\mathcal W_1,\mathcal W_2)\to (\mathcal Z,\mathcal W_1,\mathcal W_2)$ in distribution.
    We compute $\mathcal Z$ explicitly:
    \begin{align}
            \mathcal Z_1&=\frac{\sqrt{1-r^2}}{\sqrt{12}}\int_0^1\left(\int_0^1W_1(s)ds-W_1(t)\right)d{\mathcal{B}}_2(t)+\frac{\sqrt{1-r^2}}{\sqrt{12}}\int_0^1\left(\int_0^1W_2(s)ds-W_2(t)\right)d{\mathcal{B}}_1(t)\\
            &\quad+\frac{r}{\sqrt{3}}\int_0^1\left(\int_0^1W_1(s)ds-W_1(t)\right)d{\mathcal{B}}_1(t)\\
            &=\frac{\sqrt{1-r^2}}{\sqrt{12}}\int_0^1\left(\int_0^1\mathcal W_1(s)ds-\mathcal W_1(t)\right)d{\mathcal{B}}_2(t)+\frac{r}{\sqrt{12}}\int_0^1\left(\int_0^1\mathcal W_1(s)ds-\mathcal W_1(t)\right)d{\mathcal{B}}_1(t)\\
            &\quad+\frac{1}{\sqrt{12}}\int_0^1\left(\int_0^1\mathcal W_2(s)ds-\mathcal W_2(t)\right)d{\mathcal{B}}_1(t),\\
            \mathcal Z_2&=\frac{1}{\sqrt{3}}\int_0^1\left(\int_0^1\mathcal W_1(s)ds-\mathcal W_1(t)\right)d{\mathcal{B}}_1(t),\\
            \mathcal Z_3&=\frac{r\sqrt{1-r^2}}{\sqrt{3}}\int_0^1\left(\int_0^1W_1(s)ds-W_1(t)\right)d{\mathcal{B}}_2(t)+\frac{r\sqrt{1-r^2}}{\sqrt{3}}\int_0^1\left(\int_0^1W_2(s)ds-W_2(t)\right)d{\mathcal{B}}_1(t)\\
            &\quad+\frac{r^2}{\sqrt{3}}\int_0^1\left(\int_0^1 W_1(s)ds-W_1(t)\right)d{\mathcal{B}}_1(t)+\frac{1-r^2}{\sqrt{3}}\int_0^1\left(\int_0^1W_2(s)ds-W_2(t)\right)d{\mathcal{B}}_2(t)\\
            &=\frac{\sqrt{1-r^2}}{\sqrt{3}}\int_0^1\left(\int_0^1\mathcal W_2(s)ds-\mathcal W_2(t)\right)d{\mathcal{B}}_2(t)+\frac{r}{\sqrt{3}}\int_0^1\left(\int_0^1\mathcal W_2(s)ds-\mathcal W_2(t)\right)d{\mathcal{B}}_1(t).
    \end{align}
    The desired covariance matrix $\Sigma_r$ conditional on $\mathcal W_1$ and $\mathcal W_2$ follows from the It\^o isometry.
\end{proof}

\subsubsection{Empirical correlation}
\label{ssec:corr}
Let $\mathcal U$, $\mathcal U_n$, $\mathcal V$, and $\mathcal Z$ be defined as in Proposition~\ref{sprop:conj6}. 
Recall that $F(a,b,c)=a/\sqrt{bc}$, $\rho=F(\mathcal U)$, and $\rho_n=F(\mathcal U_n)$.
    \begin{proposition}
    \label{sprop:conj7}
    As $n\to\infty$, $(n(\rho_n-\rho)-\nabla F(\mathcal U)\cdot \mathcal V,\mathcal W_1,\mathcal W_2)\to(\nabla F(\mathcal U)\cdot \mathcal Z,\mathcal W_1,\mathcal W_2)$ in distribution, and, conditional on $(\mathcal W_1,\mathcal W_2)$, $$\nabla F(\mathcal U)\cdot \mathcal Z\sim\mathcal N\left(0,\frac{(\mathcal X^{1,1}\mathcal X^{2,2}-(\mathcal X^{1,2})^2)(\mathcal X^{1,1}+\mathcal X^{2,2}-2r\mathcal X^{1,2})}{12(\mathcal X^{1,1}\mathcal X^{2,2})^2}\right).$$
\end{proposition}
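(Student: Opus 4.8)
The plan is to treat the passage from covariance to correlation as an instance of the delta method, where the two features that deserve care are that the normalization is $n$ rather than $\sqrt n$, and that the limiting object is conditionally, rather than unconditionally, Gaussian. Write $a=\mathcal X^{1,2}$, $b=\mathcal X^{1,1}$, $c=\mathcal X^{2,2}$ so that $\mathcal U=(a,b,c)$ and $F(a,b,c)=a/\sqrt{bc}$. First I would record that $b,c>0$ almost surely (the paths $\mathcal W_i$ are a.s.\ non-constant, so the empirical variances $D(\mathcal W_i)$ are strictly positive), so $F$ is smooth in a neighborhood of $\mathcal U$ and $\nabla F(\mathcal U)$ is well defined. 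The starting point is the exact identity $\mathcal U_n-\mathcal U=\tfrac1n(\mathcal V+\mathcal Z_n)$ supplied by Proposition~\ref{sprop:conj6}; since $\mathcal Z_n$ converges in distribution and $\mathcal V$ is a.s.\ finite, $\mathcal V+\mathcal Z_n$ is tight, whence $\mathcal U_n\to\mathcal U$ in probability.

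Next I would Taylor expand $F$ to second order along the segment joining $\mathcal U$ to $\mathcal U_n$:
\[
\rho_n-\rho=F(\mathcal U_n)-F(\mathcal U)=\nabla F(\mathcal U)\cdot(\mathcal U_n-\mathcal U)+\tfrac12(\mathcal U_n-\mathcal U)^{\mathrm T}\nabla^2 F(\tilde{\mathcal U}_n)(\mathcal U_n-\mathcal U),
\]
with $\tilde{\mathcal U}_n$ on that segment. Multiplying by $n$ and substituting the identity gives
\[
n(\rho_n-\rho)-\nabla F(\mathcal U)\cdot\mathcal V=\nabla F(\mathcal U)\cdot\mathcal Z_n+\tfrac{1}{2n}(\mathcal V+\mathcal Z_n)^{\mathrm T}\nabla^2 F(\tilde{\mathcal U}_n)(\mathcal V+\mathcal Z_n).
\]
Because $\tilde{\mathcal U}_n\to\mathcal U$ in probability, $\nabla^2 F$ is continuous at $\mathcal U$, and $\mathcal V+\mathcal Z_n$ is tight, the last term is of order $1/n$ and tends to $0$ in probability. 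This is the delta-method remainder, and it vanishes precisely because the second-order term carries an extra factor $1/n$ that the $n$-scaling cannot undo.

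The main term $\nabla F(\mathcal U)\cdot\mathcal Z_n$ is handled by the joint convergence $(\mathcal Z_n,\mathcal W_1,\mathcal W_2)\to(\mathcal Z,\mathcal W_1,\mathcal W_2)$ from Proposition~\ref{sprop:conj6} together with the continuous mapping theorem. The key observation is that $\mathcal U=(A(\mathcal W_1,\mathcal W_2),D(\mathcal W_1),D(\mathcal W_2))$ is a continuous functional of $(\mathcal W_1,\mathcal W_2)$, so the map $(z,w_1,w_2)\mapsto(\nabla F(\mathcal U(w_1,w_2))\cdot z,\,w_1,\,w_2)$ is continuous off the null set where $D(\mathcal W_i)=0$; hence $(\nabla F(\mathcal U)\cdot\mathcal Z_n,\mathcal W_1,\mathcal W_2)\to(\nabla F(\mathcal U)\cdot\mathcal Z,\mathcal W_1,\mathcal W_2)$ in distribution. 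Combining this with the vanishing remainder via Slutsky's theorem yields the asserted joint convergence. The conditional law is then immediate: given $(\mathcal W_1,\mathcal W_2)$, the coefficient $\nabla F(\mathcal U)$ is a deterministic vector and $\mathcal Z\sim\mathcal N(0,\Sigma_r)$, so $\nabla F(\mathcal U)\cdot\mathcal Z$ is centered Gaussian with variance $\nabla F(\mathcal U)^{\mathrm T}\Sigma_r\,\nabla F(\mathcal U)$.

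It remains to evaluate this quadratic form and match it to the stated expression. Using $\nabla F(a,b,c)=(bc)^{-1/2}\bigl(1,-\tfrac{a}{2b},-\tfrac{a}{2c}\bigr)$ together with the explicit $\Sigma_r$ from Proposition~\ref{sprop:conj6}, a direct computation (in which the $r$-linear contributions telescope neatly) gives $\nabla F(\mathcal U)^{\mathrm T}\Sigma_r\,\nabla F(\mathcal U)=\dfrac{(bc-a^2)(b+c-2ra)}{12(bc)^2}$, which is exactly the claimed variance after reinserting $a=\mathcal X^{1,2}$, $b=\mathcal X^{1,1}$, $c=\mathcal X^{2,2}$. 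I expect the only genuinely delicate point to be the control of the remainder near the degenerate set $\{D(\mathcal W_i)=0\}$, where $\nabla^2 F$ blows up; this is dispatched by noting that the set is null and that $\nabla^2 F(\tilde{\mathcal U}_n)$ is bounded in probability on its complement, since $\tilde{\mathcal U}_n$ is squeezed toward the nondegenerate limit $\mathcal U$. The algebraic simplification of the quadratic form is routine once the cancellations among the $r$-linear terms are observed.
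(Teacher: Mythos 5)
Your proposal is correct and follows essentially the same route as the paper: the delta method applied to the decomposition $n(\mathcal U_n-\mathcal U)=\mathcal V+\mathcal Z_n$ from Proposition~\ref{sprop:conj6}, control of the Taylor remainder in probability by combining tightness of $n(\mathcal U_n-\mathcal U)$ with $\Prob(\mathcal U\in\{(x,y,z):xyz=0\})=0$, joint convergence of the leading term $\nabla F(\mathcal U)\cdot\mathcal Z_n$ inherited from Proposition~\ref{sprop:conj6}, and the same quadratic-form evaluation $\nabla F(\mathcal U)^{\mathrm T}\Sigma_r\nabla F(\mathcal U)=\frac{(\mathcal X^{1,1}\mathcal X^{2,2}-(\mathcal X^{1,2})^2)(\mathcal X^{1,1}+\mathcal X^{2,2}-2r\mathcal X^{1,2})}{12(\mathcal X^{1,1}\mathcal X^{2,2})^2}$. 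The only deviation is technical rather than conceptual: you expand to second order and bound $\nabla^2 F(\tilde{\mathcal U}_n)$ in probability, whereas the paper uses the first-order mean-value form and an $\varepsilon/3$ modulus-of-continuity argument for $\nabla F$; both rest on exactly the same localization away from the degenerate set.
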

\begin{proof}
    By Taylor's expansion, we obtain that
    \begin{equation}
        \begin{aligned}
            n(\rho_n-\rho)&=n(F(\mathcal U_n)-F(\mathcal U))\\
            &=\nabla F(\mathcal U)\cdot n(\mathcal U_n-\mathcal U)+[\nabla F(\mathcal U)-\nabla F(g(\mathcal U_n,\mathcal U))]\cdot n(\mathcal U_n-\mathcal U),
        \end{aligned}
    \end{equation}where $g(x,y)$ is a point on the line segment connecting $x$ and $y$.
    It suffices to prove the second term on the right-hand side converges to $0$ in probability and verify that 
    \begin{equation}
    \label{eq:verifyf}
    \nabla F(\mathcal U)^*\Sigma_r\nabla F(\mathcal U)=\frac{(\mathcal X^{1,1}\mathcal X^{2,2}-(\mathcal X^{1,2})^2)(\mathcal X^{1,1}+\mathcal X^{2,2}-2r\mathcal X^{1,2})}{(\mathcal X^{1,1}\mathcal X^{2,2})^2}.
    \end{equation}
    The verification is a simple matrix multiplication calculation, and so the details are omitted.
    
    Fix $\kappa>0$ and $\varepsilon$ arbitrarily small. It follows from the previous calculations that $n(U_n-U)$ has uniformly bounded $L^2$ norm. Hence, we can find $M>0$ sufficiently large such that for all $n$
    \begin{equation}
    \label{eq:e1}
        \Prob(n|\mathcal U_n-\mathcal U|>M)<\varepsilon/3.
    \end{equation}
    Note that $\nabla F$ is continuous outside the set $S:=\{(x,y,z):xyz=0\}$, thus uniformly continuous in any compact set that does not intersect with $S$ and, moreover, $\Prob(\mathcal U\in S)=0$. Let 
    \[L=\{x\in\mathbb R^3:y\in\mathbb R^3,|x-y|<\delta\Rightarrow|\nabla F(x)-\nabla F(y)|<\kappa/M|\}.\]
    Then, there exists $\delta>0$ such that \begin{equation}
    \label{eq:e2}
    \Prob(\mathcal U\in L)>1-\varepsilon/3.
    \end{equation} 
    Again, since $n(\mathcal U_n-\mathcal U)$ has uniformly bounded $L^2$ norm, there exists $N>0$ such that for all $n>N$
    \begin{equation}
    \label{eq:e3}
        \Prob(|\mathcal U_n-\mathcal U|>\delta)<\varepsilon/3.
    \end{equation}
    Combining \eqref{eq:e1}, \eqref{eq:e2}, and \eqref{eq:e3}, with $M,\delta,N$ chosen as above, we have for $n>N$
    \begin{align}
        &\Prob(|[\nabla F(\mathcal U)-\nabla F(g(\mathcal U_n,\mathcal U))]\cdot n(\mathcal U_n-\mathcal U)|\leq\kappa)\\
        &\quad\geq \Prob(n|\mathcal U_n-\mathcal U|\leq M, |\nabla F(\mathcal U)-\nabla F(g(\mathcal U_n,\mathcal U))|\leq\kappa/M)\\
        &\quad\geq \Prob(|\nabla F(\mathcal U)-\nabla F(g(\mathcal U_n,\mathcal U))|\leq\kappa/M)-\varepsilon/3\\
        &\quad\geq \Prob(\mathcal U\in L,|\mathcal U-\mathcal U_n|\leq\delta)-\varepsilon/3\\
        &\quad>1-\varepsilon/3-\varepsilon/3-\varepsilon/3\\
        &\quad =1-\varepsilon.
    \end{align}
    The result now follows from Proposition~\ref{sprop:conj6}.
\end{proof} 

\begin{corollary}
\label{scor:unconditional}
    Let $\mathcal U$ and $\mathcal V$ be defined as in Proposition~\ref{sprop:conj6}.
    Recalling the definition in \eqref{def:musigma}, we have \begin{gather}
    {\sigma^r(\mathcal W_1,\mathcal W_2)}^2 = \frac{(\mathcal X^{1,1}\mathcal X^{2,2}-(\mathcal X^{1,2})^2)(\mathcal X^{1,1}+\mathcal X^{2,2}-2r\mathcal X^{1,2})}{12(\mathcal X^{1,1}\mathcal X^{2,2})^2}.
    \end{gather}
    Then \[
    \frac{n(\rho_n-\rho)-\nabla F(\mathcal U)\cdot \mathcal V}{\sigma^r(\mathcal W_1,\mathcal W_2)}\to\mathcal N(0,1)
    \] in distribution.
\end{corollary}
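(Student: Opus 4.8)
The plan is to deduce the statement directly from the joint convergence already established in Proposition~\ref{sprop:conj7}, by dividing through by the path-dependent scale $\sigma^r(\mathcal W_1,\mathcal W_2)$ and recognizing the resulting limit as a \emph{self-normalized} Gaussian. Write $\mathcal Y_n := n(\rho_n-\rho)-\nabla F(\mathcal U)\cdot \mathcal V$, so that Proposition~\ref{sprop:conj7} supplies the joint convergence $(\mathcal Y_n,\mathcal W_1,\mathcal W_2)\to(\nabla F(\mathcal U)\cdot\mathcal Z,\mathcal W_1,\mathcal W_2)$ in distribution on $\mathbb R\times\mathbf C([0,1])^2$, where, conditionally on $(\mathcal W_1,\mathcal W_2)$, the limit $\nabla F(\mathcal U)\cdot\mathcal Z$ is centered Gaussian with variance equal to the expression displayed for $\sigma^r(\mathcal W_1,\mathcal W_2)^2$ in the statement. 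The essential structural fact is that $\sigma^r$ is a deterministic functional of the two observed paths, so it can be carried along inside the joint limit rather than being treated as an external constant.

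First I would verify that the map $h(\psi,w_1,w_2)=\psi/\sigma^r(w_1,w_2)$ is continuous off a set that is null under the limiting law. The functionals $D(\cdot)$ and $A(\cdot,\cdot)$ are continuous on $\mathbf C([0,1])$ (respectively its square) in the uniform norm, hence $\sigma^r$ is continuous wherever the three quantities $\mathcal X^{1,1}\mathcal X^{2,2}$, $\mathcal X^{1,1}\mathcal X^{2,2}-(\mathcal X^{1,2})^2$, and $\mathcal X^{1,1}+\mathcal X^{2,2}-2r\mathcal X^{1,2}$ are strictly positive. For non-degenerate jointly Gaussian paths with $|r|<1$ each of these is positive almost surely: $\mathcal X^{i,i}=D(\mathcal W_i)$ vanishes only if $\mathcal W_i$ is a.s.\ constant; $\mathcal X^{1,1}\mathcal X^{2,2}-(\mathcal X^{1,2})^2=0$ forces the centered paths to be $L^2$-proportional, a probability-zero event when $|r|<1$; and Cauchy--Schwarz together with AM--GM gives $\mathcal X^{1,1}+\mathcal X^{2,2}-2r\mathcal X^{1,2}\ge 0$ with equality only on a further degenerate null set. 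Consequently $\sigma^r(\mathcal W_1,\mathcal W_2)>0$ almost surely, $h$ is a.s.\ continuous at the limit, and the continuous mapping theorem upgrades the joint convergence to
\[
\frac{\mathcal Y_n}{\sigma^r(\mathcal W_1,\mathcal W_2)}\;\longrightarrow\;\frac{\nabla F(\mathcal U)\cdot\mathcal Z}{\sigma^r(\mathcal W_1,\mathcal W_2)}\qquad\text{in distribution.}
\]

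It then remains to identify the right-hand limit as $\mathcal N(0,1)$. Conditioning on $(\mathcal W_1,\mathcal W_2)$, the numerator is $\mathcal N(0,\sigma^r(\mathcal W_1,\mathcal W_2)^2)$ while the denominator is the corresponding path-measurable standard deviation, so the conditional law of the ratio is exactly $\mathcal N(0,1)$; since this conditional law does not depend on the paths, the unconditional law is $\mathcal N(0,1)$ as well. I would make this precise through the tower property applied to characteristic functions: for every $\xi\in\mathbb R$,
\[
\E\!\left[\exp\!\left(i\xi\,\frac{\nabla F(\mathcal U)\cdot\mathcal Z}{\sigma^r(\mathcal W_1,\mathcal W_2)}\right)\right]
=\E\!\left[\E\!\left[\exp\!\left(i\xi\,\frac{\nabla F(\mathcal U)\cdot\mathcal Z}{\sigma^r(\mathcal W_1,\mathcal W_2)}\right)\,\Big|\,\mathcal W_1,\mathcal W_2\right]\right]
=\E\!\left[e^{-\xi^2/2}\right]=e^{-\xi^2/2},
\]
the characteristic function of the standard normal law. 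The main obstacle is conceptual rather than computational: the normalizing factor $\sigma^r(\mathcal W_1,\mathcal W_2)$ is itself random and entangled with the converging sequence, so one cannot naively divide a weak limit by a constant. What makes the argument go through is precisely that Proposition~\ref{sprop:conj7} delivers convergence \emph{jointly with the paths}, a stable-type convergence that lets $\sigma^r$ be handled by the continuous mapping theorem; the residual care lies in confirming the almost sure positivity and continuity of $\sigma^r$ so that the mapping is admissible.
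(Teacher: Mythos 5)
Your proposal is correct and follows essentially the same route as the paper: both exploit the joint convergence $(\mathcal Y_n,\mathcal W_1,\mathcal W_2)\to(\nabla F(\mathcal U)\cdot\mathcal Z,\mathcal W_1,\mathcal W_2)$ from Proposition~\ref{sprop:conj7}, divide by the path-measurable scale $\sigma^r(\mathcal W_1,\mathcal W_2)$, and identify the limit as $\mathcal N(0,1)$ via the tower property applied to characteristic functions, yielding $\E[e^{-\xi^2/2}]=e^{-\xi^2/2}$. Your additional verification of the almost-sure positivity of $\sigma^r(\mathcal W_1,\mathcal W_2)$ and the a.s.\ continuity of the map $(\psi,w_1,w_2)\mapsto\psi/\sigma^r(w_1,w_2)$ makes explicit the continuous-mapping step that the paper's proof uses implicitly in its first line.
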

\begin{proof}
    Let $\xi_n = n(\rho_n-\rho)-\nabla F(\mathcal U)\cdot\mathcal V$ and $\xi = \nabla F(\mathcal U)\cdot \mathcal Z$.
    Since $(\xi_n,\mathcal W_1, \mathcal W_2)\to(\xi,\mathcal W_1, \mathcal W_2)$, we have
    \begin{align}
        \E \exp\left(it\cdot\frac{\xi_n}{\sigma^r(\mathcal W_1,\mathcal W_2)}\right)&\to\E\exp\left(it\cdot\frac{\xi}{\sigma^r(\mathcal W_1,\mathcal W_2)}\right)\\
        &=\E\left[\E\left(\exp\left(i\cdot\frac{t}{\sigma^r(\mathcal W_1,\mathcal W_2)}\cdot\xi\right)\bigg|\mathcal W_1,\mathcal W_2\right)\right]\\
        &=\E\exp\left(-\frac{\sigma^r(\mathcal W_1,\mathcal W_2)^2\cdot(t/\sigma^r(\mathcal W_1,\mathcal W_2))^2}{2}\right)\\
        &=\exp(-t^2/2).\qedhere
    \end{align}
\end{proof}

Theorem~\ref{sthm:cov} follows immediately from Propositions~\ref{sprop:conj6} and \ref{sprop:conj7} and Corollary~\ref{scor:unconditional}.

\section{Results on fractional Brownian motions}
\label{sec:fbm}
In this section, we will prove the results for the case of fractional Brownian motions with Hurst parameter $H>1/2$.
Note that we will re-define some letters from Section \ref{sec:sbm} to denote analogous terms concerning fractional Brownian motions instead of standard Brownian motions.
Suppose that $\mathcal  B^H(t)$ is a fractional Brownian motion with the Hurst parameter $H>1/2$.
Let $\mathcal E$ be the set of step functions on $[0,1]$.
Consider the Hilbert space $\mathcal H$ to be defined as the closure of $\mathcal E$ with respect to the scalar product
\begin{equation}
    \langle \bm 1_{[0,t]},\bm 1_{[0,s]}\rangle_{\mathcal H}=\frac{1}{2}(t^{2H}+s^{2H}-|t-s|^{2H}).
\end{equation}
For any functions $\varphi,\psi\in\mathcal H$, we have that
\begin{equation}
    \langle \varphi,\psi\rangle_{\mathcal H}=H(2H-1)\int_0^1\int_0^1|r-u|^{2H-2}\varphi_r\psi_udrdu.
\end{equation}
The mapping $B:\bm 1_{[0,t]}\mapsto \mathcal B^H(t)$ can be extended to an isometry between $\mathcal H$ and the Gaussian space associated with $\mathcal B^H$.
Then $\{B(\varphi),\varphi\in\mathcal H\}$ is an isonormal Gaussian process.
The steps of the proof are similar to those in Section~\ref{sec:sbm}, except that an additional multiplier $n^{H-\frac{1}{2}}$ is required to reveal the nontrivial normality in the limit.

\subsection{Representation of the covariance as iterated Stratonovich integrals}
\label{fsec:representation}
As in the case of standard Brownian motion, we will write $D(\mathcal B^H)$ and $D_n(\mathcal B^H)$ as stochastic integrals with respect to $\mathcal B^H(t)$.
Since $H>1/2$, we can use the path-wise Riemann-Stieltjes integral (Young integral), which coincides with the Stratonovich integral, denoted by $\int X_t\circ d\mathcal B^H(t)$, and with divergence integral if the integrand is deterministic.
More precisely, for the continuous integrals in $D(\mathcal B^H)$, we have
\begin{align}
    \int_0^1 \mathcal B^H(t)dt&=\lim_{n\to\infty}\frac{1}{n}\sum_{k=1}^n\mathcal B^H(k/n)\\
    &=\lim_{n\to\infty}\frac{1}{n}\sum_{k=0}^{n-1}(n-k)(\mathcal B^H((k+1)/n)-\mathcal B^H({k/n}))\\
    &=\int_0^1 (1-t)\circ d\mathcal B^H(t).
\end{align}
By the chain rule of the Stratonovich integral,
\begin{equation}
    \left(\int_0^1 \mathcal B^H(t)dt\right)^2=2\int_0^1(1-t)\int_0^t(1-s)\circ d\mathcal B^H(s)\circ d\mathcal B^H(t).
\end{equation}
On the other hand,
\begin{align}
    \int_0^1 (\mathcal B^H(t))^2dt&=\lim_{n\to\infty}\frac{1}{n}\sum_{k=1}^n\mathcal B^H(k/n)^2\\
    &=\lim_{n\to\infty}\frac{1}{n}\sum_{k=0}^{n-1}(n-k)(\mathcal B^H({(k+1)/n}))-\mathcal B^H({k/n}))(\mathcal B^H({(k+1)/n}))+\mathcal B^H({k/n}))\\
    &=2\int_0^1 (1-t)\mathcal B^H(t)\circ d\mathcal B^H(t).
\end{align}
For the discrete sums in $D_n(\mathcal B^H)$, we have
\begin{align}
    \frac{1}{n}\sum_{k=0}^{n-1}\mathcal B^H({k/n})&=\frac{1}{n}\sum_{k=0}^{n-1}(n-k-1)(\mathcal B^H({(k+1)/n})-\mathcal B^H(k/n))\\
    &=\int_0^1\left(1-\frac{\lceil nt\rceil}{n}\right)\circ d\mathcal B^H(t).
\end{align}
By the chain rule of the Stratonovich integral,
\begin{equation}
    \left(\frac{1}{n}\sum_{k=1}^n\mathcal B^H(k/n)\right)^2=2\int_0^1\left(1-\frac{\lceil nt\rceil}{n}\right)\int_0^t\left(1-\frac{\lceil ns\rceil}{n}\right)\circ d\mathcal B^H(s)\circ d\mathcal B^H(t).
\end{equation}
On the other hand, 
\begin{align}
    \frac{1}{n}\sum_{k=0}^{n-1}\mathcal B^H(k/n)^2&=\frac{1}{n}\sum_{k=0}^{n-1}(n-k-1)(\mathcal B^H((k+1)/n)^2-\mathcal B^H(k/n)^2)\\
    &=2\cdot\frac{1}{n}\sum_{k=0}^{n-1}(n-k-1)\int_{k/n}^{(k+1)/n}\mathcal B^H(t)\circ d\mathcal B^H(t)\\
    &=2\sum_{k=0}^{n-1}\int_{k/n}^{(k+1)/n}\left(1-\frac{\lceil nt\rceil}{n}\right)\mathcal B^H(t)\circ d\mathcal B^H(t)\\
    &=2\int_0^1\left(1-\frac{\lceil nt\rceil}{n}\right)\mathcal B^H(t)\circ d\mathcal B^H(t).
\end{align}
Thus, it can be verified that 
\begin{equation}
\label{feq:1}
    \begin{aligned}
        n(D_n(\mathcal B^H)-D(\mathcal B))=2n\int_0^1\int_0^t(M_n(s,t)-M(s,t))\circ d\mathcal B^H(s)\circ d\mathcal B^H(t),
    \end{aligned}
\end{equation}
where 
\begin{equation}
\begin{aligned}
    M_n(s,t)&=\frac{\lceil ns\rceil}{n}\wedge\frac{\lceil nt\rceil}{n}-\frac{\lceil ns\rceil}{n}\cdot\frac{\lceil nt\rceil}{n},\\
    M(s,t)&=s\wedge t-st.
\end{aligned}
\end{equation}
Then the r.h.s. of \eqref{feq:1} can be written as the sum of three terms
\begin{equation}
    \label{eq:decom}
\begin{aligned}
    nD_n^1&=2\int_0^1\int_0^t(\lceil ns\rceil-ns)(1-t)\circ d\mathcal B^H(s)\circ d\mathcal B^H(t),\\
    nD_n^2&=-2\int_0^1\int_0^t(\lceil nt\rceil-nt)s\circ d\mathcal B^H(s)\circ d\mathcal B^H(t),\\
    nD_n^3&=-2\int_0^1\int_0^t\frac{1}{n}(\lceil nt\rceil-nt)(\lceil ns\rceil-ns)\circ d\mathcal B^H(s)\circ d\mathcal B^H(t).
\end{aligned}
\end{equation}
The last term $nD_n^3$ converges to $0$ in $L^2$ as $n\to\infty$ with a rate of $1/n$. The first two terms can be written as iterated Stratonovich integrals with similar integrands but with different fractional Brownian motions:
\begin{equation}
    \begin{aligned}
     nD_n^1&=2\int_0^1(1-t)\int_0^t(\lceil ns\rceil-ns)\circ d\mathcal B^H(s)\circ d\mathcal B^H(t),\\
        nD_n^2&=-2\int_0^1(1-t)\int_0^t(ns-\lfloor ns\rfloor)\circ d\tilde{\mathcal B}^H(s)\circ d\tilde{\mathcal B}^H(t),
    \end{aligned}
\end{equation}where $\tilde{\mathcal B}^H(t)=\mathcal B^H(1-t)-\mathcal B^H(1)$.
To obtain the limiting distribution of the sum, we need the results of the joint distributions of random variables involved in the expressions of $nD_n^1$ and $nD_n^2$.

Define
\begin{equation}
    \mathcal P_n(t)=\int_0^t\left(\lceil ns\rceil-ns-\frac{1}{2}\right)\circ d\mathcal B^H(s),
\end{equation}
\begin{equation}
    \mathcal Q_n(t)=\int_0^t\left(ns-\lfloor ns\rfloor-\frac{1}{2}\right) \circ d\tilde{\mathcal B}^H(s)=-\int_{1-t}^1\left(\lceil ns\rceil-ns-\frac{1}{2}\right)\circ d\mathcal B^H(s).
\end{equation}
Then
\begin{equation}
\label{eq:2}
\begin{aligned}
    nD_n^1 &= \int_0^1 (1-t){\mathcal B}^H(t)\circ d{\mathcal B}^H(t)+2\int_0^1(1-t)\mathcal P_n(t)\circ d{\mathcal B}^H(t).\\
    nD_n^2&=-\int_0^1(1-t)\tilde {\mathcal B}^H(t)\circ d\tilde {\mathcal B}^H(t)-2\int_0^1(1-t)\mathcal Q_n(t)\circ d\tilde {\mathcal B}^H(t)\\
    &=\int_0^1 t(\mathcal B^H(t)-\mathcal B^H(1))\circ d{\mathcal B}^H(t)-2\int_0^1(1-t)\mathcal Q_n(t)\circ d\tilde {\mathcal B}^H(t).
\end{aligned}
\end{equation}
    \subsection{Convergence of the integrands}
    In contrast to Section~\ref{sec:sbm}, the integrands $\mathcal P_n$ and $\mathcal Q_n$ will no longer converge to nontrivial Gaussian processes. 
    In fact, due to the positive correlation of the increments of fractional Brownian motions with Hurst parameter $H>1/2$, $\mathcal P_n$ and $\mathcal Q_n$ will converge to zero processes.
    However, with an additional multiplier $n^{H-\frac{1}{2}}$, we will see that $n^{H-\frac{1}{2}}\mathcal P_n(t)$ and $n^{H-\frac{1}{2}}\mathcal Q_n(t)$ do have nontrivial limits.
    To simplify notation, let $\alpha=2H-1$. Since $\frac{1}{2}<H<1$, we have $0<\alpha<1$.
\subsubsection{\texorpdfstring{$L^2$}{L2} convergence to zero}
    \label{sec:before_norm}
Since 
\begin{equation}
    \mathcal P_n(t)=\int_0^t\left(\lceil ns\rceil-ns-\frac{1}{2}\right)\circ d\mathcal B^H(s)=\int_0^t\left(\lceil ns\rceil-ns-\frac{1}{2}\right) d\mathcal B^H(s),
\end{equation}where the right-hand side is understood as the divergence integral, 
\begin{equation}
\begin{aligned}
    \E \mathcal P_n(t)^2&=\left\langle \bm  1_{[0,t]}\left(\lceil ns\rceil-ns-\frac{1}{2}\right),\bm 1_{[0,t]}\left(\lceil ns\rceil-ns-\frac{1}{2}\right)\right\rangle_{\mathcal H}\\
    &=H(2H-1)\int_0^t\int_0^t|s-u|^{2H-2}\left(\lceil ns\rceil-ns-\frac{1}{2}\right)\left(\lceil nu\rceil-nu-\frac{1}{2}\right)duds\\
    &=2H(2H-1)\int_0^t\int_0^s(s-u)^{2H-2}\left(\lceil nu\rceil-nu-\frac{1}{2}\right)du\cdot \left(\lceil ns\rceil-ns-\frac{1}{2}\right)ds.
\end{aligned}
\end{equation}
Let us define 
\begin{equation}
\label{def:fn}
    f_n(t) = \int_0^t(t-s)^{2H-2}\left(\lceil ns\rceil -ns-\frac{1}{2}\right)ds=\int_0^t(t-s)^{\alpha-1}\left(\lceil ns\rceil -ns-\frac{1}{2}\right)ds.
\end{equation}
Then 
\begin{equation}
    \label{eq:variance0}
    \E \mathcal P_n(t)^2 = 2H(2H-1)\int_0^tf_n(s)\left(\lceil ns\rceil -ns-\frac{1}{2}\right)ds.
\end{equation}
In fact, $f_n(t)$ can be computed relatively explicitly.
Namely, for $t\not\in\{j/n:j\in\mathbb N\}$,
\begin{align}
        f_n(t)&=\sum_{k=1}^{\lfloor nt\rfloor}\int_{(k-1)/n}^{k/n}(t-s)^{\alpha-1}\left(k-\frac{1}{2}-ns\right)ds+\int_{\lfloor nt\rfloor/n}^t(t-s)^{\alpha-1}\left(\lceil nt\rceil -\frac{1}{2}-ns\right)ds\\
        &=\sum_{k=1}^{\lfloor nt\rfloor}\int_{(k-1)/n}^{k/n}(t-s)^{\alpha-1}\left(k-\frac{1}{2}-nt\right)ds+n\sum_{k=1}^{\lfloor nt\rfloor}\int_{(k-1)/n}^{k/n}(t-s)^{\alpha}ds\\
        &\quad +\int_{\lfloor nt\rfloor/n}^t(t-s)^{\alpha-1}\left(\lceil nt\rceil -\frac{1}{2}-nt\right)ds+n\int_{\lfloor nt\rfloor/n}^t(t-s)^{\alpha}ds\\
        &=\sum_{k=1}^{\lfloor nt\rfloor}\frac{k-\frac{1}{2}-nt}{\alpha}[(t-(k-1)/n)^{\alpha}-(t-k/n)^{\alpha}]\\
        &\quad+\frac{n}{\alpha+1}\sum_{k=1}^{\lfloor nt\rfloor}[(t-(k-1)/n)^{\alpha+1}-(t-k/n)^{\alpha+1}]\\
        &\quad+\frac{\lceil nt\rceil -\frac{1}{2}-nt}{\alpha}\left(t-\frac{\lfloor nt\rfloor}{n}\right)^{\alpha}+\frac{n}{\alpha+1}\left(t-\frac{\lfloor nt\rfloor}{n}\right)^{\alpha+1}\\
        &=\frac{1/2-nt}{\alpha}t^{\alpha}+\frac{1}{\alpha}\sum_{k=1}^{\lfloor nt\rfloor-1}(t-k/n)^{\alpha}-\frac{\lfloor nt\rfloor-1/2-nt}{\alpha}(t-\lfloor nt\rfloor/n)^{\alpha}\\
        &\quad+\frac{n}{\alpha+1}(t^{\alpha+1}-(t-\lfloor nt\rfloor/n)^{\alpha+1})+\frac{\lceil nt\rceil -\frac{1}{2}-nt}{\alpha}\left(t-\frac{\lfloor nt\rfloor}{n}\right)^{\alpha}+\frac{n}{\alpha+1}\left(t-\frac{\lfloor nt\rfloor}{n}\right)^{\alpha+1}\\
        &=\frac{1}{\alpha}\sum_{k=1}^{\lfloor nt\rfloor}(t-k/n)^{\alpha}+\frac{1}{2\alpha}t^{\alpha}-\frac{n}{(\alpha+1)\alpha}t^{\alpha+1}.
    \end{align}
By invoking the Euler-Maclaurin summation formula, we can get an expansion of the sum on the right-hand side of the equality above.
    Defining $s_{\gamma}(x)=(\gamma+x)^\alpha$, $0<\gamma<1$, then
    \begin{equation}
    \label{euler1}
        \begin{aligned}
            \sum_{k=0}^{\lfloor nt\rfloor-1}s_{\gamma}(k)&=\int_0^{\lfloor nt\rfloor}s_\gamma(x) dx-\frac{1}{2}s_\gamma(x)\bigg|^{\lfloor nt\rfloor}_0+\frac{1}{12}s'_{\gamma}(x)\bigg|^{\lfloor nt\rfloor}_0+R_2(\gamma,\lfloor nt\rfloor)\\
            &=\frac{1}{1+\alpha}\left[(\gamma+\lfloor nt\rfloor)^{\alpha+1}-\gamma^{\alpha+1}\right]-\frac{1}{2}\left[(\gamma+\lfloor nt\rfloor)^{\alpha}-\gamma^{\alpha}\right]\\
            &\quad+\frac{\alpha}{12}\left[(\gamma+\lfloor nt\rfloor)^{\alpha-1}-\gamma^{\alpha-1}\right]+R_2(\lfloor nt\rfloor,\gamma),
        \end{aligned}
    \end{equation}
where $R_2(\lfloor nt\rfloor,\gamma)=-\frac{1}{2}\int_0^{\lfloor nt\rfloor}B_2(\{x\})s_{\gamma}''(x)dx$, $\{\cdot\}$ denotes the fractional part, and $B_2(\cdot)$ is the Bernoulli polynomial, i.e., $$B_2(x)=x^2-x+\frac{1}{6}.$$
Since $s''_{\gamma}(x)=\alpha(\alpha-1)(\gamma+x)^{\alpha-2}$ and $\alpha-2=2H-3<-1$, $R_2(\gamma,\lfloor nt\rfloor)$ is bounded for each fixed $0<\gamma<1$.
More precisely, let \begin{gather}R_2(\gamma)=\frac{\alpha(1-\alpha)}{2}\int_0^\infty B_2(\{x\})(\gamma+x)^{\alpha-2}dx,\\ \bar R_2(\lfloor nt\rfloor,\gamma)=\frac{\alpha(1-\alpha)}{2}\int_{\lfloor nt\rfloor}^\infty B_2(\{x\})(\gamma+x)^{\alpha-2}dx.
\end{gather}
Then,
    \begin{equation}
    \label{eq:r2}
        R_2(\lfloor nt\rfloor,\gamma)=R_2(\gamma)-\bar R_2(\lfloor nt\rfloor,\gamma).
    \end{equation}
We can obtain an upper bound of $\bar R_2$ due to the fact that $\sup_{[0,1]}|B_2(\cdot)|\leq 1/6$.
Notice that ,for $0<\gamma<1$,
\begin{gather}
\label{eq:r2bound}
\bar R_2(\lfloor nt\rfloor,\gamma)=\frac{\alpha(1-\alpha)}{2}\int_{\lfloor nt\rfloor}^\infty B_2(\{x\})(\gamma+x)^{\alpha-2}dx\leq\frac{\alpha}{12}(\lfloor nt\rfloor+\gamma)^{\alpha-1}.
\end{gather}
Combining \eqref{euler1} and \eqref{eq:r2}, we obtain, for $t\not\in\{j/n:j\in\mathbb N\}$,
\begin{equation}
\label{eq:fn1}
    \begin{aligned}
        f_n(t)&=\frac{1}{\alpha n^\alpha}\sum_{k=0}^{\lfloor nt\rfloor-1}s_{\{nt\}}(k)+\frac{1}{2\alpha}t^\alpha-\frac{n}{\alpha(\alpha+1)}t^{\alpha+1}\\
        &=-\frac{1}{\alpha(1+\alpha)n^{\alpha}}\{nt\}^{\alpha+1}+\frac{1}{2\alpha n^{\alpha}}\{nt\}^{\alpha}\\
        &\quad+\frac{1}{12n^{\alpha}}\left[(nt)^{\alpha-1}-\{nt\}^{\alpha-1}\right]+\frac{1}{\alpha n^{\alpha}}R_2(\{nt\})-\frac{1}{\alpha n^{\alpha}}\bar R_2(\lfloor nt\rfloor,\{nt\}).
    \end{aligned}
    \end{equation}
    Note that we can safely disregard the set $\{j/n:j\in\mathbb N\}$ since we only care about the integral of $f_n(t)$.
    It can be easily seen that $f_n(t)$ is integrable on each interval $[j/n,(j+1)/n]$, and, since $\alpha>0$, the integral in \eqref{eq:variance0} is bounded by $M_\alpha n^{-\alpha}$ with some constant $M_\alpha$ that only depends on $\alpha$ and therefore converges to $0$ as $n\to\infty$.
    The following result follows immediately.
\begin{lemma}
\label{lem:L2}
    As $n\to\infty$, $\mathcal P_n(t)\to0$ in $L^2$ and $\mathcal Q_n(t)\to0$ in $L^2$ for any $t\geq 0$.
\end{lemma}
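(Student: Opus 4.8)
The plan is to exploit the closed form \eqref{eq:fn1} for $f_n$ that has already been derived, and to read off the $L^2$ convergence directly from the variance formula \eqref{eq:variance0}. Since the weight satisfies $|\lceil ns\rceil-ns-\frac12|\le\frac12$, taking absolute values inside \eqref{eq:variance0} shows that $\E\mathcal P_n(t)^2\le H(2H-1)\int_0^t|f_n(s)|\,ds$. Thus it suffices to prove the single estimate $\int_0^t|f_n(s)|\,ds=O(n^{-\alpha})$, which, because $\alpha>0$, gives $\E\mathcal P_n(t)^2\to0$ as claimed. All the genuine analytic effort — producing the explicit expansion of $f_n$ via Euler--Maclaurin — is already behind us, so what remains is careful bookkeeping.

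To bound $\int_0^t|f_n(s)|\,ds$ I would go through the terms appearing in \eqref{eq:fn1}, each of which carries an explicit prefactor $n^{-\alpha}$. The terms built from $\{ns\}^{\alpha+1}$ and $\{ns\}^{\alpha}$ are uniformly bounded in $s$ (as $\{ns\}\in[0,1)$ and $\alpha>0$), so they contribute $O(n^{-\alpha})$ after integration over $[0,t]$. The contributions of $(ns)^{\alpha-1}$ and of the remainder $\bar R_2(\lfloor ns\rfloor,\{ns\})$ — the latter controlled by \eqref{eq:r2bound} through $\bar R_2\le\frac{\alpha}{12}(ns)^{\alpha-1}$ — are handled by the substitution $u=ns$, which turns $\int_0^t(ns)^{\alpha-1}\,ds$ into $n^{\alpha-1}t^{\alpha}/\alpha$ and so yields a contribution of the smaller order $n^{-1}$.

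The two genuinely singular terms, those involving $\{ns\}^{\alpha-1}$ and $R_2(\{ns\})$, are where I would be most careful, and they constitute the only real obstacle. Both integrands are mildly periodic: on each subinterval $[k/n,(k+1)/n)$ the fractional part $\{ns\}$ sweeps $(0,1)$, so $\int_{k/n}^{(k+1)/n}\{ns\}^{\alpha-1}\,ds=\frac1n\int_0^1\gamma^{\alpha-1}\,d\gamma=\frac1{\alpha n}$, finite precisely because $\alpha>0$; summing over the $\lfloor nt\rfloor$ relevant subintervals produces a total of order $O(1)$, which after the $n^{-\alpha}$ prefactor is $O(n^{-\alpha})$. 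The same periodization reduces the $R_2(\{ns\})$ term to checking that $\int_0^1 R_2(\gamma)\,d\gamma<\infty$, which follows from the definition of $R_2$ together with the integrability of $\gamma^{\alpha-1}$ near the origin. Collecting all contributions gives $\int_0^t|f_n(s)|\,ds=O(n^{-\alpha})$, hence $\mathcal P_n(t)\to0$ in $L^2$.

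Finally, for $\mathcal Q_n$ I would invoke symmetry rather than repeat the computation. Since $\tilde{\mathcal B}^H(t)=\mathcal B^H(1-t)-\mathcal B^H(1)$ is again a fractional Brownian motion with the same Hurst parameter $H$, the scalar product on $\mathcal H$ governing the divergence integral is unchanged, and the weight $ns-\lfloor ns\rfloor-\frac12$ has the same magnitude as the one entering $\mathcal P_n$. The identical estimate therefore applies verbatim with $\tilde{\mathcal B}^H$ in place of $\mathcal B^H$, yielding $\E\mathcal Q_n(t)^2=O(n^{-\alpha})\to0$ and completing the proof.
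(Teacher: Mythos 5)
Your proposal is correct and follows essentially the same route as the paper: bounding $\E\mathcal P_n(t)^2$ via \eqref{eq:variance0} by $H(2H-1)\int_0^t|f_n(s)|\,ds$ and reading off the $O(n^{-\alpha})$ rate term-by-term from the Euler--Maclaurin expansion \eqref{eq:fn1}, with $\mathcal Q_n$ handled by symmetry. In fact, the paper compresses this entire bookkeeping into the assertion that the integral ``is bounded by $M_\alpha n^{-\alpha}$,'' so your term-by-term analysis (uniformly bounded fractional-power terms, the $u=ns$ substitution for the $(ns)^{\alpha-1}$ and $\bar R_2$ terms, and periodization of $\{ns\}^{\alpha-1}$ and $R_2(\{ns\})$) simply supplies the details the paper leaves implicit.
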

    \subsubsection{Normalization and limiting variance \texorpdfstring{$\sigma_H^2$}{sigma H 2}}
    Since the integral in \eqref{eq:variance0} is of order $n^{-\alpha}$, we expect $n^{\alpha/2}\mathcal P_n(t)$ and $n^{\alpha/2}\mathcal Q_n(t)$ to have  non-trivial $L^2$ norm.
    The following result shows that $n^{\alpha}f_n(t)$ is close to a function that only depends on $\{nt\}$, the fractional part of $nt$, as $nt\to\infty$.
\begin{lemma}
\label{lem:fn}
For $0<\gamma<1$, define 
\begin{equation}
\label{def:Rgamma}
    R(\gamma)=-\frac{1}{\alpha(1+\alpha)}\gamma^{\alpha+1}+\frac{1}{2\alpha}\gamma^{\alpha}-\frac{1}{12}\gamma^{\alpha-1}+\frac{1}{\alpha}R_2(\gamma).
\end{equation}
    Then
    \begin{gather}
        |n^\alpha f_n(t)
        -R(\{nt\})|\leq \frac{1}{6} (nt)^{\alpha-1},
    \end{gather}for $t\not\in\{j/n:j\in\mathbb N\}$.
\end{lemma}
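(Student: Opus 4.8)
The plan is to recognize that the lemma is essentially bookkeeping on the explicit expansion already produced in \eqref{eq:fn1}: the function $R(\gamma)$ in \eqref{def:Rgamma} has been designed precisely to collect all the terms of $n^\alpha f_n(t)$ that depend only on the fractional part $\{nt\}$, so the whole content of the claim is to identify what is left over after that collection and to bound it. Accordingly, I would first multiply \eqref{eq:fn1} through by $n^\alpha$ to clear every factor $n^{-\alpha}$, obtaining
\begin{equation}
    \begin{aligned}
        n^\alpha f_n(t)&=-\frac{1}{\alpha(1+\alpha)}\{nt\}^{\alpha+1}+\frac{1}{2\alpha}\{nt\}^{\alpha}+\frac{1}{12}\left[(nt)^{\alpha-1}-\{nt\}^{\alpha-1}\right]\\
        &\quad+\frac{1}{\alpha}R_2(\{nt\})-\frac{1}{\alpha}\bar R_2(\lfloor nt\rfloor,\{nt\}).
    \end{aligned}
\end{equation}

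Next I would compare this term-by-term with $R(\{nt\})$, i.e.\ \eqref{def:Rgamma} evaluated at $\gamma=\{nt\}$. The four $\{nt\}$-only contributions $-\frac{1}{\alpha(1+\alpha)}\{nt\}^{\alpha+1}$, $\frac{1}{2\alpha}\{nt\}^{\alpha}$, $-\frac{1}{12}\{nt\}^{\alpha-1}$, and $\frac{1}{\alpha}R_2(\{nt\})$ cancel exactly, so the subtraction collapses to the clean identity
\begin{equation}
    n^\alpha f_n(t)-R(\{nt\})=\frac{1}{12}(nt)^{\alpha-1}-\frac{1}{\alpha}\bar R_2(\lfloor nt\rfloor,\{nt\}).
\end{equation}
At this point the lemma is reduced to showing that the right-hand side is bounded in absolute value by $\frac{1}{6}(nt)^{\alpha-1}$, which I would do by the triangle inequality once the tail term $\bar R_2$ is controlled.

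To control that tail term I would invoke the estimate \eqref{eq:r2bound}, but in its two-sided form: its proof rests only on $\sup_{[0,1]}|B_2(\cdot)|\le 1/6$ together with $\frac{\alpha(1-\alpha)}{2}>0$, and applying the same argument to the absolute value of the integrand gives $|\bar R_2(\lfloor nt\rfloor,\{nt\})|\le \frac{\alpha}{12}(\lfloor nt\rfloor+\{nt\})^{\alpha-1}=\frac{\alpha}{12}(nt)^{\alpha-1}$, where I use the elementary identity $\lfloor nt\rfloor+\{nt\}=nt$. Dividing by $\alpha$ bounds the second term on the right-hand side by $\frac{1}{12}(nt)^{\alpha-1}$, which combines with the first term (itself exactly $\frac{1}{12}(nt)^{\alpha-1}$) to give the claimed $\frac{1}{6}(nt)^{\alpha-1}$. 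There is no real obstacle here; the only points requiring mild care are promoting the one-sided bound \eqref{eq:r2bound} to a two-sided bound on $|\bar R_2|$ via the uniform bound on the Bernoulli polynomial, and matching the power of $nt$ through the identity $\lfloor nt\rfloor+\{nt\}=nt$ so that both leftover terms carry the same factor $(nt)^{\alpha-1}$.
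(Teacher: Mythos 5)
Your proposal is correct and is essentially the paper's own proof: multiply \eqref{eq:fn1} by $n^\alpha$, observe that the $\{nt\}$-dependent terms are exactly $R(\{nt\})$, and bound the leftover $\frac{1}{12}(nt)^{\alpha-1}-\frac{1}{\alpha}\bar R_2(\lfloor nt\rfloor,\{nt\})$ via \eqref{eq:r2bound} and the triangle inequality. Your one point of extra care, promoting \eqref{eq:r2bound} to a two-sided bound on $|\bar R_2|$ using $\sup_{[0,1]}|B_2|\leq 1/6$ and $\lfloor nt\rfloor+\{nt\}=nt$, is a detail the paper leaves implicit but which is needed and justified by the paper's own remark preceding \eqref{eq:r2bound}.
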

\begin{proof}
    Due to \eqref{eq:fn1}, we have that
    \begin{equation}
        \begin{aligned}
            n^\alpha f_n(t)&=-\frac{1}{\alpha(1+\alpha)}\{nt\}^{\alpha+1}+\frac{1}{2\alpha}\{nt\}^{\alpha}-\frac{1}{12}\{nt\}^{\alpha-1}\\
        &\quad+\frac{1}{\alpha}R_2(\{nt\})-\frac{1}{\alpha}\bar R_2(\lfloor nt\rfloor,\{nt\})+\frac{1}{12}(nt)^{\alpha-1}.
        \end{aligned}
    \end{equation}
    Due to \eqref{eq:r2bound}, we obtain the desired result.
\end{proof}
We present the next technical result that is useful later to prove that the normalized process $n^{\alpha/2}\mathcal P_n(t)$ has non-trivial variance.
\begin{lemma}
    \label{lem:sym_pos}
    Suppose $f(x)$ is a convex function on $[0,1]$.
    Then 
    \begin{equation}
        \int_0^1 B_2(x)f(x)dx\geq 0.
    \end{equation}
\end{lemma}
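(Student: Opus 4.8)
The plan is to reduce the inequality to the defining property of convexity, namely $f''\ge 0$, by integrating by parts twice against a carefully chosen primitive of $B_2$. Concretely, I would look for a function $G$ on $[0,1]$ solving $G''=B_2$ together with the four homogeneous boundary conditions $G(0)=G(1)=G'(0)=G'(1)=0$, and which is moreover nonnegative on $[0,1]$. If such a $G$ exists, then for $f\in C^2$ two integrations by parts give
\begin{equation}
\int_0^1 B_2(x)f(x)\,dx=\int_0^1 G''(x)f(x)\,dx=\bigl[G'f-Gf'\bigr]_0^1+\int_0^1 G(x)f''(x)\,dx=\int_0^1 G(x)f''(x)\,dx,
\end{equation}
the boundary terms vanishing by construction, so that convexity ($f''\ge 0$) together with $G\ge 0$ immediately yields the claim.

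The first thing to check is that such a $G$ can exist at all. Since $G''=B_2$ pins $G$ down only up to an affine function (two free constants), imposing four boundary conditions is over-determined and is solvable precisely because of the two orthogonality relations $\int_0^1 B_2(x)\,dx=0$ and $\int_0^1 xB_2(x)\,dx=0$, both of which follow in one line from $B_2(x)=x^2-x+\tfrac16$. In fact one can write the candidate down explicitly; I would take
\begin{equation}
G(x)=\frac{1}{12}\,x^2(1-x)^2,
\end{equation}
and verify directly that $G''(x)=x^2-x+\tfrac16=B_2(x)$, that $G$ and $G'$ both vanish at $0$ and $1$, and that $G\ge 0$ on $[0,1]$ (indeed on all of $\mathbb R$). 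This disposes of the smooth case completely.

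The main obstacle is that a general convex $f$ on $[0,1]$ need not be twice differentiable, so the quantity $\int_0^1 G f''$ must be interpreted with care. I would handle this by approximation: mollifying $f$ yields smooth convex functions $f_\varepsilon\to f$ (convexity is preserved under convolution with a nonnegative kernel), the inequality $\int_0^1 B_2 f_\varepsilon\ge 0$ holds for each $\varepsilon$ by the smooth case, and one passes to the limit using the local boundedness of convex functions and dominated convergence. Equivalently, and somewhat more cleanly, I would invoke the fact that the distributional second derivative of a convex function is a nonnegative Radon measure $\mu$ on $(0,1)$, so that the same double integration by parts reads $\int_0^1 B_2 f=\int_{(0,1)} G\,d\mu\ge 0$ since $G\ge 0$. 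In either formulation, the vanishing of both $G$ and $G'$ at the endpoints is exactly what kills the boundary contributions and makes the argument go through despite $f$ being merely convex; the only point requiring genuine care is the endpoint behaviour of $f$ and its one-sided derivatives, which is controlled by the double zero of $G$ at $0$ and $1$.
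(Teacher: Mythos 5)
Your proof is correct, but it takes a genuinely different route from the paper. You solve $G''=B_2$ with the explicit nonnegative kernel $G(x)=\frac{1}{12}x^2(1-x)^2$ (whose existence under four boundary conditions is exactly guaranteed by the moment identities $\int_0^1 B_2=\int_0^1 xB_2=0$), integrate by parts twice, and reduce the claim to $f''\ge 0$; this is essentially the classical Euler--Maclaurin kernel-positivity argument, since $x^2(1-x)^2=B_4(x)+\tfrac{1}{30}$. The paper instead exploits the symmetry $B_2(x)=B_2(1-x)$: it folds the integral onto $[0,1/2]$, writes it in terms of $g(x)=f(x)+f(1-x)$, which is convex, symmetric about $1/2$, hence nonincreasing on $[0,1/2]$, and compares $g$ with its value at the zero $x_1$ of $B_2$ on each of $(0,x_1)$ (where $B_2>0$) and $(x_1,1/2)$ (where $B_2<0$), so that everything collapses to $\int_0^1 B_2=0$. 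The trade-offs: the paper's argument is entirely elementary and applies verbatim to any finite convex $f$ with no smoothness or approximation needed; yours is airtight for $f\in C^2$ (which in fact covers the paper's only use of the lemma, namely $f(x)=(\gamma+k+x)^{\alpha-3}$ inside the proof of Lemma~\ref{lem:variance}), is arguably more conceptual, and generalizes to any weight with vanishing zeroth and first moments whose double primitive is nonnegative, but it must pay a technical toll for merely convex $f$. That last step is where your sketch is thinnest: mollification does not directly apply on the closed interval, because a convex function on $[0,1]$ can have infinite one-sided derivatives at the endpoints (e.g. $-\sqrt{x}$) and so need not extend convexly beyond $[0,1]$; you need either a dilation $f(\delta+(1-2\delta)\,\cdot\,)$ before smoothing, or, in the measure formulation, integration by parts on $[\varepsilon,1-\varepsilon]$ with boundary terms killed by $G(\varepsilon)=O(\varepsilon^2)$, $G'(\varepsilon)=O(\varepsilon)$ together with the bounds $|f|\le C$ and $\varepsilon|f'_{\pm}(\varepsilon)|\le C$ valid for finite convex functions. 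These are standard repairs, and you correctly flag the endpoint behaviour as the only delicate point, so the proposal stands.
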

\begin{proof}
Let $x_1<x_2$ be the two zero points of $B_2(x)$.
Then 
\begin{align}
    \int_0^1 B_2(x)f(x)dx&=\int_0^{x_1}B_2(x)f(x)dx+\int_{x_1}^{x_2}B_2(x)f(x)dx+\int_{x_2}^1B_2(x)f(x)dx\\
    &=\int_0^{x_1}B_2(x)[f(x)+f(1-x)]dx+\int_{x_1}^{1/2}B_2(x)[f(x)+f(1-x)]dx\\
    &\geq \int_0^{x_1}B_2(x)[f(x_1)+f(x_2)]dx+\int_{x_1}^{1/2}B_2(x)[f(x_1)+f(x_2)]dx\\
    &=\frac{f(x_1)+f(x_2)}{2}\int_0^1 B_2(x)dx\\
    &=0,
\end{align}where the second line is due to the fact $B_2(x)=B_2(1-x)$; the third line follows from that $f(x)$ is convex, $B_2(\cdot)$ is positive on $(0,x_1)$ and negative on $(x_1,1/2)$; and the last line follows from the that $B_2(\cdot)$ has zero integral over $[0,1]$.
\end{proof} 

\begin{lemma}
\label{lem:variance}
    There exists a constant $\sigma_H>0$ that depends on $\alpha$ (equivalently, $H$) only such that, for each $t\in[0,1]$, $\E n^\alpha \mathcal P_n(t)^2\to\sigma_H^2t$ and $\E n^\alpha \mathcal Q_n(t)^2\to\sigma_H^2t$ as $n\to\infty$.
\end{lemma}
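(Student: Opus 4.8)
The plan is to begin from the exact second-moment identity \eqref{eq:variance0}, multiply it by $n^\alpha$, and pass to the limit term by term. Writing $\lceil ns\rceil-ns-\tfrac12=\tfrac12-\{ns\}$ for $ns\notin\mathbb Z$, the object of study is
\[
\E\, n^\alpha\mathcal P_n(t)^2=2H(2H-1)\int_0^t\bigl(n^\alpha f_n(s)\bigr)\bigl(\tfrac12-\{ns\}\bigr)\,ds.
\]
First I would replace $n^\alpha f_n(s)$ by $R(\{ns\})$ by means of Lemma~\ref{lem:fn}. Since the factor $\tfrac12-\{ns\}$ is bounded by $\tfrac12$ and the approximation error in Lemma~\ref{lem:fn} is at most $\tfrac16(ns)^{\alpha-1}$, the error contribution is bounded by a constant times $n^{\alpha-1}\int_0^t s^{\alpha-1}\,ds=\tfrac{t^\alpha}{\alpha}n^{\alpha-1}$, which tends to $0$ because $\alpha-1=2H-2<0$ and $s^{\alpha-1}$ is integrable near the origin. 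This reduces everything to the principal term $2H(2H-1)\int_0^tR(\{ns\})\bigl(\tfrac12-\{ns\}\bigr)\,ds$.

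The second step is a periodic-averaging (equidistribution) argument. The integrand is a function $g(\{ns\})$ of the fractional part alone, with $g(\gamma):=R(\gamma)\bigl(\tfrac12-\gamma\bigr)$; here $R$ is bounded on $[0,1]$ because the singular $\gamma^{\alpha-1}$ term in \eqref{def:Rgamma} is cancelled by the matching singularity of $\tfrac1\alpha R_2(\gamma)$ as $\gamma\to0$, so $g$ is Riemann integrable. The substitution $u=ns$ and the $1$-periodicity of $g(\{\cdot\})$ give $\int_0^tg(\{ns\})\,ds=\tfrac1n\int_0^{nt}g(\{u\})\,du\to t\int_0^1g(\gamma)\,d\gamma$. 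Consequently $\E\,n^\alpha\mathcal P_n(t)^2\to\sigma_H^2\,t$ with
\[
\sigma_H^2:=2H(2H-1)\int_0^1R(\gamma)\bigl(\tfrac12-\gamma\bigr)\,d\gamma,
\]
a constant that depends on $\alpha$, hence on $H$, only. The $\mathcal Q_n$ assertion then requires no new computation: $\widetilde{\mathcal B}^H(t)=\mathcal B^H(1-t)-\mathcal B^H(1)$ is again a fractional Brownian motion with the same covariance, and the integrand defining $\mathcal Q_n$ is $\{ns\}-\tfrac12$, which differs from that of $\mathcal P_n$ only by a sign; hence $\E\mathcal Q_n(t)^2=\E\mathcal P_n(t)^2$ for every $n$, and the same limit $\sigma_H^2t$ follows.

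The main obstacle is the strict positivity $\sigma_H^2>0$; nonnegativity is automatic, since $\E\,n^\alpha\mathcal P_n(t)^2\ge0$ forces the limit to be $\ge0$. This is precisely where Lemma~\ref{lem:sym_pos} enters. For fixed $\gamma\in(0,1)$ and each integer $k\ge0$ the map $u\mapsto(\gamma+k+u)^{\alpha-2}$ is convex on $[0,1]$ (its second derivative is $(\alpha-2)(\alpha-3)(\gamma+k+u)^{\alpha-4}>0$ since $\alpha\in(0,1)$), so Lemma~\ref{lem:sym_pos} gives $\int_0^1B_2(u)(\gamma+k+u)^{\alpha-2}\,du\ge0$; summing over $k$ yields $R_2(\gamma)\ge0$, which controls the sign of the $R(\gamma)$ expression. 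To pin down strict positivity I would rewrite the defining integral using $\tfrac12-\gamma=-\tfrac12\,\tfrac{d}{d\gamma}B_2(\gamma)$ and integrate by parts, turning $\int_0^1R(\gamma)\bigl(\tfrac12-\gamma\bigr)\,d\gamma$ into a boundary contribution plus $\tfrac12\int_0^1R'(\gamma)B_2(\gamma)\,d\gamma$, and then apply Lemma~\ref{lem:sym_pos} again to the convex part. I expect the delicate point to be the sign bookkeeping: verifying that the boundary terms and the elementary powers in \eqref{def:Rgamma} combine with the manifestly nonnegative $R_2$-contribution to something strictly positive rather than merely nonnegative.

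As an independent confirmation of non-degeneracy one can bypass $R(\gamma)$ altogether by rescaling the Hilbert-space norm directly. Since $\E\mathcal P_n(1)^2=\bigl\|\bm 1_{[0,1]}(\tfrac12-\{n\cdot\})\bigr\|_{\mathcal H}^2$, the change of variables $x=ns$, $y=nu$ together with the scaling by $n^\alpha$ give
\[
\sigma_H^2=H(2H-1)\sum_{m\in\mathbb Z}\int_0^1\int_0^1|m+\xi-\eta|^{2H-2}\bigl(\xi-\tfrac12\bigr)\bigl(\eta-\tfrac12\bigr)\,d\xi\,d\eta ,
\]
the series converging because the inner integral decays like $|m|^{2H-4}$. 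Passing to the fractional-noise spectral density $\propto|\lambda|^{1-2H}$ expresses this as a positive multiple of $\sum_{k\neq0}|b_k|^2|k|^{1-2H}$, where $b_k\propto 1/k$ are the nonzero Fourier coefficients of the sawtooth $\gamma\mapsto\gamma-\tfrac12$; this is a convergent sum of strictly positive terms, so $\sigma_H^2>0$.
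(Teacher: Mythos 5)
Your limit computation is correct and coincides with the paper's own proof: both start from \eqref{eq:variance0}, replace $n^\alpha f_n$ by $R(\{n\cdot\})$ at cost $O(n^{\alpha-1})$ via Lemma~\ref{lem:fn}, and then average the periodic function $R(\gamma)(\tfrac12-\gamma)$ over the sawtooth periods to obtain the limit $\alpha(\alpha+1)\int_0^1R(\gamma)(\tfrac12-\gamma)\,d\gamma\cdot t$; your observation that $\E\,\mathcal Q_n(t)^2=\E\,\mathcal P_n(t)^2$ exactly for every $n$ (a sign flip of a deterministic integrand against an integrator with the same law) is in fact cleaner than the paper's ``proved in the same way'' remark. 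Where you genuinely diverge is strict positivity of $\sigma_H^2$. The paper splits $R$ into its elementary-power part, whose integral against $\tfrac12-\gamma$ is computed in closed form as $\frac{(3-\alpha)(1-\alpha)}{24\alpha(\alpha+1)(\alpha+3)}>0$, plus $\tfrac1\alpha R_2$, whose contribution is shown to be nonnegative because $R_2$ is decreasing (Lemma~\ref{lem:sym_pos} applied to the convex function $x\mapsto(\gamma+x)^{\alpha-3}$, followed by a Chebyshev-type correlation inequality); this is entirely elementary and yields the quantitative bound $\sigma_H^2\ge\frac{(1-\alpha)(3-\alpha)}{24(\alpha+3)}$. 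Your first attempt at positivity (integration by parts against $B_2'$) is left unfinished, as you acknowledge, so the burden falls entirely on your spectral argument, which is a valid alternative route: the representation of $\sigma_H^2$ as the infinite-volume quadratic form of the Riesz kernel $|x-y|^{2H-2}$ against the periodic sawtooth is correct (the $|m|^{2H-4}$ decay you invoke is right, since the zeroth- and first-order Taylor terms are annihilated by the mean-zero factors $\xi-\tfrac12$, $\eta-\tfrac12$, and it justifies the dominated-convergence passage to the full sum), and since the kernel's spectral density $c_H|\lambda|^{1-2H}$ is strictly positive while the sawtooth's nonzero Fourier coefficients are $b_k\propto 1/k\neq0$, the limit is a convergent sum of strictly positive terms. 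The trade-off: the paper's route is self-contained calculus and produces an explicit lower bound on $\sigma_H^2$; yours is conceptually more illuminating (it identifies $\sigma_H^2$ as the energy of the sawtooth weighted by the fractional-noise spectral density, so positivity is no accident), but as written the Parseval identity for the Riesz kernel against the truncated sawtooth and the Fej\'er-kernel concentration onto the lattice frequencies are asserted rather than proved, and would need to be spelled out for a complete argument.
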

\begin{proof}
    We prove the result about $\mathcal P_n$, and the other result can be proved in the same way.
    Let $A(n,t)=n^\alpha f_n(t)
        -R(\{nt\})$.
    By Lemma~\ref{lem:fn}, we see that
    \begin{align}
        &\left|\E n^\alpha \mathcal P_n(t)^2-\alpha(\alpha+1) \int_0^t R(\{ns\})\left(\lceil ns\rceil-ns-\frac{1}{2}\right)ds\right|\\
        &=\alpha(\alpha+1)\left|\int_0^t A(n,s)\left(\lceil ns\rceil-ns-\frac{1}{2}\right)ds\right|\\
        &\leq\frac{\alpha(\alpha+1)}{2}\int_0^t|A(n,s)|ds\\
        &\leq\frac{\alpha+1}{12}n^{\alpha-1}t^\alpha\\
        &\leq \frac{\alpha+1}{12}n^{\alpha-1}.
    \end{align}
    On the other hand,
    \begin{align}
        &\int_0^{\lfloor nt\rfloor/n} R(\{ns\})\left(\lceil ns\rceil-ns-\frac{1}{2}\right)ds\\
        &=\sum_{k=1}^{\lfloor nt\rfloor}\int_{(k-1)/n}^{k/n}R(ns-k+1)\left(k-\frac{1}{2}-ns\right)ds\\
        &=\sum_{k=1}^{\lfloor nt\rfloor}\frac{1}{n}\int_{k-1}^k R(s-(k-1))\left(\frac{1}{2}-(s-(k-1))\right)ds\\
        &=\sum_{k=1}^{\lfloor nt\rfloor}\frac{1}{n}\int_{0}^1 R(\gamma)\left(\frac{1}{2}-\gamma\right)d\gamma\\
        &=\frac{\lfloor nt\rfloor}{n}\int_{0}^1 R(\gamma)\left(\frac{1}{2}-\gamma\right)d\gamma\\
        &\to \int_{0}^1 R(\gamma)\left(\frac{1}{2}-\gamma\right)d\gamma\cdot t,
    \end{align}
    and
    \begin{equation}
        \left|\int_{\lfloor nt\rfloor/n}^{t}R(\{ns\})\left(\lfloor nt\rfloor+\frac{1}{2}-ns\right)ds\right|\leq \frac{1}{2}\int_{\lfloor nt\rfloor/n}^{t}|R(\{ns\})|ds\leq \frac{1}{2n}\int_0^1|R(s)|ds\to0.
    \end{equation}
    It remains to show that 
    \begin{equation}
        \int_0^1 R(\gamma)\left(\frac{1}{2}-\gamma\right)d\gamma>0.
    \end{equation}
    Recall that
    \begin{equation}
        R(\gamma)=-\frac{1}{\alpha(1+\alpha)}\gamma^{\alpha+1}+\frac{1}{2\alpha}\gamma^{\alpha}-\frac{1}{12}\gamma^{\alpha-1}+\frac{1}{\alpha}R_2(\gamma).
    \end{equation}
    One can compute that
    \begin{gather}
        \int_0^1 \left[-\frac{1}{\alpha(1+\alpha)}\gamma^{\alpha+1}+\frac{1}{2\alpha}\gamma^{\alpha}-\frac{1}{12}\gamma^{\alpha-1}\right](\frac{1}{2}-\gamma)d\gamma=\frac{(3-\alpha)(1-\alpha)}{24\alpha(\alpha+1)(\alpha+3)}>0.
    \end{gather}
    It suffices to prove that 
    \begin{equation}
    \label{eq:positivity}
        \int_0^1 R_2(\gamma)\left(\frac{1}{2}-\gamma\right)d\gamma\geq0,
    \end{equation}which holds if $R_2(\cdot)$ is decreasing on $[0,1]$.
    Hence, the problem reduces to proving that
    \begin{equation}
        R_2'(\gamma)=-\frac{\alpha(1-\alpha)(2-\alpha)}{2}\int_0^\infty B_2(\{x\})(\gamma+x)^{\alpha-3}dx\leq0,
    \end{equation}
    which is further equivalent to 
    \begin{equation}
        \int_0^\infty B_2(\{x\})(\gamma+x)^{\alpha-3}dx\geq0.
    \end{equation}
    Since $0<\alpha<1$, $(\gamma+x)^{\alpha-3}$ is a convex function of $x$, the desired positivity \eqref{eq:positivity} follows from Lemma~\ref{lem:sym_pos}.
    Therefore, the result holds with \[\sigma_H^2=\alpha(\alpha+1)\int_{0}^1 R(\gamma)\left(\frac{1}{2}-\gamma\right)ds\geq\frac{(1-\alpha)(3-\alpha)}{24(\alpha+3)}>0.\qedhere\]
\end{proof}
In the sequel, we shall fix $\sigma_H$ as defined in the preceding lemma. With some simplification, it has the following explicit formula
\begin{equation}
\label{def:sigmaH}
\begin{aligned}
    \sigma_H^2 &= \frac{\alpha(\alpha+1)}{4}\int_0^\infty\left(\frac{2x^\alpha}{\alpha}-\frac{2(x+1)^\alpha}{\alpha}+x^{\alpha-1}+(x+1)^{\alpha-1}\right)\left(\{x\}^2-\{x\}+\frac{1}{6}\right)dx\\
    &\quad+\frac{(1-\alpha)(3-\alpha)}{24(\alpha+3)}.
\end{aligned}
\end{equation}
    \subsubsection{Weak convergence of the normalized integrands}
    The main result in this subsection is the following.
    \begin{lemma}
\label{lem:weak_convergence_one}
    As $n\to\infty$, $(n^{\alpha/2}\mathcal P_n(t),\mathcal B^H(t),n^{\alpha/2}\mathcal Q_n(t),\tilde{\mathcal B}^H(t))\to(\sigma_H W(t), \mathcal B^H(t),\sigma_H\tilde W(t), \tilde{\mathcal B}^H(t))$ weakly in $C^\theta([0,1])$, where $W(t)$ is an independent Brownian motion, and $\tilde W(t)=W(1-t)-W(1)$.
\end{lemma}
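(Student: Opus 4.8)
The plan is to exploit the fact that, since the integrands are deterministic, the divergence/Stratonovich integrals defining $\mathcal P_n$ and $\mathcal Q_n$ are ordinary Wiener integrals. Writing $g_n(u)=\lceil nu\rceil-nu-\tfrac12$, for any finite collection of times the whole vector $(n^{\alpha/2}\mathcal P_n,\mathcal B^H,n^{\alpha/2}\mathcal Q_n,\tilde{\mathcal B}^H)$ lives in the first Wiener chaos of $\mathcal B^H$ and is therefore jointly Gaussian; the proposed limit is Gaussian too. Hence weak convergence of the finite-dimensional distributions reduces to convergence of the covariance function, and the asserted independence of $W$ from $\mathcal B^H$ amounts to showing that the cross-covariances between the two blocks vanish. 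Together with tightness in $C^\theta$, this yields the claimed weak convergence, the limit being identified through its covariance because Gaussian laws are determined by their second moments and uncorrelated jointly Gaussian vectors are independent.

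First I would establish tightness. For $s<t$, using $|g_n|\le\tfrac12$ and positivity of the kernel $|u-v|^{2H-2}$ one gets the crude bound $\E|\mathcal P_n(t)-\mathcal P_n(s)|^2=\langle \bm 1_{[s,t]}g_n,\bm 1_{[s,t]}g_n\rangle_{\mathcal H}\le \tfrac14(t-s)^{2H}$, which for $t-s\le 1/n$ gives $n^\alpha\E|\mathcal P_n(t)-\mathcal P_n(s)|^2\le\tfrac14(t-s)$ since $\alpha=2H-1$; for $t-s\ge 1/n$ the Euler--Maclaurin analysis behind Lemma~\ref{lem:variance}, run on the interval $[s,t]$ rather than $[0,t]$, gives $n^\alpha\E|\mathcal P_n(t)-\mathcal P_n(s)|^2\le C(t-s)$. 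Thus $\E|n^{\alpha/2}(\mathcal P_n(t)-\mathcal P_n(s))|^2\le C(t-s)$ uniformly in $n$, and Gaussian hypercontractivity upgrades this to $\E|n^{\alpha/2}(\mathcal P_n(t)-\mathcal P_n(s))|^p\le C_p(t-s)^{p/2}$, so Kolmogorov--Chentsov yields tightness in $C^\theta$ for every $\theta<1/2$. The same holds for $\mathcal Q_n$, while $\mathcal B^H$ and $\tilde{\mathcal B}^H$ are $n$-free and tight in $C^\theta$ (recall $\theta<1/2<H$).

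Next I would compute the limiting covariances. The diagonal entries follow from Lemma~\ref{lem:variance}: splitting $\bm 1_{[0,t]}=\bm 1_{[0,s]}+\bm 1_{[s,t]}$ gives $\E[\mathcal P_n(t)\mathcal P_n(s)]=\E[\mathcal P_n(s)^2]+\langle\bm 1_{[s,t]}g_n,\bm 1_{[0,s]}g_n\rangle_{\mathcal H}$, and a scaling analysis localized near the corner $(s,s)$ shows the cross strip is of order $n^{-2H}$, negligible after multiplication by $n^\alpha=n^{2H-1}$, so $n^\alpha\E[\mathcal P_n(t)\mathcal P_n(s)]\to\sigma_H^2(s\wedge t)$; the same holds for $\mathcal Q_n$ by time reversal. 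For the cross term I would write $\E[\mathcal P_n(t)\mathcal Q_n(s)]=-\langle\bm 1_{[0,t]}g_n,\bm 1_{[1-s,1]}g_n\rangle_{\mathcal H}$ and split on overlap of $[0,t]$ and $[1-s,1]$: when $s+t\le 1$ the kernel is non-singular on the product region and the double mean-zero oscillation of $g_n$ forces the normalized integral to $0$, matching $(1-s-t)\wedge 0=0$; when $s+t>1$ the overlap $[1-s,t]$ yields a variance-type term which, again by the Lemma~\ref{lem:variance} machinery, converges to $-\sigma_H^2(s+t-1)=\sigma_H^2((1-s-t)\wedge 0)$, the off-diagonal strips being negligible. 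Since this equals $\sigma_H^2\,\E[W(t)\tilde W(s)]$ for $\tilde W(s)=W(1-s)-W(1)$, the joint limit of $(n^{\alpha/2}\mathcal P_n,n^{\alpha/2}\mathcal Q_n)$ is $\sigma_H(W,\tilde W)$. Finally, for independence from the driving fBm I would show the four block cross-covariances vanish; for example $\E[\mathcal P_n(t)\mathcal B^H(s)]=H(2H-1)\int_0^t g_n(u)\Phi(u)\,du$ with $\Phi(u)=\int_0^s|u-v|^{2H-2}dv$ bounded and piecewise smooth, and the oscillation of $g_n$ against $\Phi$ makes this $o(n^{-\alpha/2})$, so the normalized covariance tends to $0$; the other three follow by time reversal.

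The main obstacle is the covariance analysis, not the soft probabilistic reduction. Because fBm with $H>1/2$ has strongly positively correlated increments, none of the off-diagonal contributions cancel exactly as they do for Brownian motion, so each must be shown to be of strictly smaller order than the diagonal scale $n^{-\alpha}$; this requires re-deploying the delicate Euler--Maclaurin and Bernoulli-polynomial estimates of Lemma~\ref{lem:variance} on arbitrary subintervals and across the overlap region in the $\mathcal P_n$--$\mathcal Q_n$ coupling, as well as the uniform increment bound underlying tightness. Once uncorrelatedness of the two Gaussian blocks in the limit is established, both the independence of $W$ from $\mathcal B^H$ and the identification of $\tilde W=W(1-\cdot)-W(1)$ are automatic from joint Gaussianity.
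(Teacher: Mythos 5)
Your proposal is correct and follows essentially the same route as the paper's proof: reduce, via joint Gaussianity of all four components, to (i) tightness in $C^\theta$ obtained from uniform bounds on $\left\|n^{\alpha/2}\bm 1_{[s,t]}\left(\lceil nu\rceil-nu-\tfrac{1}{2}\right)\right\|_{\mathcal H}^2\leq C(t-s)$ upgraded by Gaussian moment equivalence and Kolmogorov's criterion, and (ii) convergence of the covariance functions, where the diagonal terms come from Lemma~\ref{lem:variance} and all cross terms are shown negligible by the same Euler--Maclaurin/oscillation estimates. The only cosmetic difference is the $\mathcal P_n$--$\mathcal Q_n$ cross-covariance, which the paper dispatches with the identity $\E\, \mathcal P_n(s)\mathcal Q_n(t)=\E\,\mathcal P_n(1-t)\mathcal P_n(s)-\E\,\mathcal P_n(1)\mathcal P_n(s)$ (reducing it to the already-proved Lemma~\ref{flem:covariance}) rather than your direct overlap decomposition.
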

The proof of this lemma consists of two parts: (1) showing convergence of the covariance of the Gaussian processes to the desired limit (Lemmas~\ref{flem:covariance} and \ref{flem:independence}), and (2) establishing tightness of the family of pre-limit processes (Lemmas~\ref{flem:Kolmogorov} and \ref{flem:tight}).
    \begin{lemma}
    \label{flem:covariance}
    For all $s,t\in[0,1]$, as $n\to\infty$, we have
    \begin{equation}
        \E n^\alpha \mathcal P_n(s)\mathcal P_n(t)\to \sigma_H^2(s\wedge t),\quad\E n^\alpha\mathcal Q_n(s)\mathcal Q_n(t)\to \sigma_H^2(s\wedge t).
    \end{equation}
\end{lemma}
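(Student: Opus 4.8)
The plan is to reduce the off-diagonal covariance to the diagonal asymptotics already established in Lemma~\ref{lem:variance}. By symmetry of the covariance it suffices to treat $s\le t$ and to handle $\mathcal P_n$; the statement for $\mathcal Q_n$ follows identically after the time reversal $\tilde{\mathcal B}^H(t)=\mathcal B^H(1-t)-\mathcal B^H(1)$ used in its definition. Writing $\phi_n(u)=\lceil nu\rceil-nu-\tfrac12$ and splitting $\mathcal P_n(t)=\mathcal P_n(s)+\big(\mathcal P_n(t)-\mathcal P_n(s)\big)$, bilinearity of the $\mathcal H$-inner product gives
\[
\E\, n^\alpha \mathcal P_n(s)\mathcal P_n(t)=\E\, n^\alpha \mathcal P_n(s)^2+\E\, n^\alpha \mathcal P_n(s)\big(\mathcal P_n(t)-\mathcal P_n(s)\big).
\]
By Lemma~\ref{lem:variance} the first term converges to $\sigma_H^2 s=\sigma_H^2(s\wedge t)$, so the whole statement reduces to showing that the cross term vanishes as $n\to\infty$.

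For the cross term I would use the same $\mathcal H$-inner product computation as for $\E\mathcal P_n(t)^2$ in Section~\ref{sec:before_norm}, now applied to the divergence integrals of $\bm 1_{[0,s]}\phi_n$ and $\bm 1_{[s,t]}\phi_n$. Since $u\le s\le v$ on the relevant domain, one has $|u-v|=v-u$, and therefore
\[
\E\,\mathcal P_n(s)\big(\mathcal P_n(t)-\mathcal P_n(s)\big)=\frac{\alpha(\alpha+1)}{2}\int_0^s\!\!\int_s^t (v-u)^{\alpha-1}\phi_n(u)\phi_n(v)\,dv\,du=:\frac{\alpha(\alpha+1)}{2}\,J_n.
\]
The structural fact I would exploit is that $\phi_n$ is a mean-zero sawtooth of period $1/n$: its antiderivative $\Phi_n(u)=\int_0^u\phi_n$ vanishes at every grid point $k/n$ and satisfies $\|\Phi_n\|_\infty\le\tfrac{1}{8n}$.

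The main work is a quantitative bound on $J_n$. Integrating by parts in $u$ (with $\Phi_n(0)=0$) yields
\[
\int_0^s (v-u)^{\alpha-1}\phi_n(u)\,du=(v-s)^{\alpha-1}\Phi_n(s)+(\alpha-1)\int_0^s (v-u)^{\alpha-2}\Phi_n(u)\,du,
\]
and combining $\|\Phi_n\|_\infty\le\tfrac{1}{8n}$ with the elementary estimate $\int_0^s (v-u)^{\alpha-2}\,du\le(1-\alpha)^{-1}(v-s)^{\alpha-1}$ (valid since $\alpha-2<-1$ and $v>s$) bounds this inner integral by $\tfrac{1}{4n}(v-s)^{\alpha-1}$, uniformly in $v\in[s,t]$. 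Integrating against $|\phi_n(v)|\le\tfrac12$ over $v\in[s,t]$ then gives $|J_n|\le\tfrac{1}{8\alpha n}(t-s)^\alpha$, so that $n^\alpha|J_n|\le C\,n^{\alpha-1}(t-s)^\alpha\to 0$ because $\alpha\in(0,1)$. This is exactly the vanishing of the cross term, and combined with the first paragraph it proves the claim for $\mathcal P_n$, hence for $\mathcal Q_n$ as well.

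The step I expect to be the main obstacle is this bound on $J_n$: one must make the cancellation of the oscillating factor $\phi_n$ quantitative near the seam $u=v=s$, where the kernel $(v-u)^{\alpha-1}$ is singular. Passing to the antiderivative $\Phi_n$ is what converts the mean-zero oscillation into the explicit gain $n^{-1}$, while the hypothesis $\alpha>0$ (that is, $H>1/2$) is precisely what keeps the resulting singular integrals convergent. This is the reason the off-diagonal contribution is of strictly lower order than the diagonal one and disappears after the $n^\alpha$ normalization, leaving the Brownian covariance $\sigma_H^2(s\wedge t)$.
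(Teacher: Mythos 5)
Your proof is correct, and its opening step---splitting $\mathcal P_n(t)=\mathcal P_n(s)+(\mathcal P_n(t)-\mathcal P_n(s))$, invoking Lemma~\ref{lem:variance} for the diagonal term, and reducing the claim to the vanishing of the cross term $\E\, n^\alpha \mathcal P_n(s)(\mathcal P_n(t)-\mathcal P_n(s))$---is exactly the paper's reduction, as is your remark that the $\mathcal Q_n$ case is verbatim the same. Where you genuinely depart from the paper is in how the cross term is killed. The paper computes the inner integral $g_n(u,s)=n^\alpha\int_0^s(u-r)^{\alpha-1}\phi_n(r)\,dr$ explicitly via the Euler--Maclaurin summation formula, tracks the cancellation of the leading terms in that expansion to conclude $|g_n(u,s)|\lesssim (nu)^{\alpha-1}+(nu-ns)^{\alpha-1}$, and then integrates over $u\in[s,t]$ to get a bound of order $n^{\alpha-1}$. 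You instead integrate by parts against the antiderivative $\Phi_n$ of the sawtooth, using $\Phi_n(k/n)=0$ and $\|\Phi_n\|_\infty\le 1/(8n)$ to obtain the clean bound $|J_n|\le (t-s)^\alpha/(8\alpha n)$, hence $n^\alpha|J_n|=O(n^{\alpha-1})$, the same rate. Your route is shorter, avoids the Euler--Maclaurin bookkeeping entirely, and makes the mechanism transparent: the mean-zero oscillation of $\phi_n$ yields an $n^{-1}$ gain through $\Phi_n$, while $\alpha>0$ keeps the singular kernel integrable; the paper's heavier computation is less self-contained at this point but is consistent with its repeated use of Euler--Maclaurin elsewhere (Lemmas~\ref{lem:fn}, \ref{lem:variance}, and \ref{flem:Kolmogorov}). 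Two cosmetic remarks: your inner bound $\frac{1}{4n}(v-s)^{\alpha-1}$ is not literally uniform in $v$ (it blows up as $v\downarrow s$), but it is integrable in $v$, which is all you use; and the integration by parts is legitimate for each fixed $v>s$ because $\bigl(v-\cdot\bigr)^{\alpha-1}$ is $C^1$ on $[0,s]$ and $\Phi_n$ is Lipschitz, the endpoint $v=s$ being negligible in the outer integral.
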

\begin{proof}
    We only prove the first claim in the result as the second one can be proved in the same way.
    Without loss of generality, we assume $s\leq t$.
    Due to Lemma~\ref{lem:variance}, it suffices to prove that
    \begin{align}
        \E [n^\alpha(\mathcal P_n(t)-\mathcal P_n(s))\mathcal P_n(s)]&=\int_s^t\int_0^s n^\alpha(u-r)^{\alpha-1}\left(\lceil nr\rceil-nr-\frac{1}{2}\right)dr\cdot \left(\lceil nu\rceil-nu-\frac{1}{2}\right)du\\
        &=:\int_s^t g_n(u)\cdot \left(\lceil nu\rceil-nu-\frac{1}{2}\right)du\to0.
    \end{align}
    As before, we invoke the Euler-Maclaurin summation formula to explicitly compute $g_n(u)$ (here $B_2$ and $R_2$ denote corresponding Bernoulli number and remainder):
    \begin{align}
        &g_n(u,s)\\
        &=n^\alpha\int_0^s(u-r)^{\alpha-1}\left(\lceil nr\rceil-nr-\frac{1}{2}\right)dr\\
        &=n^\alpha\int_0^{\lfloor ns\rfloor/n}(u-r)^{\alpha-1}\left(\lceil nr\rceil-nr-\frac{1}{2}\right)dr+n^\alpha\int_{\lfloor ns\rfloor/n}^s(u-r)^{\alpha-1}\left(\lceil nr\rceil-nr-\frac{1}{2}\right)dr\\
        &=n^\alpha\sum_{k=1}^{\lfloor ns\rfloor}\int_{(k-1)/n}^{k/n}(u-r)^{\alpha-1}\left(\lceil nr\rceil-nr-\frac{1}{2}\right)dr+n^\alpha\int_{\lfloor ns\rfloor/n}^s(u-r)^{\alpha-1}\left(\lceil nr\rceil-nr-\frac{1}{2}\right)dr\\
        &=\frac{n^{\alpha+1}}{\alpha+1}\sum_{k=1}^{\lfloor ns\rfloor}\left[\left(u-\frac{k-1}{n}\right)^{\alpha+1}-\left(u-\frac{k}{n}\right)^{\alpha+1}\right]\\
        &\quad+\frac{n^\alpha}{\alpha}\sum_{k=1}^{\lfloor ns\rfloor}\left(k-\frac{1}{2}-nu\right)\left[\left(u-\frac{k-1}{n}\right)^{\alpha}-\left(u-\frac{k}{n}\right)^{\alpha}\right]\\
        &\quad+\frac{n^{\alpha+1}}{\alpha+1}\left[\left(u-\frac{\lfloor ns\rfloor}{n}\right)^{\alpha+1}-\left(u-s\right)^{\alpha+1}\right]\\
        &\quad+\left(\lceil ns\rceil-\frac{1}{2}-nu\right)\cdot\frac{n^\alpha}{\alpha}\left[\left(u-\frac{\lfloor ns\rfloor}{n}\right)^{\alpha}-\left(u-s\right)^{\alpha}\right]\\
        &=\frac{1}{\alpha}\sum_{k=1}^{\lfloor ns\rfloor}\left(nu-k\right)^\alpha-\frac{n^{\alpha+1}}{\alpha(\alpha+1)}(u^{\alpha+1}-(u-s)^{\alpha+1})+\frac{n^\alpha}{2\alpha}(u^\alpha+(u-s)^\alpha)\\
        &\quad-\frac{n^\alpha}{\alpha}(\lceil ns\rceil -ns)(u-s)^\alpha\\
        &=\frac{1}{\alpha(\alpha+1)}\left[(nu)^{\alpha+1}-(nu-\lfloor ns\rfloor)^{\alpha+1}\right]+\frac{1}{2\alpha}\left[(nu-\lfloor ns\rfloor)^\alpha-(nu)^\alpha\right]\\
        &\quad-\frac{n^{\alpha+1}}{\alpha(\alpha+1)}(u^{\alpha+1}-(u-s)^{\alpha+1})+\frac{n^\alpha}{2\alpha}(u^\alpha+(u-s)^\alpha)\\
        &\quad+\frac{B_2}{2}\left[(nu)^{\alpha-1}-(nu-\lfloor ns\rfloor)^{\alpha-1}\right]+R_2/\alpha-\frac{n^\alpha}{\alpha}(\lceil ns\rceil -ns)(u-s)^\alpha\\
        &=\frac{1}{\alpha(\alpha+1)}\left[(nu-ns)^{\alpha+1}-(nu-\lfloor ns\rfloor)^{\alpha+1}\right]+\frac{1}{2\alpha}\left[(nu-\lfloor ns\rfloor)^\alpha+(nu-ns)^\alpha\right]\\
        &\quad+\frac{B_2}{2}\left[(nu)^{\alpha-1}-(nu-\lfloor ns\rfloor)^{\alpha-1}\right]+R_2/\alpha-\frac{n^\alpha}{\alpha}(\lceil ns\rceil -ns)(u-s)^\alpha\\
        &=-\frac{1}{\alpha}(nu-ns)^\alpha(ns-\lfloor ns\rfloor)+\frac{1}{\alpha}(nu-ns)^\alpha-\frac{n^\alpha}{\alpha}(\lceil ns\rceil -ns)(u-s)^\alpha\\
        &\quad+\frac{B_2}{2}\left[(nu)^{\alpha-1}-(nu-\lfloor ns\rfloor)^{\alpha-1}\right]+R_2/\alpha+O((nu-ns)^{\alpha-1})\\
        &=\frac{B_2}{2}\left[(nu)^{\alpha-1}-(nu-\lfloor ns\rfloor)^{\alpha-1}\right]+R_2/\alpha+O((nu-ns)^{\alpha-1}).
    \end{align}
    Since the remainder $R_2$ is bounded (up to a constant) by  $\left|(nu)^{\alpha-1}-(nu-\lfloor ns\rfloor)^{\alpha-1}\right|$, $|g_n(u,s)|\lesssim (nu)^{\alpha-1}+(nu-ns)^{\alpha-1}$.
    The desired result follows by explicit integration.
\end{proof}

\begin{lemma}
\label{flem:independence}
    For all $s,t\in[0,1]$, as $n\to\infty$, we have
    \begin{equation}
        \E n^\alpha\mathcal P_n(s)\mathcal B^H(t)\to 0.
    \end{equation}
\end{lemma}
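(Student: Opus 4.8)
The plan is to compute the cross-covariance exactly, as an inner product in $\mathcal H$, and then exploit the fine oscillation of the sawtooth weight $\phi_n(r):=\lceil nr\rceil-nr-\tfrac12$. Since the integrand defining $\mathcal P_n(s)$ is deterministic, the Stratonovich integral there coincides with the divergence/Wiener integral, so $\mathcal P_n(s)=B(\bm 1_{[0,s]}\phi_n)$, while $\mathcal B^H(t)=B(\bm 1_{[0,t]})$. By the isometry defining $B$,
\begin{equation}
    \E\,\mathcal P_n(s)\mathcal B^H(t)=\langle \bm 1_{[0,s]}\phi_n,\bm 1_{[0,t]}\rangle_{\mathcal H}=H(2H-1)\int_0^s\int_0^t|r-u|^{2H-2}\phi_n(r)\,du\,dr.
\end{equation}
First I would integrate out the inner, smooth variable $u$. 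Writing $G(r):=\int_0^t|r-u|^{2H-2}\,du$, this is elementary: with $\alpha=2H-1$ one has $G(r)=\alpha^{-1}(r^{\alpha}+(t-r)^{\alpha})$ for $r<t$ and $G(r)=\alpha^{-1}(r^{\alpha}-(r-t)^{\alpha})$ for $r>t$. The expectation therefore collapses to the single oscillatory integral $H(2H-1)\int_0^s G(r)\phi_n(r)\,dr$.

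The structural fact I would use is that $\phi_n$ has period $1/n$ and integrates to zero over each period; hence its antiderivative $\Phi_n(r):=\int_0^r\phi_n(v)\,dv$ vanishes at every node $k/n$ and obeys $0\le\Phi_n(r)\le \tfrac{1}{8n}$ on $[0,1]$ (on a generic period $\Phi_n=\tfrac{1}{2n}\{nr\}(1-\{nr\})$). Integrating by parts and using $\Phi_n(0)=0$,
\begin{equation}
    \int_0^s G(r)\phi_n(r)\,dr=G(s)\Phi_n(s)-\int_0^s G'(r)\Phi_n(r)\,dr .
\end{equation}
The boundary term is $O(1/n)$ since $G$ is bounded on $[0,1]$ and $\Phi_n(s)=O(1/n)$, while the remaining integral is bounded in absolute value by $\tfrac{1}{8n}\int_0^s|G'(r)|\,dr$.

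The only delicate point is integrability of $G'$: for $r<t$ one has $G'(r)=r^{\alpha-1}-(t-r)^{\alpha-1}$, which has singularities at $r=0$ and (when $s\ge t$) at $r=t$; but since $\alpha-1\in(-1,0)$ these are integrable, so $\int_0^s|G'|<\infty$ and the second term is again $O(1/n)$. (Near $0$ one may refine this using $\Phi_n(r)\le r/2$ on the first period, which makes that period contribute only $O(n^{-1-\alpha})$.) Combining the two pieces gives $\E\,\mathcal P_n(s)\mathcal B^H(t)=O(1/n)$, whence
\begin{equation}
    n^{\alpha}\,\E\,\mathcal P_n(s)\mathcal B^H(t)=O(n^{\alpha-1})\to 0,\qquad n\to\infty,
\end{equation}
because $\alpha=2H-1<1$; the same computation, after the time reversal $t\mapsto 1-t$, yields the analogous cross-covariances involving $\mathcal Q_n$ and $\tilde{\mathcal B}^H$ needed later. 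I expect the main obstacle to be purely technical bookkeeping, namely controlling the endpoint singularities of $G'$ while preserving the $O(1/n)$ rate, rather than any conceptual difficulty.
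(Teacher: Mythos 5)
Your proposal is correct and follows essentially the same route as the paper: both express $\E\,\mathcal P_n(s)\mathcal B^H(t)$ via the $\mathcal H$-inner product, integrate out the smooth variable $u$ to reduce to a single oscillatory integral against $\phi_n(r)=\lceil nr\rceil-nr-\tfrac12$, and conclude from an $O(1/n)$ bound that $n^{\alpha}\E\,\mathcal P_n(s)\mathcal B^H(t)=O(n^{\alpha-1})\to 0$. The only difference is that you make explicit, via integration by parts against the antiderivative $\Phi_n$, the oscillation estimate that the paper asserts with a bare ``$\lesssim n^{\alpha-1}$'', which is a welcome amount of added rigor rather than a different method.
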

\begin{proof}
    It suffices to prove that
    \begin{equation}
        n^\alpha\int_0^t\int_0^s|u-r|^{\alpha-1}\left(\lceil nr\rceil-nr-\frac{1}{2}\right)drdu\to0\text{, as }n\to\infty.
    \end{equation}
    If $s\leq t$,
    \begin{align}
        &n^\alpha\int_0^t\int_0^s|u-r|^{\alpha-1}\left(\lceil nr\rceil-nr-\frac{1}{2}\right)drdu\\
        &=n^\alpha\int_0^s\left(\lceil nr\rceil-nr-\frac{1}{2}\right)\int_0^t|u-r|^{\alpha-1}dudr\\
        &=\frac{n^\alpha}{\alpha}\int_0^s\left(\lceil nr\rceil-nr-\frac{1}{2}\right)(r^\alpha+(s-r)^\alpha)dr\lesssim n^{\alpha-1}\to0;
    \end{align}
    if $s>t$,
    \begin{align}
        &n^\alpha\int_0^t\int_0^s|u-r|^{\alpha-1}\left(\lceil nr\rceil-nr-\frac{1}{2}\right)drdu\\
        &=n^\alpha\int_0^s\left(\lceil nr\rceil-nr-\frac{1}{2}\right)\int_0^t|u-r|^{\alpha-1}dudr\\
        &=n^\alpha\int_0^t\left(\lceil nr\rceil-nr-\frac{1}{2}\right)\int_0^t|u-r|^{\alpha-1}dudr\\
        &\quad+n^\alpha\int_t^s\left(\lceil nr\rceil-nr-\frac{1}{2}\right)\int_0^t|u-r|^{\alpha-1}dudr\\
        &=\frac{n^\alpha}{\alpha}\int_0^s\left(\lceil nr\rceil-nr-\frac{1}{2}\right)(r^\alpha+(s-r)^\alpha)dr\\
        &\quad+\frac{n^\alpha}{\alpha}\int_t^s\left(\lceil nr\rceil-nr-\frac{1}{2}\right)(r^\alpha-(r-t)^\alpha)dr\\
        &\lesssim n^{\alpha-1}\to0.\qedhere
    \end{align}
\end{proof}

\begin{lemma}
\label{flem:Kolmogorov}
    For each integer $p\geq1$, there exists a constant $C_{\alpha,p}$ such that, for all $n\geq1$, $0\leq s\leq t\leq1$,
    \begin{equation}
        \E \left|n^{\alpha/2}(\mathcal P_n(t)-\mathcal P_n(s))\right|^{2p}\leq C_{\alpha,p} |t-s|^p.
    \end{equation}
\end{lemma}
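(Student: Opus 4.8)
The plan is to reduce the $2p$-th moment bound to a single uniform-in-$n$ bound on the \emph{variance} of the increment, using the fact that the increment is Gaussian, and then to extract the required linear growth in $t-s$ from the mean-zero oscillation of the sawtooth integrand.

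First I would observe that $\mathcal{P}_n(t)-\mathcal{P}_n(s)=\int_s^t\bigl(\lceil nr\rceil-nr-\tfrac12\bigr)\,d\mathcal{B}^H(r)$ has a \emph{deterministic} integrand, so it is a divergence integral lying in the first Wiener chaos, hence a centered Gaussian random variable. Consequently, writing $X=n^{\alpha/2}(\mathcal{P}_n(t)-\mathcal{P}_n(s))$ and using the Gaussian moment identity $\E|X|^{2p}=(2p-1)!!\,(\E X^2)^p$, the lemma follows once I establish the variance bound $n^\alpha\,\E(\mathcal{P}_n(t)-\mathcal{P}_n(s))^2\le C_\alpha(t-s)$ uniformly in $n\ge1$ and $0\le s\le t\le1$. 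Raising this to the $p$-th power with the constant $(2p-1)!!$ then yields $C_{\alpha,p}=(2p-1)!!\,C_\alpha^{\,p}$.

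Next I would express the variance through the $\mathcal H$ inner product as $n^\alpha\,\E(\mathcal P_n(t)-\mathcal P_n(s))^2=H(2H-1)\,n^\alpha\int_s^t\int_s^t|r-u|^{\alpha-1}\phi_n(r)\phi_n(u)\,dr\,du$, where $\phi_n(r)=\lceil nr\rceil-nr-\tfrac12=\psi(nr)$ and $\psi(x)=\tfrac12-\{x\}$ is the centered sawtooth: it is $1$-periodic, bounded by $1/2$, and has zero mean over every unit interval. Rescaling $x=nr$, $y=nu$ converts this into $H(2H-1)\,n^{-1}\int_a^b\int_a^b|x-y|^{\alpha-1}\psi(x)\psi(y)\,dx\,dy$ with $a=ns$, $b=nt$ and $b-a=n(t-s)$. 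Thus the lemma reduces to the single scale-invariant estimate
$$\Bigl|\int_a^b\int_a^b|x-y|^{\alpha-1}\psi(x)\psi(y)\,dx\,dy\Bigr|\le C_\alpha(b-a),\qquad a<b.$$

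The main obstacle is this last estimate: the naive bound replacing $\psi$ by its sup-norm only gives $(b-a)^{\alpha+1}$, which is useless in the regime $b-a\sim n$. I would exploit cancellation by writing the double integral as $2\int_a^b\psi(x)G(x)\,dx$ with $G(x)=\int_a^x(x-y)^{\alpha-1}\psi(y)\,dy$, and proving the \emph{uniform} bound $|G(x)|\le C_\alpha$. To do so I would split $G(x)$ into a near part $\int_{(x-1)\vee a}^x$, where the integrable singularity is controlled crudely by $\tfrac12\int_0^1 w^{\alpha-1}\,dw=\tfrac1{2\alpha}$, and a far part $\int_a^{x-1}$ (nonempty only when $x-a\ge1$), where $(x-y)^{\alpha-1}$ is smooth so I integrate by parts against the bounded periodic antiderivative $\Psi(y)=\int_a^y\psi$. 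The boundary terms are controlled since $\Psi(a)=0$ and $(x-y)^{\alpha-1}=1$ at $y=x-1$, while the remaining integral $(\alpha-1)\int_a^{x-1}(x-y)^{\alpha-2}\Psi(y)\,dy$ is bounded by $\|\Psi\|_\infty(1-\alpha)\int_a^{x-1}(x-y)^{\alpha-2}\,dy\le\|\Psi\|_\infty$, using $x-a\ge1$ so that $(x-a)^{\alpha-1}\le1$. With $|G|\le C_\alpha$ in hand, $\bigl|\int_a^b\int_a^b\cdots\bigr|\le 2\cdot\tfrac12\,C_\alpha(b-a)$. Feeding this back gives $n^\alpha\,\E(\mathcal P_n(t)-\mathcal P_n(s))^2\le H(2H-1)\,C_\alpha(t-s)$, and the Gaussian moment identity completes the proof. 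The identical argument applies to $\mathcal Q_n$, since its integrand is the analogous centered sawtooth against $\tilde{\mathcal B}^H$.
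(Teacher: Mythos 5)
Your proof is correct, and while it starts from the same reduction as the paper, the way you handle the key estimate is genuinely different. Both arguments reduce the $2p$-th moment to the uniform bound $n^\alpha\,\E(\mathcal P_n(t)-\mathcal P_n(s))^2\le C_\alpha(t-s)$: the paper invokes Proposition 1.5.8 of Nualart for moments of divergence integrals, which for a deterministic integrand is exactly your Gaussian moment identity $\E|X|^{2p}=(2p-1)!!\,(\E X^2)^p$, so that step is equivalent. The divergence is in how the variance bound is proved. The paper computes $f_n(s,u)=\int_s^u(u-r)^{\alpha-1}\bigl(\lceil nr\rceil-nr-\tfrac12\bigr)dr$ explicitly via the Euler--Maclaurin formula and then runs a three-case analysis according to how many grid points $j/n$ lie in $(s,t)$, reusing the expansion machinery (the remainder $R_2$, the decay bound $|f_n(s,u)|\lesssim n^{-\alpha}\bigl((nu-\lceil ns\rceil)^{\alpha-1}+\{nu\}^{\alpha-1}+1\bigr)$) built earlier in the section. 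You instead rescale $x=nr$, $y=nu$ to reduce everything to the single scale-invariant estimate $\bigl|\int_a^b\int_a^b|x-y|^{\alpha-1}\psi(x)\psi(y)\,dx\,dy\bigr|\le C_\alpha(b-a)$, and prove it by splitting $G(x)=\int_a^x(x-y)^{\alpha-1}\psi(y)\,dy$ into a near part (integrable singularity) and a far part handled by integration by parts against the bounded antiderivative $\Psi$ of the mean-zero sawtooth; note your $G(nu)$ is just $n^{\alpha}f_n(s,u)$ in disguise, but you bound it softly rather than computing it. Your route is shorter, avoids Euler--Maclaurin and the case analysis entirely, and is more robust (it works verbatim for any bounded $1$-periodic mean-zero $\psi$, with no dependence on the sawtooth's exact form); what the paper's heavier computation buys is that the same explicit formulas are needed anyway to identify the limiting constant $\sigma_H$ and the covariance structure (Lemmas \ref{lem:fn}, \ref{lem:variance}, \ref{flem:covariance}), so within the paper the case analysis comes at little marginal cost, whereas your argument, taken alone, yields tightness but not the value of the limit.
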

\begin{proof}
    Since the integrand is deterministic, the Stratonovich integral is the same as the divergence integral, i.e., 
    \begin{equation}
        \mathcal P_n(t)-\mathcal P_n(s)=\int_s^t\left(\lceil nu\rceil-nu-\frac{1}{2}\right)\circ dB^H(u) = \delta\left(\mathbf{1}_{[s,t]}\left(\lceil nu\rceil-nu-\frac{1}{2}\right)\right).
    \end{equation}
    By Proposition 1.5.8 in \cite{MR2200233}, we know that, for each integer $p\geq1$, there exists a constant $C_p$ such that
    \begin{equation}
    \label{eq:2p_norm}
        \E[(\mathcal P_n(t)-\mathcal P_n(s))^{2p}]=\E \left[\delta\left(\mathbf{1}_{[s,t]}\left(\lceil nu\rceil-nu-\frac{1}{2}\right)\right)^{2p}\right]\leq C_p\left\|\mathbf{1}_{[s,t]}\left(\lceil nu\rceil-nu-\frac{1}{2}\right)\right\|_{\mathcal H}^{2p}.
    \end{equation}
    We proceed to compute explicitly the norm on the right-hand side. As in Subsection~\ref{sec:before_norm}, more specifically, in \eqref{def:fn}, we define function
    \begin{equation}
        f_n(s,u)=\int_s^u (u-r)^{\alpha-1}\left(\lceil nr\rceil-nr-\frac{1}{2}\right)dr.
    \end{equation}
    In particular, $f_n(0,u)$ coincides with $f_n(u)$ defined in \eqref{def:fn}.
    One can compute the integral explicitly. 
    Namely, for $s,u\not\in\{j/n:j\in\mathbb N\}$ such that there does not exist $j\in\mathbb N$ satisfying $s<j/n<u$,
    \begin{align}
        f_n(s,u)&=\frac{\lceil nu\rceil-nu-1/2}{\alpha}(u-s)^\alpha+\frac{n}{\alpha+1}(u-s)^{\alpha+1}.\label{eq:fn_nondivided}
    \end{align}
    For $s,u\not\in\{j/n:j\in\mathbb N\}$ such that there exists $j\in\mathbb N$ satisfying $s<j/n<u$, by explicit integration and the Euler-Maclaurin summation formula,
    \begin{align}
        f_n(s,u)&=\frac{1}{\alpha}\sum_{k=\lceil ns\rceil}^{\lfloor nu\rfloor }(u-k/n)^\alpha-\frac{n}{\alpha(\alpha+1)}(u-s)^{\alpha+1}+\frac{\lceil ns\rceil-ns-1/2}{\alpha}(u-s)^\alpha\label{eq:fn_divided}\\
        &=\frac{1}{\alpha(\alpha+1)n^\alpha}\left[(nu-\lceil ns\rceil)^{\alpha+1}-(nu-ns)^{\alpha+1}\right]+\frac{\lceil ns\rceil-ns}{\alpha n^\alpha}(nu-ns)^\alpha\\
        &\quad+\frac{1}{2\alpha n^\alpha}[(nu-\lceil ns\rceil)^\alpha-(nu-ns)^\alpha]-\frac{1}{\alpha(\alpha+1)n^\alpha}\{nu\}^{\alpha+1}\\
        &\quad+\frac{1}{2\alpha n^\alpha}\{nu\}^{\alpha}-\frac{1}{12n^\alpha}[\{nu\}^{\alpha-1}-(nu-\lceil ns\rceil)^{\alpha-1}]+\frac{R_2}{\alpha n^\alpha},
    \end{align}where $R_2$ is bounded by $\frac{\alpha}{12}[(nu-\lfloor ns\rfloor)^{\alpha-1}-\{nu\}^{\alpha-1}]$.
    Then, by Taylor's expansion, there exists a constant $C_\alpha$ such that
    \begin{equation}
    \label{eq:fn_far}
        |f_n(s,u)|\leq\frac{C_\alpha^1}{n^\alpha}((nu-\lceil ns\rceil)^{\alpha-1}+\{nu\}^{\alpha-1}+1).
    \end{equation}
    \begin{enumerate}
    \item For $0\leq s<t\leq1$ such that there does not exist $j\in\mathbb N$ satisfying $s<j/n<u$, by \eqref{eq:fn_nondivided},
    we have
    \begin{equation}
    \label{eq:holder-nondivided}
    \begin{aligned}
        &\left|\int_s^tn^\alpha f_n(s,u)\left(\lceil nu\rceil-nu-\frac{1}{2}\right)du\right|\\
        &\leq\frac{n^\alpha}{\alpha}\int_s^t(u-s)^\alpha ds+\frac{n^{1+\alpha}}{\alpha+1}\int_s^t(u-s)^{\alpha+1}ds\\
        &=\frac{n^\alpha}{\alpha(\alpha+1)}(t-s)^{\alpha+1}+\frac{n^{\alpha+1}}{(\alpha+1)(\alpha+2)}(t-s)^{\alpha+2}\\
        &\leq C_\alpha^2(t-s),
    \end{aligned}
    \end{equation}since $|t-s|\leq 1/n$.
    \item For $0\leq s<t\leq1$ such that there exists exactly one $j\in\mathbb N$ satisfying $s<j/n<t$, by \eqref{eq:fn_divided},
    we have
    \begin{align}
        &\left|\int_{j/n}^t n^\alpha f_n(s,u)\left(\lceil nu\rceil-nu-\frac{1}{2}\right)du\right|\\
        &=\left|\int_{j/n}^tn^\alpha\left[\frac{1}{\alpha}(u-j/n)^\alpha-\frac{n}{\alpha(\alpha+1)}(u-s)^{\alpha+1}+\frac{j-ns-1/2}{\alpha}(u-s)^\alpha\right]\left(\lceil nu\rceil-nu-\frac{1}{2}\right)du\right|\\
        &\leq \int_{j/n}^t\left[\frac{1}{\alpha}+\frac{2^{\alpha+1}}{\alpha(\alpha+1)}+\frac{2^\alpha}{\alpha}\right] du\\
        &\leq C_\alpha^3(t-j/n),
    \end{align}since $u-j/n<1/n$ and $u-s<2/n$.
    Therefore, combining with \eqref{eq:holder-nondivided}, we have
    \begin{equation}
    \label{eq:holder_divided}
        \left|\int_s^tn^\alpha f_n(s,u)\left(\lceil nu\rceil-nu-\frac{1}{2}\right)du\right|\leq C_\alpha^4(t-s).
    \end{equation}
    \item For $0\leq s<t\leq1$ such that there exist more than one $j\in\mathbb N$ satisfying $s<j/n<u$, by \eqref{eq:fn_far} and \eqref{eq:holder_divided},
        \begin{align*}
            &\left|\int_s^tn^\alpha f_n(s,u)\left(\lceil nu\rceil-nu-\frac{1}{2}\right)du\right|\\
            &\leq \left|\int_s^{(\lceil ns\rceil+1)/n}n^\alpha f_n(s,u)\left(\lceil nu\rceil-nu-\frac{1}{2}\right)du\right|+\left|\int_{(\lceil ns\rceil+1)/n}^tn^\alpha f_n(s,u)\left(\lceil nu\rceil-nu-\frac{1}{2}\right)du\right|\\
            &\leq C_\alpha^4({(\lceil ns\rceil+1)/n}-s)+C_\alpha^1\int_{(\lceil ns\rceil+1)/n}^t[(nu-\lceil ns\rceil)^{\alpha-1}+\{nu\}^{\alpha-1}+1]du\\
            &\leq (C_\alpha^1+C_\alpha^4)(t-s)+C_\alpha^1\int_{(\lceil ns\rceil+1)/n}^t[(nu-\lceil ns\rceil)^{\alpha-1}+\{nu\}^{\alpha-1}]du\\
            &\leq (2C_\alpha^1/\alpha+C_\alpha^4)(t-s)+\frac{C_\alpha^1}{\alpha} n^{\alpha-1}[(t-\lceil ns\rceil/n)^{\alpha}-n^{-\alpha}]+\frac{C_\alpha^1}{\alpha}\frac{\lfloor nt\rfloor-\lceil ns\rceil-1}{n}+\frac{\{nt\}^\alpha}{\alpha n}\\
            &\leq C_\alpha^5(t-s),
        \end{align*}since $t-s\geq t-\lceil ns\rceil/n\geq 1/n$.
    \end{enumerate}
    We proved that there exists a constant $C_\alpha$ such that for all $n\geq 1$, $0\leq s\leq t\leq1$,
    \begin{equation}
        \left\|n^{\alpha/2}\mathbf{1}_{[s,t]}\left(\lceil nu\rceil-nu-\frac{1}{2}\right)\right\|_{\mathcal H}^2=\frac{\alpha(\alpha+1)}{2}\left|\int_s^tn^\alpha f_n(s,u)\left(\lceil nu\rceil-nu-\frac{1}{2}\right)du\right|\leq C_\alpha(t-s).
    \end{equation}
    By \eqref{eq:2p_norm}, we now have the desired result
    \begin{equation*}
        \E [(n^{\alpha/2}(\mathcal P_n(t)-\mathcal P_n(s)))^{2p}]\leq C_p \left\|n^{\alpha/2}\mathbf{1}_{[s,t]}\left(\lceil nu\rceil-nu-\frac{1}{2}\right)\right\|_{\mathcal H}^{2p}\leq C_p(C_\alpha)^p(t-s)^p.\qedhere
    \end{equation*}
\end{proof}

\begin{lemma}
\label{flem:tight}
For any $\theta\in(0,1/2)$, the family of the processes $\{n^{\alpha/2}\mathcal P_n,n\geq1\}$ is tight in the space of $C^\theta([0,1])$.
\end{lemma}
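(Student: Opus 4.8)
The plan is to deduce tightness in $C^\theta([0,1])$ directly from the uniform moment estimate already established in Lemma~\ref{flem:Kolmogorov}, by means of the Kolmogorov--Chentsov criterion in its quantitative form (the Garsia--Rodemich--Rumsey inequality). The decisive feature of that estimate is that the exponent on $|t-s|$ equals $p$, which grows with $p$; this is precisely what will let us reach every H\"older exponent strictly below $1/2$.

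First I would fix $\theta<1/2$, choose an intermediate exponent $\theta'\in(\theta,1/2)$, and then pick the integer $p$ large enough that $\theta'<(p-1)/(2p)$, i.e.\ $p>1/(1-2\theta')$; this is possible because $(p-1)/(2p)\to1/2$ as $p\to\infty$. With such $p$ fixed, consider the random functional
\[
J_n=\int_0^1\!\!\int_0^1\frac{\bigl|n^{\alpha/2}(\mathcal P_n(u)-\mathcal P_n(v))\bigr|^{2p}}{|u-v|^{2p\theta'+2}}\,du\,dv .
\]
Taking expectations, Tonelli's theorem together with Lemma~\ref{flem:Kolmogorov} yields
\[
\E J_n\le C_{\alpha,p}\int_0^1\!\!\int_0^1|u-v|^{\,p-2p\theta'-2}\,du\,dv ,
\]
and the double integral is finite precisely because $p-2p\theta'-2>-1$, which is exactly our choice $\theta'<(p-1)/(2p)$. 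The resulting bound is crucially independent of $n$.

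Next I would invoke the Garsia--Rodemich--Rumsey inequality with Young function $\Psi(x)=x^{2p}$ and gauge $\psi(u)=u^{\theta'+1/(2p)}$, which converts the finiteness of $J_n$ into the pathwise bound $[n^{\alpha/2}\mathcal P_n]_{C^{\theta'}}\le C\,J_n^{1/(2p)}$ for the H\"older seminorm; combined with the previous display this gives $\sup_n\E\bigl[[n^{\alpha/2}\mathcal P_n]_{C^{\theta'}}^{2p}\bigr]<\infty$. Since $\mathcal P_n(0)=0$, the supremum norm is controlled by the $C^{\theta'}$ seminorm, so in fact $\sup_n\E\bigl[\|n^{\alpha/2}\mathcal P_n\|_{C^{\theta'}}^{2p}\bigr]<\infty$. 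By Markov's inequality, for every $\varepsilon>0$ there exists $R$ with $\sup_n\Prob\bigl(\|n^{\alpha/2}\mathcal P_n\|_{C^{\theta'}}>R\bigr)<\varepsilon$. Because closed balls of $C^{\theta'}([0,1])$ are relatively compact in $C^{\theta}([0,1])$ whenever $\theta<\theta'$ (Arzel\`a--Ascoli together with the compact embedding $C^{\theta'}\hookrightarrow C^{\theta}$), this exhibits, for each $\varepsilon$, a compact set of $C^\theta([0,1])$ carrying mass at least $1-\varepsilon$ uniformly in $n$, which is exactly tightness of $\{n^{\alpha/2}\mathcal P_n,\,n\geq1\}$ in $C^{\theta}([0,1])$.

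The step that does the real work is Lemma~\ref{flem:Kolmogorov}, already in hand, so no genuine obstacle remains here beyond the bookkeeping of exponents. The one point requiring care is that tightness in $C^{\theta}$ is obtained only after proving uniform moment bounds in the strictly stronger norm $C^{\theta'}$ with $\theta'>\theta$, and then passing through the compact embedding; one must therefore verify that $p$ can be taken large enough to accommodate a H\"older exponent $\theta'$ exceeding the target $\theta$, and it is exactly the growth of the moment exponent with $p$ in Lemma~\ref{flem:Kolmogorov} that makes this possible.
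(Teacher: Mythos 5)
Your proposal is correct and follows essentially the same route as the paper, whose entire proof is to choose $p$ with $(p-1)/(2p)>\theta$ and invoke Kolmogorov's continuity criterion together with Lemma~\ref{flem:Kolmogorov}; you have merely made that citation self-contained via the Garsia--Rodemich--Rumsey inequality and the compact embedding $C^{\theta'}([0,1])\hookrightarrow C^{\theta}([0,1])$. One bookkeeping slip: for $\Psi(x)=x^{2p}$ to pair with your functional $J_n$ (denominator $|u-v|^{2p\theta'+2}$), the gauge must be $\psi(u)=u^{\theta'+1/p}$ rather than $u^{\theta'+1/(2p)}$; with that correction the claimed bound $[n^{\alpha/2}\mathcal P_n]_{C^{\theta'}}\le C\,J_n^{1/(2p)}$ follows as stated.
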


\begin{proof}
    Take $p$ large such that $(p-1)/2p>\theta$ and then apply Kolmogorov's continuity theorem with Lemma~\ref{flem:Kolmogorov}.
\end{proof}

\begin{proof}[Proof of Lemma~\ref{lem:weak_convergence_one}]
    The tightness of the family of the processes $(n^{\alpha/2}\mathcal P_n(t),\mathcal B^H(t),n^{\alpha/2}\mathcal Q_n(t),\tilde{\mathcal B}^H(t))$ is due to Lemma~\ref{flem:tight}, and the processes are Gaussian processes, so it suffices to prove the convergence of covariance matrices to the desired covariance matrix.
    \begin{enumerate}[(1)]
        \item The convergence of $\E n^\alpha \mathcal P_n(s)\mathcal P_n(t)$ and $\E n^\alpha \mathcal Q_n(s)\mathcal Q_n(t)$ are due to Lemma~\ref{flem:covariance};
        \item The convergence of $\E n^{\alpha/2}\mathcal P_n(s)\mathcal B^H(t)$ and $\E n^{\alpha/2}\mathcal Q_n(s)\tilde{\mathcal B}^H(t)$ are due to Lemma~\ref{flem:independence};
        \item The convergence of $\E n^{\alpha/2}\mathcal P_n(s)\tilde B^H(t)$ and $\E n^{\alpha/2}\mathcal Q_n(s){\mathcal B}^H(t)$ can also be easily proved using Lemma~\ref{flem:independence};
        \item Lastly, we prove the convergence of $n^\alpha\E \mathcal P_n(s)\mathcal Q_n(t)$. By Lemma~\ref{flem:covariance}, we have
        \begin{align}
            \E n^\alpha\mathcal  P_n(s)\mathcal Q_n(t)&= -n^\alpha\int_{1-t}^1\int_0^s|u-r|^{\alpha-1}\left(\lceil nr\rceil-nr-\frac{1}{2}\right)\left(\lceil nu\rceil-nu-\frac{1}{2}\right)dudr\\
            &=n^\alpha\int_{0}^{1-t}\int_0^s|u-r|^{\alpha-1}\left(\lceil nr\rceil-nr-\frac{1}{2}\right)\left(\lceil nu\rceil-nu-\frac{1}{2}\right)dudr\\
            &\quad-n^\alpha\int_0^1\int_0^s|u-r|^{\alpha-1}\left(\lceil nr\rceil-nr-\frac{1}{2}\right)\left(\lceil nu\rceil-nu-\frac{1}{2}\right)dudr\\
            &=\E n^\alpha\mathcal P_n(1-t)\mathcal P_n(s)-\E n^\alpha\mathcal P_n(1)\mathcal P_n(s)\\
            &\to \sigma_H^2((1-t)\wedge s-s)=\sigma_H^2(1-t-s)\wedge 0.\qedhere
        \end{align}
        
    \end{enumerate}
\end{proof}
\subsection{Weak convergence of the covariance and correlation: Proof of Theorem~\ref{fthm:cov}}
The proofs in Subsections~\ref{fsec:indep} and \ref{fsec:dep} are similar in spirit to those in Subsections~\ref{ssec:indep} and \ref{ssec:dep}, while the proof of Proposition~\ref{fprop:conj7} in Subsection~\ref{fsec:corr} involving delta method requires additional care due to the additional multiplier $n^{H-\frac{1}{2}}$.
\subsubsection{Covariance: independence case}
\label{fsec:indep}
    Suppose that we have two independent fractional Brownian motions $B_1^H$ and $B_2^H$ with the same Hurst parameter $H>1/2$. Recall that we defined $\alpha = 2H-1$ and 
\begin{align}
    B_2(x)&=x^2-x+\frac{1}{6},\\
    R_2(\gamma)&=\frac{\alpha(1-\alpha)}{2}\int_0^\infty B_2(\{x\})(\gamma+x)^{\alpha-2}dx,\\
    R(\gamma)&=-\frac{1}{\alpha(1+\alpha)}\gamma^{\alpha+1}+\frac{1}{2\alpha}\gamma^{\alpha}-\frac{1}{12}\gamma^{\alpha-1}+\frac{1}{\alpha}R_2(\gamma),\\
    \sigma_H^2&=\alpha(\alpha+1)\int_{0}^1 R(\gamma)(\frac{1}{2}-\gamma)ds>0.
\end{align}
Let us further define
\begin{align}
    P_n(t)&=n^{\alpha/2}\int_0^t\left(\lceil ns\rceil-ns-\frac{1}{2}\right)\circ dB^H_1(s),\\
    Q_n(t)&=-n^{\alpha/2}\int_{1-t}^1\left(\lceil ns\rceil-ns-\frac{1}{2}\right)\circ dB^H_1(s),\\
    R_n(t)&=n^{\alpha/2}\int_0^t\left(\lceil ns\rceil-ns-\frac{1}{2}\right)\circ dB^H_2(s),\\
    S_n(t)&=-n^{\alpha/2}\int_{1-t}^1\left(\lceil ns\rceil-ns-\frac{1}{2}\right)\circ dB_2^H(s).
\end{align}
As in Lemma~\ref{lem:weak_convergence_one}, one can obtain the following result,
\begin{lemma}
\label{lem:weak_convergence_two}
For any $0<\theta<1/2$, as $n\to\infty$,
\begin{equation}
    (P_n(t),Q_n(t),R_n(t),S_n(t),B^H_1(t),\tilde B^H_1(t),B_2^H(t),\tilde B_2^H(t))
\end{equation}
converges weakly in $C^\theta([0,1])$ to 
\begin{equation}
    \left(\sigma_H W_1(t),\sigma_H \tilde W_1(t),\sigma_H W_1(t),\sigma_H \tilde W_1(t),B^H_1(t),\tilde B^H_1(t),B_2^H(t),\tilde B_2^H(t)\right),
\end{equation}
where $W_1,W_2$ are exogenous independent Brownian motions, $\tilde B^H_i(t)=B_i^H(1-t)-B_i^H(1)$, and $\tilde W_i(t)=W_i(1-t)-W_i(1)$, $i=1,2$.
\end{lemma}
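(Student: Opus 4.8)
The plan is to follow the proof of Lemma~\ref{lem:weak_convergence_one} essentially verbatim, exploiting that the entire eight-dimensional process is jointly Gaussian for each fixed $n$. Indeed $P_n,Q_n$ are divergence (Stratonovich) integrals of deterministic kernels against $B_1^H$, $R_n,S_n$ are the analogous integrals against $B_2^H$, and the remaining four coordinates are the fBms and their time reversals; hence each finite-dimensional marginal is Gaussian, and weak convergence of finite-dimensional distributions reduces to convergence of all pairwise covariance functions. Thus it suffices to establish (i) tightness in $C^\theta([0,1])$ and (ii) convergence of the covariance structure to that of the claimed limit, in which $W_1,W_2$ are independent Brownian motions independent of $(B_1^H,B_2^H)$.

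For tightness I would apply Lemma~\ref{flem:Kolmogorov} to each of $P_n,Q_n,R_n,S_n$ separately: the moment bound $\E|n^{\alpha/2}(\mathcal P_n(t)-\mathcal P_n(s))|^{2p}\leq C_{\alpha,p}|t-s|^p$ is insensitive to which of the two (identically distributed) fBms drives the integral, and the reversed-kernel representations used for $Q_n,S_n$ satisfy the same estimate. Combined with the standard Kolmogorov moment bound for fBm increments (valid for any order $\theta<1/2<H$) and the same bound for the reversed processes, Lemma~\ref{flem:tight} yields tightness of each coordinate, hence of the joint family, in $C^\theta([0,1])$.

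For the covariances, the two diagonal blocks are already in hand. The convergences $\E n^\alpha P_n(s)P_n(t)\to\sigma_H^2(s\wedge t)$, $\E n^\alpha Q_n(s)Q_n(t)\to\sigma_H^2(s\wedge t)$, $\E n^\alpha P_n(s)Q_n(t)\to\sigma_H^2((1-s-t)\wedge 0)$, together with the vanishing of $\E n^{\alpha/2}P_n(s)B_1^H(t)$ and its relatives, are precisely Lemmas~\ref{flem:covariance} and~\ref{flem:independence} and the covariance computation inside the proof of Lemma~\ref{lem:weak_convergence_one}, now read off for the driving noise $B_1^H$; the identical statements for $R_n,S_n,B_2^H$ follow by the same argument. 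The only structurally new feature is the off-diagonal block coupling a $B_1^H$-driven quantity with a $B_2^H$-driven one, for instance $\E P_n(s)R_n(t)$, $\E P_n(s)B_2^H(t)$, or $\E Q_n(s)S_n(t)$; each of these is identically $0$ for every $n$, since a deterministic integrand in the first Wiener chaos of $B_1^H$ is orthogonal to the first chaos of the independent noise $B_2^H$. This matches the target structure, in which $W_1\perp W_2$ and both are independent of the fBms.

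Assembling these ingredients, the limiting covariance matrix is exactly that of $(\sigma_H W_1,\sigma_H\tilde W_1,\sigma_H W_2,\sigma_H\tilde W_2,B_1^H,\tilde B_1^H,B_2^H,\tilde B_2^H)$; joint Gaussianity upgrades covariance convergence to convergence of finite-dimensional distributions, and together with tightness this gives the asserted weak convergence in $C^\theta([0,1])$. There is no genuine obstacle here: the delicate Euler--Maclaurin analysis that produced $\sigma_H^2$ has already been carried out for the single-process case, and the cross-family terms, which are the only addition relative to Lemma~\ref{lem:weak_convergence_one}, cost nothing by independence. The sole point demanding care is bookkeeping the reversed-kernel identities so that the estimates proved for $\mathcal P_n$ transfer cleanly to $Q_n,R_n,S_n$.
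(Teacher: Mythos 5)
Your proposal is correct and is essentially the paper's own argument: the paper justifies this lemma with the single remark ``as in Lemma~\ref{lem:weak_convergence_one}'', i.e., joint Gaussianity of the eight-dimensional process reduces the problem to convergence of the covariance functions (Lemmas~\ref{flem:covariance} and~\ref{flem:independence}, plus the $\E P_n(s)Q_n(t)$ computation from the proof of Lemma~\ref{lem:weak_convergence_one}) together with tightness from Lemmas~\ref{flem:Kolmogorov} and~\ref{flem:tight}, the only new ingredient being that all cross-family covariances between $B_1^H$-driven and $B_2^H$-driven coordinates vanish identically by independence, exactly as you argue. Note also that your identification of the limit, with $\sigma_H W_2(t),\sigma_H\tilde W_2(t)$ in the third and fourth coordinates, is the correct reading: the repetition of $W_1$ in those slots in the printed statement is a typo, as confirmed by the declaration that $W_1,W_2$ are independent exogenous Brownian motions and by the appearance of $W_2$ in the $B_2^H$-driven limits in the proof of Lemma~\ref{lem:convergence_integral} and in the zero entries of the conditional covariance matrix $\Sigma$ there.
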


Recall the functionals defined in \eqref{def:functionals}. For simplicity of notation, we define
\begin{gather}
\label{def:X^11}
    X^{i,i}=D(B^H_i),\quad X_{n}^{i,i}=D_n(B^H_i),\quad\text{for }i=1, 2,\\
    X^{1,2}=A(B^H_1,B^H_2),\quad X^{1,2}_n=A_n(B^H_1,B^H_2).
\end{gather}
\begin{lemma}
    \label{lem:convergence_integral}
    Let $U=(X^{1,2},X^{1,1},X^{2,2})^{\mathrm{T}}$ and $U_n=(X^{1,2}_n,X^{1,1}_n,X^{2,2}_n)^{\mathrm{T}}$. If $B^H_1$ and $B^H_2$ are independent, then $n(U_n-U)=V+n^{-\alpha/2}Z_n$ and $(Z_n,B^H_1,B^H_2)\to (Z,B^H_1,B^H_2)$ in distribution, where
    \begin{equation}
    \begin{aligned}
        V&=\begin{pmatrix}
            \frac{1}{2}B^H_1(1)\bar B_2^H+\frac{1}{2}B^H_2(1)\bar B_1^H-\frac{1}{2}B^H_1(1)B^H_2(1)\\
            B^H_1(1)\bar B_1^H-\frac{1}{2}B^H_1(1)^2\\
            B^H_2(1)\bar B_2^H-\frac{1}{2}B^H_2(1)^2
        \end{pmatrix},
    \end{aligned}
    \end{equation}
    \begin{equation}
        Z=\sigma_H
        \begin{pmatrix}
            \int_0^1 [\bar B_2^H-B_2^H(t)]dW_1(t)+\int_0^1 [\bar B_1^H-B_1^H(t)]dW_2(t)\\
            2\int_0^1 [\bar B_1^H-B_1^H(t)]dW_1(t)\\
            2\int_0^1 [\bar B_2^H-B_2^H(t)]dW_2(t)
        \end{pmatrix},
    \end{equation}where $\bar B_i^H=\int_0^1 B_i^H(t)dt$ and $W_i$ are independent Brownian motions.
    Moreover, conditionally on $B_1^H$ and $B_2^H$, $Z\sim\mathcal N(0,\Sigma)$, where
    \begin{equation}
        \Sigma=\sigma_H^2\begin{pmatrix}
            X^{1,1}+X^{2,2}&2X^{1,2}& 2X^{1,2}\\
            2X^{1,2}&4X^{1,1}&0\\
            2X^{1,2}&0& 4X^{2,2}
        \end{pmatrix}.
    \end{equation}
\end{lemma}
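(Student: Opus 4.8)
The plan is to follow closely the structure of the proof of Lemma~\ref{lem:sconj6_indep}, now working with the Stratonovich (Young) integral representations assembled in Subsection~\ref{fsec:representation}. Starting from \eqref{eq:decom} and \eqref{eq:2}, together with the analogous cross-term expansion for $n(X_n^{1,2}-X^{1,2})$, I would write each component of $n(U_n-U)$ as a ``main'' iterated Stratonovich integral whose kernel does not involve the fluctuation integrands, plus fluctuation integrals built from those integrands, plus the remainder term $nD_n^3$ which converges to $0$ in $L^2$ at rate $1/n$. Because $H>1/2$, every one of these integrals is a pathwise Young integral obeying the ordinary rules of calculus, so no It\^o correction terms appear.

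To identify $V$, I would evaluate the main iterated integrals in closed form using the Stratonovich chain rule and Young integration by parts. The key computation is $\int_0^1\int_0^t(1-s-t)\circ dB_i^H(s)\circ dB_i^H(t)=B_i^H(1)\bar B_i^H-\tfrac12 B_i^H(1)^2$, obtained by reducing the inner integral to $(1-2t)B_i^H(t)+\int_0^t B_i^H(s)\,ds$ and integrating by parts; the same manipulation applied to the cross terms yields the first component $\tfrac12 B_1^H(1)\bar B_2^H+\tfrac12 B_2^H(1)\bar B_1^H-\tfrac12 B_1^H(1)B_2^H(1)$. Since these are exact path functionals, the resulting $V$ agrees with the one in Lemma~\ref{lem:sconj6_indep} after replacing $W_i$ by $B_i^H$, even though the underlying calculus differs.

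The fluctuation part is where the normalization enters, and it contains the main obstacle. The fluctuation integrals involve the unnormalized integrands $\int_0^t(\lceil ns\rceil-ns-\tfrac12)\circ dB_i^H$; recalling that $P_n,Q_n,R_n,S_n$ in Lemma~\ref{lem:weak_convergence_two} are precisely these integrands multiplied by $n^{\alpha/2}$, each fluctuation integral acquires a prefactor $n^{-\alpha/2}$, giving $n(U_n-U)=V+n^{-\alpha/2}Z_n$ with $Z_n$ a sum of Young integrals of the form $\int_0^1(\cdots)P_n\circ dB^H$ (against the relevant, possibly time-reversed, fractional Brownian motion) plus the rescaled remainder $n^{\alpha/2}\,nD_n^3$, which vanishes since $\alpha<2$. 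The hard step is to pass the weak limit through these Young integrals; this replaces the appeal to Kurtz--Protter (\cite{Kurtz1996}) used in the Brownian case. Here I would exploit the regularity dividend of $H>1/2$: fixing $\theta\in(1-H,1/2)$ one has $\theta+H>1$, so the Young integral map $(f,g)\mapsto\int_0^1 f\circ dg$ is continuous on $C^\theta\times C^{H'}$ for $H'$ close to $H$. Combining this continuity with the joint weak convergence in $C^\theta$ furnished by Lemma~\ref{lem:weak_convergence_two} and the continuous mapping theorem yields $(Z_n,B_1^H,B_2^H)\to(Z,B_1^H,B_2^H)$ in distribution.

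Finally, I would cast $Z$ into the stated form and read off its conditional law. After the limit, $Z$ is a vector of integrals $\sigma_H\int_0^1(\cdots)\circ dB^H$ whose integrands are linear in the exogenous Brownian motions $W_i$; a further Young integration by parts, legitimate since both $\int W\circ dB^H$ and $\int B^H\,dW$ are well-defined Young integrals under $\theta+H>1$, transfers the integration onto $dW_i$ and leaves integrands that are functionals of $B_i^H$ only, producing exactly the displayed $Z=\sigma_H(\ldots)$. Conditioned on $(B_1^H,B_2^H)$ these integrands are deterministic and $W_1,W_2$ are independent Brownian motions, so $Z$ is conditionally centered Gaussian; the covariance matrix $\Sigma$ then follows from the It\^o isometry, using $\int_0^1(\bar B_i^H-B_i^H(t))^2\,dt=X^{i,i}$, the corresponding cross-integral $=X^{1,2}$, and the independence $W_1\perp W_2$ for the vanishing off-diagonal entries.
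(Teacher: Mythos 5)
Your proposal is correct and follows essentially the same route as the paper's proof: Stratonovich/Young representation of the main terms yielding $V$ exactly as in the Brownian case (but without It\^o corrections), extraction of the $n^{-\alpha/2}$ prefactor via the normalized integrands of Lemma~\ref{lem:weak_convergence_two}, passage to the limit through the Young integral's continuity on $C^\theta\times C^\beta$ with $\theta+\beta>1$ (the paper phrases this via Skorokhod representation rather than the continuous mapping theorem, which is an equivalent device), followed by Young integration by parts to move the integration onto $dW_i$ and the It\^o isometry for $\Sigma$. No gaps.
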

\begin{proof}
As in Subsection~\ref{fsec:representation}, we can represent $n(X_n^{1,1}-X^{1,1})$ and $n(X_n^{2,2}-X^{2,2})$ as Stratonovich integrals. Namely,
\begin{align}
    n(X_n^{1,1}-X^{1,1})&=\int_0^1(B_1^H(t)-tB_1^H(1))\circ dB_1^H(t)\\
    &\quad+2\int_0^1(1-t)P_n(t)\circ dB_1^H(t)-2\int_0^1(1-t) Q_n(t)\circ d\tilde B_1^H(t)+E^{1,1}_n\\
    &=B^H_1(1)\bar B_1^H-\frac{1}{2}B^H_1(1)^2+2\int_0^1[(1-t)P_n(t)+tQ_n(1-t)]\circ dB_1^H(t)+E^{1,1}_n\\
    n(X_n^{2,2}-X^{2,2})&=\int_0^1(B_2^H(t)-tB_2^H(1))\circ dB_2^H(t)\\
    &\quad+2\int_0^1(1-t)R_n(t)\circ dB_2^H(t)-2\int_0^1(1-t) S_n(t)\circ d\tilde B_2^H(t)+E^{2,2}_n\\
    &=B^H_2(1)\bar B_2^H-\frac{1}{2}B^H_2(1)^2+2\int_0^1[(1-t)R_n(t)+tS_n(1-t)]\circ dB_2^H(t)+E^{2,2}_n,
\end{align}
where
\begin{equation}
    E^{i,i}_n=-2\int_0^1\int_0^t\frac{1}{n}(\lceil nt\rceil-nt)(\lceil ns\rceil-ns)\circ dB_i^H(s)\circ dB_i^H(t).
\end{equation}
Similarly, as in Subsection~\ref{fsec:representation}, one can also represent $n(X_n^{1,2}-X^{1,2})$ as Stratonovich integral. Namely,
\begin{align}
    n(X_n^{1,2}-X^{1,2})&=\frac{1}{2}\int_0^1(B_1^H(t)-tB_1^H(1))\circ dB_2^H(t)+\frac{1}{2}\int_0^1(B_2^H(t)-tB_2^H(1))\circ dB_1^H(t)\\
    &\quad+\int_0^1(1-t)P_n(t)\circ dB_2^H(t)-\int_0^1(1-t) Q_n(t)\circ d\tilde B_2^H(t)\\
    &\quad+\int_0^1(1-t)R_n(t)\circ dB_1^H(t)-\int_0^1(1-t) S_n(t)\circ d\tilde B_1^H(t)+E^{1,2}_n\\
    &=\frac{1}{2}B^H_1(1)\bar B_2^H+\frac{1}{2}B^H_2(1)\bar B_1^H-\frac{1}{2}B^H_1(1)B^H_2(1)\\
    &\quad+\int_0^1[(1-t)P_n(t)+tQ_n(1-t)]\circ dB_2^H(t)\\
    &\quad+\int_0^1[(1-t)R_n(t)+tS_n(1-t)]\circ dB_1^H(t)+E^{1,2}_n,
\end{align}where
\begin{equation}
    E^{1,2}_n=\int_0^1\int_0^1\frac{1}{n}(\lceil nt\rceil-nt)(\lceil ns\rceil-ns)\circ dB_1^H(s)\circ dB_2^H(t).
\end{equation}
    Choose $1/2<\beta<H$ and $1-\beta<\theta<1/2$. By Lemma~\ref{lem:weak_convergence_two}, we have that 
    \begin{equation}
    (P_n(t),Q_n(t),R_n(t),S_n(t),B^H_1(t),\tilde B^H_1(t),B_2^H(t),\tilde B_2^H(t))
\end{equation}
converges weakly in $C^\theta([0,1])$ to 
\begin{equation}
    \left(\sigma_H W_1(t),\sigma_H \tilde W_1(t),\sigma_H W_1(t),\sigma_H \tilde W_1(t),B^H_1(t),\tilde B^H_1(t),B_2^H(t),\tilde B_2^H(t)\right).
\end{equation}
Since the Stratonovich integral (Young integral) is continuous from $C^\theta\times C^\beta$ to $C^\beta$, by Skorokhod's representation theorem, we conclude that 
    \begin{align*}
        Z_n&=n^{\alpha/2}
        \begin{pmatrix}
            \int_0^1[(1-t)P_n(t)+tQ_n(1-t)]\circ dB_2^H(t)+\int_0^1[(1-t)R_n(t)+tS_n(1-t)]\circ dB_1^H(t)\\
            2\int_0^1[(1-t)P_n(t)+tQ_n(1-t)]\circ dB_1^H(t)\\
            2\int_0^1[(1-t)R_n(t)+tS_n(1-t)]\circ dB_2^H(t)
        \end{pmatrix}\\
        &\quad+n^{\alpha/2}
        \begin{pmatrix}
            E_n^{1,2}\\
            E_n^{1,1}\\
            E_n^{2,2}
        \end{pmatrix}\\
        &\to\sigma_H
        \begin{pmatrix}
            \int_0^1 [W_1(t)-tW_1(1)]\circ dB_2^H(t)+\int_0^1 [W_2(t)-tW_2(1)]\circ dB_1^H(t)\\
            2\int_0^1 [W_1(t)-tW_1(1)]\circ dB_1^H(t)\\
            2\int_0^1 [W_2(t)-tW_2(1)]\circ dB_2^H(t)
        \end{pmatrix}\\
        &=\sigma_H
        \begin{pmatrix}
            \int_0^1 [\bar B_2^H-B_2^H(t)]dW_1(t)+\int_0^1 [\bar B_1^H-B_1^H(t)]dW_2(t)\\
            2\int_0^1 [\bar B_1^H-B_1^H(t)]dW_1(t)\\
            2\int_0^1 [\bar B_2^H-B_2^H(t)]dW_2(t)
        \end{pmatrix}.\qedhere
    \end{align*}
\end{proof}
\subsubsection{Covariance: general case}
\label{fsec:dep}
    Now we are ready to consider the general case we originally have in Theorem~\ref{fthm:cov}.
    Since we have two fractional Brownian motions $\mathcal B_1^H$ and $\mathcal B_2^H$ that are jointly Gaussian with constant correlation coefficient $r\in(-1,1)$, $\mathcal B_1$ and $(\mathcal B_2^H-r\mathcal B_1^H)/\sqrt{1-r^2}$ are two independent fractional Brownian motions with the same Hurst parameter $H$. Then, Lemma~\ref{lem:convergence_integral} can be applied to $B_1^H=\mathcal B_1^H$ and $B_2^H=(\mathcal B_2^H-r\mathcal B_1^H)/\sqrt{1-r^2}$.
    Recalling the notation in \eqref{def:empirical}, the empirical covariance of $\mathcal B_1^H$ and $\mathcal B_2^H$ can be represented as
    \begin{gather}
    \mathcal X^{i,i}=D(\mathcal B_i^H),\quad \mathcal X_{n}^{i,i}=D_n(\mathcal B_i^H),\quad\text{for }i=1, 2,\\
    \mathcal X^{1,2}=A(\mathcal B_1^H,\mathcal B_2^H),\quad \mathcal X^{1,2}_n=A_n(\mathcal B_1^H,\mathcal B_2^H).
\end{gather}
    Recalling the notation in \eqref{def:X^11}, by linearity of integration and summation, we obtain that
    \begin{equation}
\label{feq:linearity}
    \begin{pmatrix}
        \mathcal X_n^{1,2}-\mathcal X^{1,2}\\
        \mathcal X_n^{1,1}-\mathcal X^{1,1}\\
        \mathcal X_n^{2,2}-\mathcal X^{2,2}
    \end{pmatrix}=\begin{pmatrix}
        \sqrt{1-r^2}&r&0\\
        0&1&0\\
        2r\sqrt{1-r^2}&r^2&1-r^2
    \end{pmatrix}\cdot
    \begin{pmatrix}
        X_n^{1,2}-X^{1,2}\\
        X_n^{1,1}-X^{1,1}\\
        X_n^{2,2}-X^{2,2}
    \end{pmatrix}=:A_r(U_n-U).
\end{equation}

\begin{proposition}
    \label{fprop:conj6}
    Let $\mathcal U=(\mathcal X^{1,2},\mathcal X^{1,1},\mathcal X^{2,2})$ and $\mathcal U_n=(\mathcal X^{1,2}_n,\mathcal X^{1,1}_n,\mathcal X^{2,2}_n)$. If $\mathcal B^H_1$ and $\mathcal B^H_2$ are jointly Gaussian with constant correlation coefficient $r$, then $n(\mathcal U_n-\mathcal U)=\mathcal V+n^{-\alpha/2}\mathcal Z_n$ and $(\mathcal Z_n,\mathcal B^H_1,\mathcal B^H_2)\to (\mathcal Z,\mathcal B^H_1,\mathcal B^H_2)$ in distribution, where
    \begin{equation}
    \begin{aligned}
        \mathcal V&=\begin{pmatrix}
            \frac{1}{2}\mathcal B^H_1(1)\bar {\mathcal B}_2^H+\frac{1}{2}\mathcal B^H_2(1)\bar {\mathcal B}_1^H-\frac{1}{2}\mathcal B^H_1(1)\mathcal B^H_2(1)\\
            \mathcal B^H_1(1)\bar {\mathcal B}_1^H-\frac{1}{2}\mathcal B^H_1(1)^2\\
            \mathcal B^H_2(1)\bar {\mathcal B}_2^H-\frac{1}{2}\mathcal B^H_2(1)^2
        \end{pmatrix},
    \end{aligned}
    \end{equation}
    \begin{equation}
        \mathcal Z=\sigma_H
        \begin{pmatrix}
            \int_0^1 [\bar {\mathcal B}_2^H-\mathcal B_2^H(t)]dW_1(t)+r\int_0^1 [\bar {\mathcal B}_1^H-\mathcal B_1^H(t)]dW_1(t)+\sqrt{1-r^2}\int_0^1 [\bar {\mathcal B}_1^H-\mathcal B_1^H(t)]dW_2(t)\\
            2\int_0^1 [\bar {\mathcal B}_1^H-\mathcal B_1^H(t)]dW_1(t)\\
            2r\int_0^1 [\bar {\mathcal B}_2^H-\mathcal B_2^H(t)]dW_1(t)+2\sqrt{1-r^2}\int_0^1 [\bar {\mathcal B}_2^H-\mathcal B_2^H(t)]dW_2(t)
        \end{pmatrix},
    \end{equation}where $\bar {\mathcal B}_i^H=\int_0^1 \mathcal B_i^H(t)dt$, $i\in\{1,2\}$, and, $W_i$, $i\in\{1,2\}$, are independent standard Brownian motions.
    Moreover, conditionally on $\mathcal B_1^H$ and $\mathcal B_2^H$, $\mathcal Z\sim\mathcal N(0,\Sigma_r)$, where
    \begin{equation}
        \Sigma_r=\sigma_H^2\begin{pmatrix}
            \mathcal X^{1,1}+\mathcal X^{2,2}+2r \mathcal X^{1,2}&2\mathcal X^{1,2}+2r \mathcal X^{1,1}& 2\mathcal X^{1,2}+2r \mathcal X^{2,2}\\
            2\mathcal X^{1,2}+2r \mathcal X^{1,1}&4\mathcal X^{1,1}&4r \mathcal X^{1,2}\\
            2\mathcal X^{1,2}+2r \mathcal X^{2,2}&4r \mathcal X^{1,2}& 4\mathcal X^{2,2}
        \end{pmatrix}.
    \end{equation}
\end{proposition}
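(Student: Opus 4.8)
The plan is to reduce the correlated case to the independent case established in Lemma~\ref{lem:convergence_integral}, in exactly the way Proposition~\ref{sprop:conj6} was deduced from Lemma~\ref{lem:sconj6_indep} in the standard Brownian setting. First I would record the Gaussian decorrelation: since $\mathcal B_1^H$ and $\mathcal B_2^H$ are jointly Gaussian fractional Brownian motions with correlation $r$, the pair $B_1^H := \mathcal B_1^H$ and $B_2^H := (\mathcal B_2^H - r\mathcal B_1^H)/\sqrt{1-r^2}$ consists of two independent fractional Brownian motions with the same Hurst parameter $H$, so that $\mathcal B_1^H = B_1^H$ and $\mathcal B_2^H = r B_1^H + \sqrt{1-r^2}\,B_2^H$. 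Because $D$ and $A$ (and their discrete analogues $D_n$, $A_n$) are, respectively, quadratic and bilinear in the paths, expanding $D(\mathcal B_i^H)$ and $A(\mathcal B_1^H,\mathcal B_2^H)$ in terms of $B_1^H, B_2^H$ produces precisely the fixed matrix $A_r$ of \eqref{feq:linearity}, giving the identity $n(\mathcal U_n - \mathcal U) = A_r\, n(U_n - U)$.

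Next I would invoke Lemma~\ref{lem:convergence_integral}, which furnishes the decomposition $n(U_n - U) = V + n^{-\alpha/2} Z_n$ together with the joint convergence $(Z_n, B_1^H, B_2^H) \to (Z, B_1^H, B_2^H)$ in distribution, both $V$ and $Z$ being explicit. Applying the constant matrix $A_r$ to this decomposition immediately yields $n(\mathcal U_n - \mathcal U) = A_r V + n^{-\alpha/2} A_r Z_n =: \mathcal V + n^{-\alpha/2}\mathcal Z_n$, so that $\mathcal V = A_r V$ and $\mathcal Z_n = A_r Z_n$. Since $\mathcal B_1^H$ and $\mathcal B_2^H$ are themselves fixed continuous (linear) images of $(B_1^H, B_2^H)$, the map $(z, b_1, b_2) \mapsto (A_r z,\, b_1,\, r b_1 + \sqrt{1-r^2}\,b_2)$ is continuous, and the continuous mapping theorem upgrades the convergence from Lemma~\ref{lem:convergence_integral} to $(\mathcal Z_n, \mathcal B_1^H, \mathcal B_2^H) \to (\mathcal Z, \mathcal B_1^H, \mathcal B_2^H)$ with $\mathcal Z = A_r Z$.

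The remaining work is bookkeeping. I would compute $A_r V$ and confirm that, after substituting $B_1^H = \mathcal B_1^H$ and $B_2^H = (\mathcal B_2^H - r\mathcal B_1^H)/\sqrt{1-r^2}$, it collapses to the displayed $\mathcal V$; the third component requires expanding $\mathcal B_2^H(1)\bar{\mathcal B}_2^H - \tfrac12 \mathcal B_2^H(1)^2$ and recognizing the combination $r^2 V_2 + (1-r^2)V_3 + 2r\sqrt{1-r^2}\,V_1$. Likewise I would compute each component of $\mathcal Z = A_r Z$, collecting the exogenous Wiener integrals $\int_0^1 [\bar{\mathcal B}_i^H - \mathcal B_i^H(t)]\, dW_j(t)$ by linearity to recover the three stated expressions. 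For the conditional law, note that given $(\mathcal B_1^H, \mathcal B_2^H)$ (equivalently given $(B_1^H,B_2^H)$) every entry of $\mathcal Z$ is a Wiener integral of a deterministic integrand against the independent exogenous Brownian motions $W_1, W_2$, hence conditionally centered Gaussian; the covariance $\Sigma_r$ then follows from the It\^o isometry, the independence $W_1 \perp W_2$, and the identities $\int_0^1 (\bar{\mathcal B}_i^H - \mathcal B_i^H(t))^2\,dt = \mathcal X^{i,i}$ and $\int_0^1 (\bar{\mathcal B}_1^H - \mathcal B_1^H(t))(\bar{\mathcal B}_2^H - \mathcal B_2^H(t))\,dt = \mathcal X^{1,2}$.

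I do not expect a genuine obstacle here, since all of the analytic difficulty, namely the $n^{\alpha/2}$ normalization, the $L^2$ decay of the integrands, and the weak convergence to $\sigma_H W_i$, has already been absorbed into Lemmas~\ref{lem:weak_convergence_two} and \ref{lem:convergence_integral}. The only points demanding a little care are purely algebraic: ensuring the cross terms in $A_r Z$ combine into exactly the claimed $r$ and $\sqrt{1-r^2}$ weightings, and checking that the It\^o-isometry computation of the off-diagonal entries of $\Sigma_r$, in particular the $4r\mathcal X^{1,2}$ term, correctly accounts for the dependence on $W_1$ that is shared across components.
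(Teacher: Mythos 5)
Your proposal is correct and follows exactly the paper's route: the paper proves Proposition~\ref{fprop:conj6} as a direct consequence of Lemma~\ref{lem:convergence_integral} via the decorrelation $B_2^H=(\mathcal B_2^H-r\mathcal B_1^H)/\sqrt{1-r^2}$, the linear relation $n(\mathcal U_n-\mathcal U)=A_r\,n(U_n-U)$ from \eqref{feq:linearity}, and the It\^o isometry for the conditional covariance, precisely mirroring the proof of Proposition~\ref{sprop:conj6}. Your added remark about the continuous mapping theorem and the algebraic bookkeeping of the $r$ and $\sqrt{1-r^2}$ weightings matches the explicit component computations the paper carries out in the standard Brownian motion case.
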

\begin{proof}
    The result is a direct consequence of Lemma~\ref{lem:convergence_integral}, and
    the proof is essentially the same as that of Proposition~\ref{sprop:conj6}.
\end{proof}

\subsubsection{Empirical correlation}
\label{fsec:corr}
    Let $\mathcal U$, $\mathcal U_n$, $\mathcal V$, and $\mathcal Z$ be defined as in Proposition~\ref{fprop:conj6}. 
Recall that $F(a,b,c)=a/\sqrt{bc}$, $\rho=F(\mathcal U)$, and $\rho_n=F(\mathcal U_n)$.
    \begin{proposition}
    \label{fprop:conj7}
    An $n\to\infty$, $(n^{\alpha/2}[n(\rho_n-\rho)-\nabla F(\mathcal U)\cdot \mathcal V],\mathcal B^H_1,\mathcal B^H_2)\to(\nabla F(\mathcal U)\cdot \mathcal Z,\mathcal B^H_1,\mathcal B^H_2)$ in distribution, and, conditional on $(\mathcal B^H_1,\mathcal B^H_2)$, 
    \begin{equation}
        \nabla F(\mathcal U)\cdot \mathcal Z\sim\mathcal N\left(0,\sigma_H^2\cdot\frac{(\mathcal X^{1,1}\mathcal X^{2,2}-(\mathcal X^{1,2})^2)(\mathcal X^{1,1}+\mathcal X^{2,2}-2r\mathcal X^{1,2})}{(\mathcal X^{1,1}\mathcal X^{2,2})^2}\right).
    \end{equation}
\end{proposition}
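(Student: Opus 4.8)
The plan is to mirror the proof of Proposition~\ref{sprop:conj7}, applying the delta method to $F$ at $\mathcal U$, but carrying the extra scaling $n^{\alpha/2}$ through a genuinely \emph{second-order} control of the Taylor remainder. First I would write, by the mean-value form of Taylor's theorem,
\[
n(\rho_n-\rho)=n\big(F(\mathcal U_n)-F(\mathcal U)\big)=\nabla F(\mathcal U)\cdot n(\mathcal U_n-\mathcal U)+R_n,\qquad R_n:=\big[\nabla F(g(\mathcal U_n,\mathcal U))-\nabla F(\mathcal U)\big]\cdot n(\mathcal U_n-\mathcal U),
\]
where $g(\mathcal U_n,\mathcal U)$ lies on the segment joining $\mathcal U_n$ and $\mathcal U$. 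Substituting the decomposition $n(\mathcal U_n-\mathcal U)=\mathcal V+n^{-\alpha/2}\mathcal Z_n$ from Proposition~\ref{fprop:conj6} and rearranging gives
\[
n^{\alpha/2}\big[n(\rho_n-\rho)-\nabla F(\mathcal U)\cdot\mathcal V\big]=\nabla F(\mathcal U)\cdot\mathcal Z_n+n^{\alpha/2}R_n .
\]
It then suffices to prove that $n^{\alpha/2}R_n\to0$ in probability and to identify the conditional law of the limit of the first term.

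The crux, and the main obstacle, is the remainder. In Proposition~\ref{sprop:conj7} mere uniform continuity of $\nabla F$ sufficed to send $R_n\to0$; here the amplification by $n^{\alpha/2}$ would destroy that estimate, so I would instead exploit that $F(a,b,c)=a/\sqrt{bc}$ is $C^2$ (indeed $C^\infty$) away from the singular set $S=\{bc=0\}$, which $\mathcal U$ avoids almost surely because $\mathcal X^{1,1}=D(\mathcal B_1^H)>0$ and $\mathcal X^{2,2}=D(\mathcal B_2^H)>0$ a.s. On a convex compact $K$ disjoint from $S$, $\nabla F$ is Lipschitz, so $|\nabla F(g(\mathcal U_n,\mathcal U))-\nabla F(\mathcal U)|\le L_K|\mathcal U_n-\mathcal U|$ and hence
\[
n^{\alpha/2}|R_n|\le L_K\, n^{\alpha/2-1}\,\big|n(\mathcal U_n-\mathcal U)\big|^2 .
\]
Since $\alpha=2H-1\in(0,1)$ forces $\alpha/2-1<0$, while $n(\mathcal U_n-\mathcal U)$ is bounded in probability (it converges to $\mathcal V$), the right-hand side tends to $0$. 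The probabilistic bookkeeping confining $\mathcal U_n$, $\mathcal U$, and $g(\mathcal U_n,\mathcal U)$ to such a $K$ runs exactly as in the $\varepsilon/3$ argument of Proposition~\ref{sprop:conj7}, using $\Prob(\mathcal U\in S)=0$, that $n(\mathcal U_n-\mathcal U)$ is bounded in probability, and that $\mathcal U_n-\mathcal U\to0$ in probability; the only change is the replacement of the uniform-continuity estimate by the quadratic Lipschitz bound above.

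With the remainder dispatched, I would conclude by Slutsky together with the continuous mapping theorem: since $\nabla F(\mathcal U)$ is a continuous functional of $(\mathcal B_1^H,\mathcal B_2^H)$ and $(\mathcal Z_n,\mathcal B_1^H,\mathcal B_2^H)\to(\mathcal Z,\mathcal B_1^H,\mathcal B_2^H)$ by Proposition~\ref{fprop:conj6}, the joint convergence $(\nabla F(\mathcal U)\cdot\mathcal Z_n,\mathcal B_1^H,\mathcal B_2^H)\to(\nabla F(\mathcal U)\cdot\mathcal Z,\mathcal B_1^H,\mathcal B_2^H)$ follows, and adding $n^{\alpha/2}R_n\to0$ yields the asserted convergence. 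Finally, conditionally on $(\mathcal B_1^H,\mathcal B_2^H)$ we have $\mathcal Z\sim\mathcal N(0,\Sigma_r)$ from Proposition~\ref{fprop:conj6}, so $\nabla F(\mathcal U)\cdot\mathcal Z$ is conditionally centered Gaussian with variance $\nabla F(\mathcal U)^{*}\Sigma_r\,\nabla F(\mathcal U)$. This is the same matrix sandwich computed in \eqref{eq:verifyf}, now carrying the prefactor $\sigma_H^2$ of $\Sigma_r$ in place of the Brownian $1/12$, and evaluates to $\sigma_H^2(\mathcal X^{1,1}\mathcal X^{2,2}-(\mathcal X^{1,2})^2)(\mathcal X^{1,1}+\mathcal X^{2,2}-2r\mathcal X^{1,2})/(\mathcal X^{1,1}\mathcal X^{2,2})^2$, as claimed.
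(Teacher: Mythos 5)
Your proposal is correct and follows essentially the same route as the paper's proof: a delta-method expansion whose remainder is controlled not by mere uniform continuity (as in Proposition~\ref{sprop:conj7}) but by the local Lipschitz continuity of $\nabla F$ away from the singular set, combined with the fact that $|\mathcal U_n-\mathcal U|$ is of order $1/n$ in probability so that the extra factor $n^{\alpha/2}$ is absorbed, followed by Proposition~\ref{fprop:conj6} and the sandwich computation $\nabla F(\mathcal U)^{*}\Sigma_r\nabla F(\mathcal U)$. The paper's bookkeeping merely distributes the powers of $n$ differently (via the sets $L_\delta$ with scaled Lipschitz constants, rather than your single quadratic bound $L_K\,n^{\alpha/2-1}|n(\mathcal U_n-\mathcal U)|^2$ on a convex compact $K$), so the substance is identical; your version also correctly carries the prefactor $\sigma_H^2$ in the conditional variance.
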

\begin{proof}
    By Taylor's expansion, we obtain that
    \begin{equation}
        \begin{aligned}
            &n^{\alpha/2}[n(\rho_n-\rho)-\nabla F(\mathcal U)\cdot \mathcal V]\\
            &=n^{\alpha/2}[n(F(\mathcal U_n)-F(\mathcal U))-\nabla F(\mathcal U)\cdot \mathcal V]\\
            &=n^{\alpha/2}[\nabla F(\mathcal U)\cdot n(\mathcal U_n-\mathcal U)+[\nabla F(\mathcal U)-\nabla F(g(\mathcal U_n,\mathcal U))]\cdot n(\mathcal U_n-\mathcal U)-\nabla F(\mathcal U)\cdot \mathcal V]\\
            &=\nabla F(\mathcal U)\cdot n^{\alpha/2}[n(\mathcal U_n-\mathcal U)- \mathcal V]+n^{\alpha/2}[\nabla F(\mathcal U)-\nabla F(g(\mathcal U_n,\mathcal U))]\cdot n(\mathcal U_n-\mathcal U),
        \end{aligned}
    \end{equation}where $g(x,y)$ is a point on the line segment connecting $x$ and $y$.
    Due to Proposition~\ref{fprop:conj6}, it suffices to prove the second term on the right-hand side converges to $0$ in probability and verify that 
    \begin{equation}
    \label{eq:verifys}
    \nabla F(\mathcal U)^*\Sigma_r\nabla F(\mathcal U)=\frac{(\mathcal X^{1,1}\mathcal X^{2,2}-(\mathcal X^{1,2})^2)(\mathcal X^{1,1}+\mathcal X^{2,2}-2r\mathcal X^{1,2})}{(\mathcal X^{1,1}\mathcal X^{2,2})^2}.
    \end{equation}
    The verification is a simple matrix multiplication calculation, and the details are omitted here.
    
    Fix $\kappa>0$ and $\varepsilon>0$ arbitrarily small. It follows from the previous calculations that $n(\mathcal U_n-\mathcal U)$ has uniformly bounded $L^2$ norm. Hence, we can find $M>0$ sufficiently large such that for all $n$
    \begin{equation}
    \label{feq:e1}
        \Prob(n|\mathcal U_n-\mathcal U|>M)<\varepsilon/3.
    \end{equation}
    Note that $\nabla^2 F$ is continuous outside the set $S:=\{(x,y,z):xyz=0\}$, thus $\nabla F$ is Lipschitz continuous in any compact set that does not intersect with $S$. Let 
    \[L_\delta=\left\{\begin{matrix}
        x\in\mathbb R^3: |x_i|>\delta, i=1,2,3;\\
        \text{for each }y\in\mathbb R^3\text{ s.t. } |x_i-y_i|\leq|x_i|/2, i=1,2,3, |\nabla F(x)-\nabla F(y)|\leq|x-y|/\delta
    \end{matrix}\right\}.\]
    It is straightforward to see that $\Prob(\mathcal U\in S)=0$, and thus $\displaystyle\lim_{\delta\downarrow0}\Prob(\mathcal U\in L_\delta)=\Prob(\mathcal U\in \lim_{\delta\downarrow0}L_\delta)=\Prob(\mathcal U\in\mathbb R^3\setminus S)=1$, so there exists $\delta>0$ such that \begin{equation}
    \label{feq:e2}
    \Prob(\mathcal U\in L_\delta)>1-\varepsilon/3.
    \end{equation} 
    Again, since $n(\mathcal U_n-\mathcal U)$ has uniformly bounded $L^2$ norm and $0<\alpha<1$, there exists $N>0$ such that $2\kappa/M<N^{\alpha/2}$ (to be used later) and, for all $n>N$,
    \begin{equation}
    \label{feq:e3}
        \Prob(n^\alpha|\mathcal U_n-\mathcal U|>\delta\kappa/M)<\varepsilon/3.
    \end{equation}
    Combining \eqref{feq:e1}, \eqref{feq:e2}, and \eqref{feq:e3}, with $M,\delta,N$ chosen as above, we have for $n>N$
    \begin{align}
        &\Prob(n^{\alpha/2}|[\nabla F(\mathcal U)-\nabla F(g(\mathcal U_n,\mathcal U))]\cdot n(\mathcal U_n-\mathcal U)|\leq\kappa)\\
        &\quad\geq \Prob(n|\mathcal U_n-\mathcal U|\leq M, n^{\alpha/2}|\nabla F(\mathcal U)-\nabla F(g(\mathcal U_n,\mathcal U))|\leq\kappa/M)\\
        &\quad\geq \Prob(n^{\alpha/2}|\nabla F(\mathcal U)-\nabla F(g(\mathcal U_n,\mathcal U))|\leq\kappa/M)-\varepsilon/3\\
        &\quad\geq \Prob(\mathcal U\in L_\delta,|(\mathcal U)_i-(\mathcal U_n)_i|<|\mathcal U|_i/2, i=1,2,3, |\mathcal U-\mathcal U_n|\leq n^{-\alpha/2}\kappa\delta/M)-\varepsilon/3\\
        &\quad\geq \Prob(\mathcal U\in L_\delta,|\mathcal U-\mathcal U_n|\leq n^{-\alpha/2}\kappa\delta/M)-\varepsilon/3\\
        &\quad>1-\varepsilon/3-\varepsilon/3-\varepsilon/3\\
        &\quad =1-\varepsilon,
    \end{align}where the fourth inequality is because, if $\mathcal U\in L_\delta$ and $|\mathcal U-\mathcal U_n|\leq n^{-\alpha/2}k\delta/M$, then for each $i\in\{1,2,3\}$
    \begin{align}
        |(\mathcal U)_i-(\mathcal U_n)_i|\leq |\mathcal U-\mathcal U_n|\leq n^{-\alpha/2}k\delta/M<N^{-\alpha/2}k\delta/M<\delta/2<|(\mathcal U)_i|/2.
    \end{align}
    The result now follows from Proposition~\ref{fprop:conj6}.
\end{proof}

\begin{corollary}
\label{fcor:unconditional}
    Let $\mathcal U$ and $\mathcal V$ be defined as in Proposition~\ref{fprop:conj6}.
    Recalling the definition in \eqref{def:musigma}, we have \begin{gather}
    {\sigma^{r,H}(\mathcal B^H_1,\mathcal B^H_2)}^2 = \sigma_H^2\cdot\frac{(\mathcal X^{1,1}\mathcal X^{2,2}-(\mathcal X^{1,2})^2)(\mathcal X^{1,1}+\mathcal X^{2,2}-2r\mathcal X^{1,2})}{(\mathcal X^{1,1}\mathcal X^{2,2})^2}.
    \end{gather}
    Then \[
    \frac{n^{\alpha/2}(n(\rho_n-\rho)-\nabla F(\mathcal U)\cdot \mathcal V)}{\sigma^{r,H}(\mathcal B^H_1,\mathcal B^H_2)}\to\mathcal N(0,1)
    \] in distribution.
\end{corollary}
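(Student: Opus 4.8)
The plan is to run the same characteristic-function argument used in the proof of Corollary~\ref{scor:unconditional}, now carrying along the extra scaling $n^{\alpha/2}=n^{H-1/2}$ and the constant $\sigma_H$. Set
\[
\xi_n=n^{\alpha/2}\left(n(\rho_n-\rho)-\nabla F(\mathcal U)\cdot\mathcal V\right)=n^{H+\frac12}(\rho_n-\rho)-n^{H-\frac12}\mu(\mathcal B^H_1,\mathcal B^H_2),\qquad \xi=\nabla F(\mathcal U)\cdot\mathcal Z .
\]
Proposition~\ref{fprop:conj7} already supplies the joint convergence $(\xi_n,\mathcal B^H_1,\mathcal B^H_2)\to(\xi,\mathcal B^H_1,\mathcal B^H_2)$ in distribution, and states that, conditionally on the pair of paths, $\xi$ is centred Gaussian with variance equal, by the definition of $\sigma^{r,H}$ in \eqref{def:musigma}, to ${\sigma^{r,H}(\mathcal B^H_1,\mathcal B^H_2)}^2$. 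Thus the corollary amounts to saying that dividing $\xi_n$ by the path-dependent normalizer $\sigma^{r,H}(\mathcal B^H_1,\mathcal B^H_2)$ absorbs the conditional variance and produces a standard normal.

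First I would check that $\sigma^{r,H}(\mathcal B^H_1,\mathcal B^H_2)>0$ almost surely, so that the ratio is well defined. Indeed $\sigma_H>0$ by Lemma~\ref{lem:variance}; the factors $\mathcal X^{1,1}=D(\mathcal B^H_1)$ and $\mathcal X^{2,2}=D(\mathcal B^H_2)$ are strictly positive almost surely since the paths are non-constant; $\mathcal X^{1,1}\mathcal X^{2,2}-(\mathcal X^{1,2})^2>0$ almost surely by strict Cauchy--Schwarz (the two centred paths are linearly independent with probability one when $|r|<1$); and $\mathcal X^{1,1}+\mathcal X^{2,2}-2r\mathcal X^{1,2}\geq \mathcal X^{1,1}+\mathcal X^{2,2}-2|r|\sqrt{\mathcal X^{1,1}\mathcal X^{2,2}}>0$ almost surely. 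Since $D$ and $A$ are continuous functionals on $\mathbf C([0,1])$ in the uniform topology, $\sigma^{r,H}$ is a continuous, deterministic functional of $(\mathcal B^H_1,\mathcal B^H_2)$.

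Next I would pass the joint convergence through the map $(x,f,g)\mapsto x/\sigma^{r,H}(f,g)$, which is continuous wherever $\sigma^{r,H}(f,g)\neq0$; since the limiting law assigns no mass to $\{\sigma^{r,H}=0\}$, the continuous mapping theorem yields $\xi_n/\sigma^{r,H}(\mathcal B^H_1,\mathcal B^H_2)\to \xi/\sigma^{r,H}(\mathcal B^H_1,\mathcal B^H_2)$ in distribution. To identify the limit I would then condition on the paths exactly as in Corollary~\ref{scor:unconditional}; writing $\sigma_\ast:=\sigma^{r,H}(\mathcal B^H_1,\mathcal B^H_2)$,
\begin{align}
\E\exp\left(\frac{it\,\xi}{\sigma_\ast}\right)
&=\E\left[\E\left(\exp\left(\frac{it}{\sigma_\ast}\,\xi\right)\Big|\,\mathcal B^H_1,\mathcal B^H_2\right)\right]\\
&=\E\exp\left(-\frac{\sigma_\ast^2\,(t/\sigma_\ast)^2}{2}\right)=\exp\left(-\frac{t^2}{2}\right),
\end{align}
where the second equality uses that, conditionally on $(\mathcal B^H_1,\mathcal B^H_2)$, $\xi\sim\mathcal N(0,\sigma_\ast^2)$ from Proposition~\ref{fprop:conj7}. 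This gives convergence to $\mathcal N(0,1)$.

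I do not expect a serious obstacle, since the argument is structurally identical to the standard Brownian case. The only points requiring care are the almost-sure positivity of $\sigma^{r,H}$ (which legitimizes both the normalizer and the continuous-mapping step), and the fact that the exogenous limiting Brownian motions $W_1,W_2$ generating the conditional Gaussianity of $\xi$ are independent of the observed paths, so that the inner conditional expectation above is genuinely a Gaussian characteristic function with the stated variance.
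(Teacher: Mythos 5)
Your proof is correct and takes essentially the same approach as the paper: the paper omits this proof, noting it is identical to that of Corollary~\ref{scor:unconditional}, which is exactly the characteristic-function-plus-conditioning argument you carry out (joint convergence from Proposition~\ref{fprop:conj7}, division by the path-dependent normalizer, then the conditional Gaussian characteristic function collapsing to $\exp(-t^2/2)$). Your extra verification that $\sigma^{r,H}(\mathcal B^H_1,\mathcal B^H_2)>0$ almost surely is a detail the paper leaves implicit, not a departure from its method.
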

The proof of this corollary is similar to that of Corollary~\ref{scor:unconditional} and is therefore omitted.
Theorem~\ref{fthm:cov} follows immediately from Propositions~\ref{fprop:conj6} and \ref{fprop:conj7} and Corollary~\ref{fcor:unconditional}.
\section{Numerics}
\label{sec:numerics}
In this section, we present results from numerical simulations to illustrate this paper's key results.
We present both the continuous version and the discrete version of results in the standard Brownian motion case and in the fractional Brownian motion case, with difference choices of true correlation coefficients.
\begin{enumerate}
\item Figure~\ref{fig:1} (Theorem~\ref{sthm:cov}) : Standard Brownian motions with true correlation coefficient $r=0.7$.
With
\begin{equation}
    z_n = \frac{n(\rho_n-\rho)-\mu(\mathcal W_1,\mathcal W_2)}{\sigma^r(\mathcal W_1,\mathcal W_2)},
\end{equation}where the quantities requiring continuous-time observation, $\rho$, $\mu$, and $\sigma^r$, are approximated using discrete-time observation at $100\times n$ frequency.
\begin{figure}[H]
  \centering
  \begin{subfigure}[b]{0.32\textwidth}
    \centering
    \includegraphics[width=\linewidth]{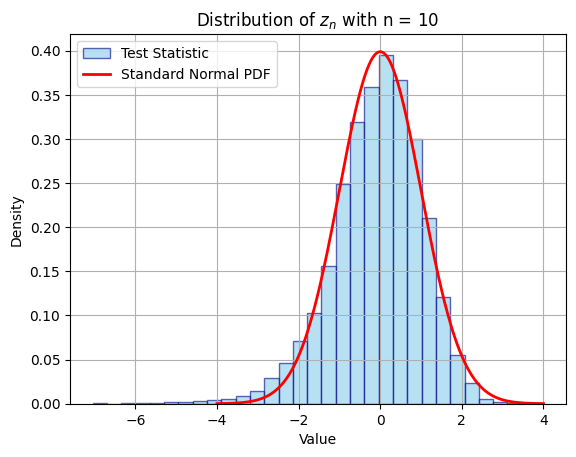}
    \caption{Mean:$-0.075$ Std:$1.081$}
  \end{subfigure}\hfill
  \begin{subfigure}[b]{0.32\textwidth}
    \centering
    \includegraphics[width=\linewidth]{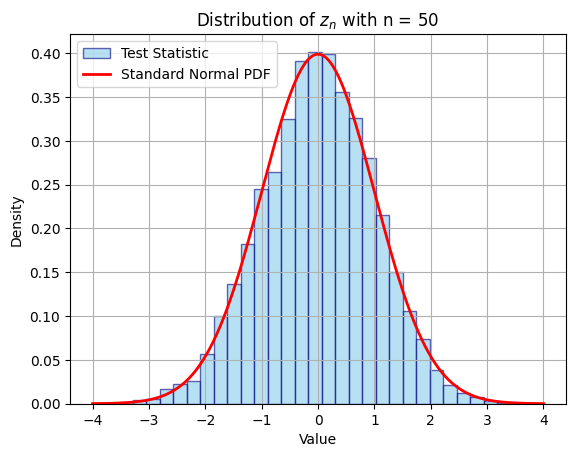}
    \caption{Mean:$-0.005$ Std:$0.999$}
  \end{subfigure}\hfill
  \begin{subfigure}[b]{0.32\textwidth}
    \centering
    \includegraphics[width=\linewidth]{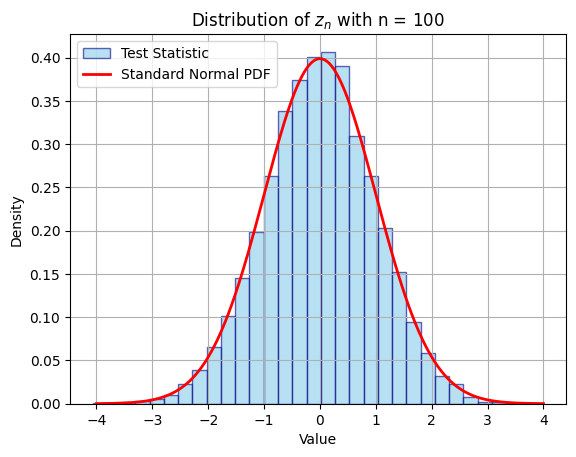}
    \caption{Mean:$-0.003$ Std:$0.982$}
  \end{subfigure}
  \caption{Standard Brownian motion: continuous version}
\label{fig:1}
\end{figure}

\item Figure~\ref{fig:2} (Proposition~\ref{sprop:discrete}) : Standard Brownian motions with true correlation coefficient $r=0.3$.
With
\begin{equation}
    z_n^d =\frac{n(\rho_n-\rho_{2n})-\mu_n(\mathcal W_1,\mathcal W_2)}{\sigma_n^r(\mathcal W_1,\mathcal W_2)}.
\end{equation}
\begin{figure}[H]
  \centering
  \begin{subfigure}[b]{0.32\textwidth}
    \centering
    \includegraphics[width=\linewidth]{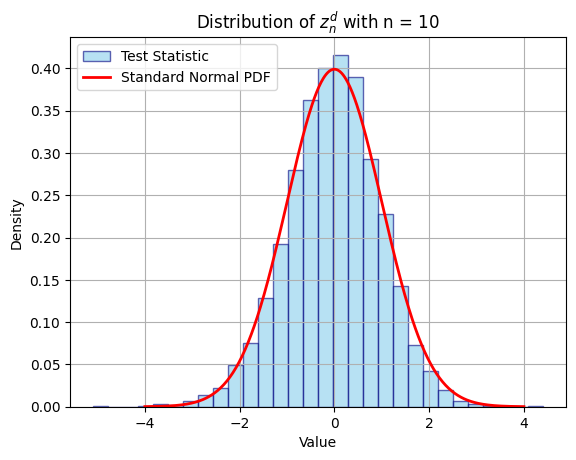}
    \caption{Mean:$-0.021$ Std:$0.979$}
  \end{subfigure}\hfill
  \begin{subfigure}[b]{0.32\textwidth}
    \centering
    \includegraphics[width=\linewidth]{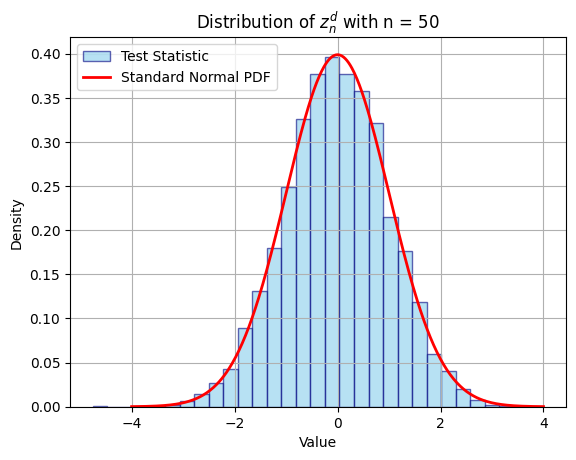}
    \caption{Mean:$-0.014$ Std:$0.995$}
  \end{subfigure}\hfill
  \begin{subfigure}[b]{0.32\textwidth}
    \centering
    \includegraphics[width=\linewidth]{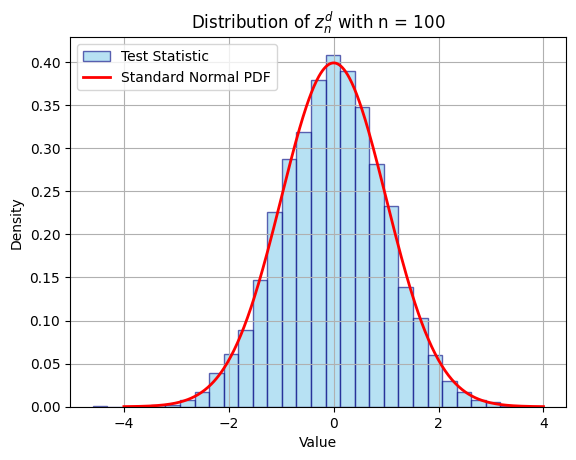}
    \caption{Mean:$-0.011$ Std:$0.991$}
  \end{subfigure}
  \caption{Standard Brownian motion: discrete version}
\label{fig:2}
\end{figure}

\item
Figure~\ref{fig:3} (Theorem~\ref{fthm:cov}) : Fractional Brownian motions with Hurst parameter $H=0.75$ and true correlation coefficient $r=0$.
With
\begin{equation}
    z_n =  \frac{n^{H+\frac{1}{2}}(\rho_n-\rho)-n^{H-\frac{1}{2}}\mu(\mathcal B^H_1,\mathcal B^H_2)}{\sigma^{r,H}(\mathcal B^H_1,\mathcal B^H_2)},
\end{equation}where the quantities requiring continuous-time observation, $\rho$, $\mu$, and $\sigma^{r,H}$, are approximated using discrete-time observation at $100\times n$ frequency.
The value of $\sigma_H$ is calculated using the formula in \eqref{def:sigmaH}, which is approximately $0.025485$ when $H=0.75$.

\begin{figure}[H]
  \centering
  \begin{subfigure}[b]{0.32\textwidth}
    \centering
    \includegraphics[width=\linewidth]{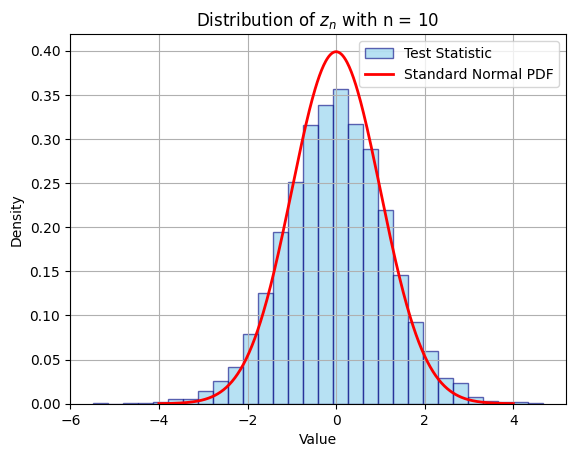}
    \caption{Mean:$-0.002$ Std:$1.145$}
  \end{subfigure}\hfill
  \begin{subfigure}[b]{0.32\textwidth}
    \centering
    \includegraphics[width=\linewidth]{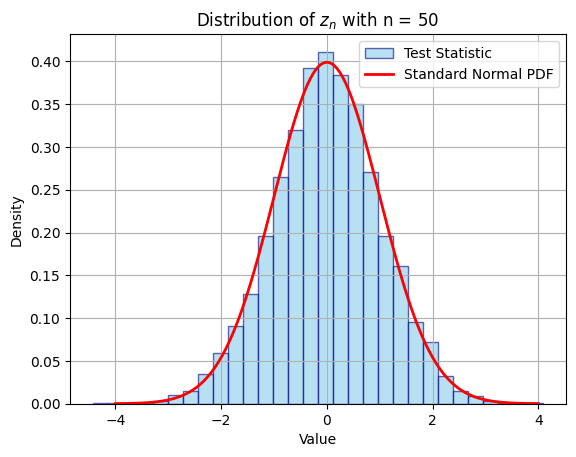}
    \caption{Mean:$0.002$ Std:$1.013$}
  \end{subfigure}\hfill
  \begin{subfigure}[b]{0.32\textwidth}
    \centering
    \includegraphics[width=\linewidth]{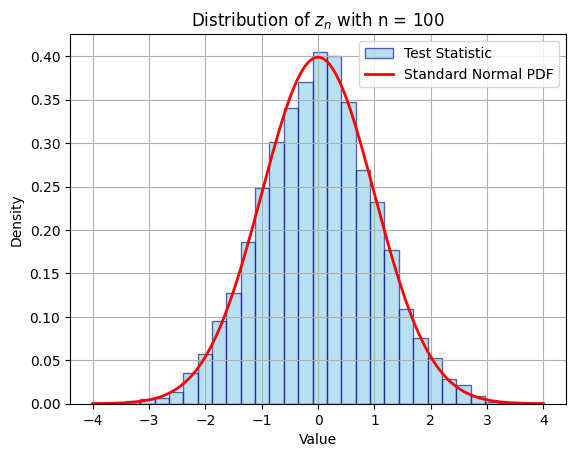}
    \caption{Mean:$-0.002$ Std:$1.010$}
  \end{subfigure}
  \caption{Fractional Brownian motion: continuous version}
  \label{fig:3} 
\end{figure}

\item
Figure~\ref{fig:4} (Proposition~\ref{fprop:discrete}) : Fractional Brownian motions with Hurst parameter $H=0.75$ and true correlation coefficient $r=-0.8$.
With
\begin{equation}
    z_n^d = \frac{n^{H+\frac{1}{2}}(\rho_n-\rho_{2n})-n^{H-\frac{1}{2}}\mu_n(\mathcal B^H_1,\mathcal B^H_2)}{\sigma_n^{r,H}(\mathcal B^H_1,\mathcal B^H_2)},
\end{equation}where the quantities requiring continuous-time observation, $\rho$, $\mu$, and $\sigma^{r,H}$, are approximated using discrete-time observation at $100\times n$ frequency.
The value of $\sigma_H^d$ is calculated using the formula in \eqref{def:sigma_Hd}, which is approximately $0.020980$ when $H=0.75$.
\begin{figure}[H]
  \centering
  \begin{subfigure}[b]{0.32\textwidth}
    \centering
    \includegraphics[width=\linewidth]{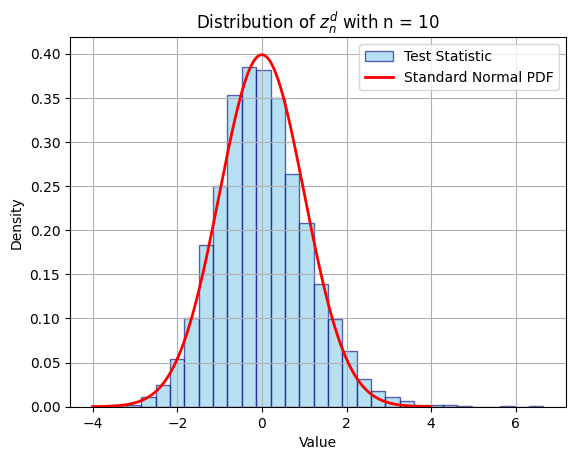}
    \caption{Mean:$0.019$ Std:$1.069$}
  \end{subfigure}\hfill
  \begin{subfigure}[b]{0.32\textwidth}
    \centering
    \includegraphics[width=\linewidth]{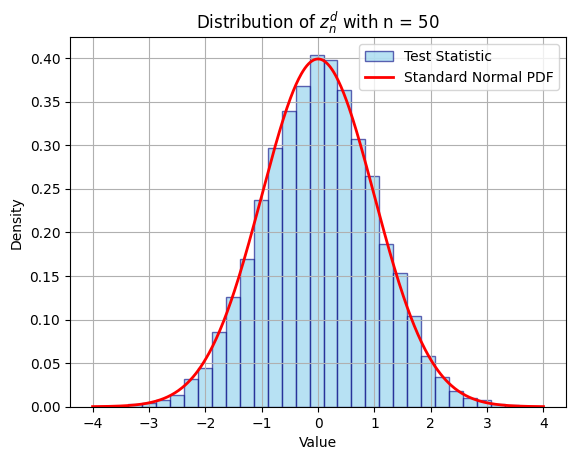}
    \caption{Mean:$0.024$ Std:$0.995$}
  \end{subfigure}\hfill
  \begin{subfigure}[b]{0.32\textwidth}
    \centering
    \includegraphics[width=\linewidth]{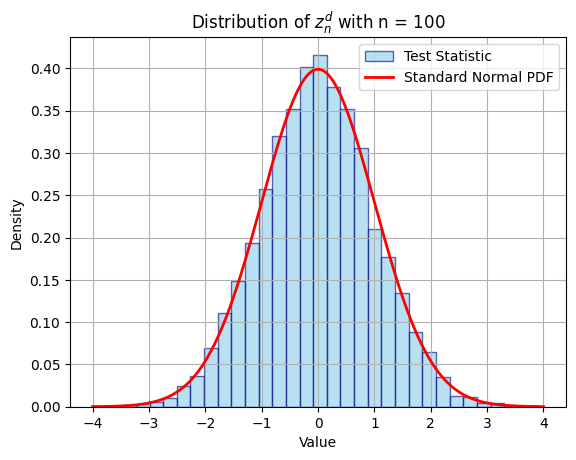}
    \caption{Mean:$-0.006$ Std:$0.994$}
  \end{subfigure}
  \caption{Fractional Brownian motion: discrete version}
\label{fig:4}
\end{figure}

\textbf{Acknowledgments}: We acknowledge with gratitude support from the U.S. Office of Naval Research (ONR) Mathematical Data Science Program (N00014-18-1-2192 and N00014-21-1-2672).
\end{enumerate}
\newpage
\printbibliography
\end{document}